\newtheorem{theorem}{Theorem}[section]
\newtheorem{lemma}[theorem]{Lemma}
\newtheorem{corollary}[theorem]{Corollary}
\newtheorem{proposition}[theorem]{Proposition}
\theoremstyle{definition}
\newtheorem{definition}[theorem]{Definition}
\newtheorem*{example}{Example}
\theoremstyle{remark}
\newtheorem{remark}[theorem]{Remark}
\numberwithin{equation}{section}
\def \l {\lambda}
\def\C{\mathbb C}
\def\A{\mathbb A}
\def\Tr{{\rm Tr}}
\def\Gal{{\rm Gal}}
\def\F{{\mathbb F}}
\def\Z{{\mathbb Z}}
\def\Q{{\mathbb Q}}
\def \R {\mathcal R}
\def\ord{{\mathrm ord}}
\def\({\left(}
\def\){\right)}
\def \l {\lambda}
\def\C{\mathbb{C}}
\def\R{\mathbb{R}}
\def\Z{\mathbb{Z}}
\def\Q{\mathbb{Q}}
\def\F{\mathbb{F}}
\newcommand{\fp}
{\mathbb{F}_p}
\DeclareMathOperator{\Spec}{Spec}
\DeclareMathOperator{\Frob}{Frob}
\DeclareMathOperator{\tr}{tr}
\DeclareMathOperator{\rank}{rank}
\DeclareMathOperator{\geom}{geom}
\DeclareMathOperator{\arith}{arith}
\DeclareMathOperator{\GL}{GL}
\DeclareMathOperator{\SL}{SL}
\DeclareMathOperator{\SU}{SU}
\DeclareMathOperator{\OO}{O}
\DeclareMathOperator{\Sym}{Sym}
\DeclareMathOperator{\Res}{Res}
\DeclareMathOperator{\Swan}{Swan}
\DeclareMathOperator{\Leg}{Leg}
\DeclareMathOperator{\Cl}{Cl}
\DeclareMathOperator{\sep}{sep}
\newcommand{\Mod}[1]{\ (\mathrm{mod}\ #1)}
\begin{document}

\title{Hypergeometric Distributions and joint families of elliptic curves}

\keywords{Gaussian hypergeometric functions, Sato--Tate type distributions, Elliptic curves}
\subjclass[2020]{11G20, 11T24, 33E50}

\address{Department of Mathematics, Texas State University, San Marcos, TX 78666}
\email{bgrove@txstate.edu}

\address{Department of Mathematics, Louisiana State University, Baton Rouge, LA 70803}
\email{hsaad@lsu.edu}

\author{Brian Grove and Hasan Saad}

\begin{abstract}
Recently, the first author as well as the second author with Ono, Pujahari, and Saikia determined the limiting distribution of values of certain finite field ${_2F_1}$ and ${_3F_2}$ hypergeometric functions. These hypergeometric values are related to Frobenius traces of elliptic curves and their limiting distribution is determined using connections to the theory of modular forms and harmonic Maass forms. Here we determine the limiting distribution of values of some ${_4F_3}$ hypergeometric functions which are sums of traces of Frobenius for a pair of elliptic curves. To obtain this result, we generalize Michel's work on Sato--Tate laws for families of elliptic curves to the setting of pairs of families, and we show that a generic pair admits an independent Sato--Tate distribution as the finite field grows. To this end, we use various results from the theory of \'etale cohomology, Deligne's work on the Weil conjectures, and the work of Katz on monodromy groups. In the cases previously studied using modular methods, we elucidate the connection between the modular forms that appear and the machinery of \'etale cohomology.
\end{abstract}

\maketitle

\section{Introduction and Statement of Results}

In the 1980's, Greene \cite{GreenePhD, GreenePaper} defined hypergeometric functions over finite fields as an analogue of the classical hypergeometric functions. Using the analogy between character sum expansions and power series expansions, he proved various special evaluations and transformation formulas that closely mirror their classical counterparts. These functions have played central roles in the study of combinatorial supercongruences \cite{CS1,CS2,CS3,CS4,CS5,CS6}, elliptic curves \cite{EC1,EC2,EC3,EC4,EC5,EC6}, $K3$ surfaces \cite{K31,K32,EC5}, Dwork hypersurfaces \cite{Dwork1,Dwork2,Dwork3}, hyperelliptic curves \cite{hyper1,hyper2}, Calabi--Yau threefolds \cite{CY1,CY2}, the theory of hypergeometric motives and Galois representations \cite{HM1,HM2,HMM1,HMM2,FusEtAl}, and the theory of modular forms \cite{MM1,MM2,MM3,MM4,MM5,MM6, HLLT}.

The limiting distributions of these finite field hypergeometric functions have been studied in multiple works \cite{HGFDistributionOnoSaikia, 3F2Explicit, ono2024distributionhessianvaluesgaussian, grove2024hypergeometricmomentshecketrace}. To make this precise, we first recall the definition of these finite field hypergeometric functions. Let $p$ be a prime, $\zeta_p$ a fixed primitive $p$th root of unity, and let $\omega$ be a generator of the character group of $\F_p^\times.$ Furthermore, let $\alpha=\{a_1,\ldots,a_n\}$ and $\beta=\{1,b_2,\ldots,b_n\}$ be two collections of rational numbers and let $M$ be the least common multiple of the denominators of $a_1,\ldots,a_n,b_1,\ldots,b_n.$\footnote{We say that the hypergeometric datum $\{\alpha,\beta\}$ is of length $n.$} In this notation, if $p\equiv 1\pmod M$ and $\lambda\in\F_p^\times,$ then we have \cite{McCarthyTransformations} the finite field hypergeometric function\footnote{The function $H_p$ is a variant of the ${_{n+1}F_n}$ hypergeometric functions originally defined by Greene.}
\begin{align*}
H_p(\alpha,\beta|\lambda)&:=H_p\left(\begin{array}{cccc}
    a_1 & a_2  & \ldots & a_n  \\
     1 &  b_2 & \ldots &  b_n
\end{array}\bigg|\ \lambda\right) \\
&:=\frac{1}{1-p}\sum\limits_{k=0}^{p-2}\prod\limits_{i=1}^n\frac{g(\omega^{k+(p-1)a_i})g(\omega^{-k-(p-1)b_i})}{g(\omega^{(p-1)a_i})g(\omega^{-(p-1)b_i})}\cdot \omega^k((-1)^n\lambda),
\end{align*}
where
$$
g(\chi) := \sum\limits_{x\in\F_p^\times}\chi(x)\zeta_p^{x}
$$
is the Gauss sum corresponding to $\chi.$ Furthermore, we define $H_p(\alpha,\beta|0):=1.$ In this notation, the aforementioned works determine the limiting distribution of $H_p(\alpha,\beta|\lambda)$ as $p\to\infty$ and $\lambda$ varies over $\F_p$ for certain hypergeometric data $\{\alpha,\beta\}$ of lengths two and three.

Here we determine the limiting behavior of $H_p(\alpha,\beta|\lambda)$ for certain hypergeometric data $\{\alpha,\beta\}$ of length four.

\begin{remark}
For the hypergeometric data we consider, the condition $p\equiv 1\pmod M$ can be weakened to $\gcd(p,M)=1.$ (see Theorem 1.3 of \cite{BCM}). Therefore, since we are concerned with the asymptotic behavior as $p\to\infty,$ in what follows, we have no restrictions on $p.$ 
\end{remark} 

The following theorem gives the limiting moments.
\begin{theorem}\label{theorem1}
Let $d \in \{2,3,4,6\}, \alpha_d=\{\frac{1}{2d},1-\frac{1}{2d},\frac{1}{2d}+\frac{1}{2},-\frac{1}{2d}+\frac{1}{2}\},$ and $\beta=\{1,\frac{1}{2},1,\frac{1}{2}\}.$ If $m\geq 0,$ then we have
$$
\lim\limits_{p\to\infty}{p^{-m/2-1}}\sum\limits_{\lambda\in\F_p} H_{p}(\alpha_d,\beta|\lambda^2)^{m}= \begin{cases}
    C(m_1)C(m_1+1) &\ \ \ \text{ if } m=2m_1\ \text{ is even} \\
    0 &\ \ \ \text{ otherwise,}
\end{cases}
$$
where $C(n):=\frac{(2n)!}{n!(n+1)!}$ is the $n$-th Catalan number.
\end{theorem}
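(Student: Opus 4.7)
The plan is to first show that for each $d \in \{2,3,4,6\}$ the length-four hypergeometric value can be written as a sum
$$
H_p(\alpha_d, \beta|\lambda^2) \;=\; a_p(E_{1,d}(\lambda)) + a_p(E_{2,d}(\lambda))
$$
of Frobenius traces of two one-parameter families of elliptic curves. The shape of $\alpha_d$ --- namely, the palindromic pair $\{\tfrac{1}{2d}, 1-\tfrac{1}{2d}\}$ together with its $\tfrac{1}{2}$-shift $\{\tfrac{1}{2d}+\tfrac{1}{2}, \tfrac{1}{2}-\tfrac{1}{2d}\}$, over the doubled bottom datum $\{1, \tfrac{1}{2}, 1, \tfrac{1}{2}\}$ --- points to a factorization of the underlying product of Gauss sums into two length-two blocks; each such block is, via the Greene--McCarthy correspondence, a ${}_2F_1$ point count attached to an elliptic fiber, and replacing $\lambda$ by $\lambda^2$ should absorb the quadratic character that usually decorates those length-two trace formulas. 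This mirrors the length-two and length-three identifications already used in the modular-forms treatments cited in the introduction.

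With this identification in hand, the next step is to invoke the joint Sato--Tate theorem for pairs of elliptic families that the paper establishes (generalizing Michel). Assuming the pair $(E_{1,d}, E_{2,d})$ is \emph{generic} in the sense of that theorem, every mixed moment satisfies
$$
\lim_{p\to\infty} \frac{1}{p^{(a+b)/2+1}} \sum_{\lambda\in\F_p} a_p(E_{1,d}(\lambda))^{a}\, a_p(E_{2,d}(\lambda))^{b} \;=\; \mu_{ST}(x^a)\,\mu_{ST}(x^b),
$$
where $\mu_{ST}$ is the Sato--Tate measure, whose $2k$-th moment is $C(k)$ and whose odd moments vanish. Expanding $(a_p(E_1)+a_p(E_2))^m$ by the binomial theorem, interchanging the limit with the finite sum, and applying the classical Catalan identity
$$
\sum_{k=0}^{n} \binom{2n}{2k}\,C(k)\,C(n-k) \;=\; C(n)\,C(n+1)
$$
then produces the stated value $C(m_1)C(m_1+1)$ when $m = 2m_1$, while odd $m$ vanish by parity. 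The contribution of the singleton $\lambda = 0$, where $H_p(\alpha_d,\beta|0) = 1$, is absorbed by the $p^{-m/2-1}$ normalization.

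The main obstacle is verifying the genericity of the pair $(E_{1,d}, E_{2,d})$ for each $d \in \{2,3,4,6\}$. Each individual family has geometric monodromy $\SL_2$ by Michel's theorem, provided it is non-isotrivial. By Goursat's lemma, the product monodromy can drop below $\SL_2 \times \SL_2$ only if the two local systems are geometrically isomorphic up to a twist --- concretely, only if $E_{1,d}$ and $E_{2,d}$ are linked by an isogeny or a quadratic twist over $\overline{\F_p(\lambda)}$. Ruling this out reduces to comparing the $j$-invariants of the two families as rational functions of $\lambda$, a finite algebraic check in each of the four cases. The chosen rationals $\tfrac{1}{2d}$ with $d \in \{2,3,4,6\}$ are precisely those attached to CM elliptic families and classical modular parametrizations already studied in the cited prior works; the novelty is the joint behavior, and once pairwise non-isogeny is confirmed, the paper's étale-cohomology and Deligne--Katz machinery delivers the full equidistribution, hence the Catalan moments.
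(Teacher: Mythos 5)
Your overall outline matches the paper's strategy: the identity $H_p(\alpha_d,\beta|\lambda^2) = a_{p,d}(\lambda) + a_{p,d}(-\lambda)$ (the paper's Lemma~\ref{HpLegendreRelation}, proved from the Gauss-sum factorization you gesture at, via results of \cite{splittingRoots}), followed by binomial expansion, the mixed-moment asymptotics for the pair $(\widetilde{E}_{d,\lambda},\widetilde{E}_{d,-\lambda})$, and the Catalan convolution $\sum_{s}\binom{2m_1}{2s}C(s)C(m_1-s)=C(m_1)C(m_1+1)$, which is precisely the paper's Lemma~\ref{CombMom}. So far this is the same argument.

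The genuine gap is in your verification of genericity. You claim that ruling out the obstruction to $G_{\geom}=\SL_2\times\SL_2$ ``reduces to comparing the $j$-invariants of the two families.'' But the condition you must exclude, after the reduction in the paper's Proposition~\ref{IsogenyCondition} via Par\v{s}in's theorem, is that $\widetilde{E}_{d,\lambda}$ is isogenous over $\F_p(\lambda)$ to some \emph{constant quadratic twist} of $\widetilde{E}_{d,-\lambda}$. Distinct $j$-invariants as rational functions of $\lambda$ rule out geometric isomorphism (equivalently, being related by a twist) but do \emph{not} rule out a nontrivial isogeny: isogenous non-isomorphic elliptic curves have different $j$-invariants. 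You would need to show $\Phi_N\bigl(j_1(\lambda), j_2(\lambda)\bigr)\not\equiv 0$ for every modular polynomial $\Phi_N$, which is not a finite check as stated.

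The paper sidesteps this with a reduction-type argument that is both cleaner and actually sufficient: $\widetilde{E}_{d,\lambda}$ has multiplicative reduction at $\lambda=1$ while $\widetilde{E}_{d,-\lambda}$ has good reduction there (and symmetrically at $\lambda=-1$). Since an isogeny over $\F_p(\lambda)$ (or a constant quadratic twist) preserves the reduction type at every place of $\A^1_{\F_p}$, the two bad-fiber loci differ and no such isogeny can exist. Equivalently, at $\lambda=1$ inertia acts on $\mathcal{F}_{1}$ by a nontrivial unipotent (Lemma~\ref{InertiaAction}) but on any $\mathcal{F}_2\otimes\mathcal{L}$ by scalars, so the geometric isomorphism $\mathcal{F}_1\cong_{\geom}\mathcal{F}_2\otimes\mathcal{L}$ needed to defeat the Goursat--Kolchin--Ribet criterion is impossible. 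You should replace your $j$-invariant comparison with this local argument (and check it for each $d\in\{2,3,4,6\}$, using the explicit discriminants of the curves in Table~\ref{TableEC}) to close the gap.
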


\begin{remark}
    The Catalan numbers $C(n)$ are ubiquitous in mathematics (\cite{CatalanBook}). In relation to our results, we have that these Catalan numbers
    arise  (see (5.3) of \cite{SatoTateGroups}) as the even moments of the traces of the Lie group $\SU_2.$ More precisely, if $n\geq 0,$ then we have
    $$
    \int_{\SU_2}\Tr(X)^{2n}dX = C(n),
    $$
    where the integral is taken with respect to the Haar measure. Equivalently, $C(n)$ is (see Lemma~\ref{oneRepMult}) the multiplicity of the trivial representation in the direct sum decomposition of the $(2n)$-th tensor power representation of $\SL_2.$
\end{remark}

As a corollary, we obtain the limiting distribution of the values $H_p(\alpha,\beta|\lambda^2)$ as $\lambda$ varies over $\F_p$ and $p\to\infty$ in terms of the Meijer $G$-functions $G_{2,2}^{2,0}$ (see Definition~\ref{MeijerGDef}).

\begin{corollary}\label{corollary1}
 Let $d \in \{2,3,4,6\}, \alpha_d=\{\frac{1}{2d},1-\frac{1}{2d},\frac{1}{2d}+\frac{1}{2},-\frac{1}{2d}+\frac{1}{2}\},$ and $\beta=\{1,\frac{1}{2},1,\frac{1}{2}\}.$ If $-4\leq a<b\leq 4,$ then we have
        $$
        \lim\limits_{p\to\infty}\frac{\#\left\{\lambda\in\F_p : p^{-1/2}\cdot H_p(\alpha_d,\beta|\lambda^2)\in[a,b] \right\}}{p} = \int_a^b\frac{4}{\pi |t|} G_{2,2}^{2,0} \left[\begin{matrix} 2 & 3 \\  \frac{1}{2} & \frac{3}{2}\end{matrix} \; \bigg| \; \frac{t^2}{16} \right]dt.
        $$
\end{corollary}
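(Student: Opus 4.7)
The plan is to deduce Corollary~\ref{corollary1} from Theorem~\ref{theorem1} by the classical method of moments. For each prime $p$, let $\mu_p$ denote the empirical probability measure on $\R$ of the values $p^{-1/2}H_p(\alpha_d,\beta|\lambda^2)$ as $\lambda$ ranges over $\F_p$. Theorem~\ref{theorem1} says that the $m$-th moment of $\mu_p$ converges as $p\to\infty$ to $M_m$, where $M_{2m_1}=C(m_1)C(m_1+1)$ and $M_{2m_1+1}=0$. By Stirling's formula, $C(m_1)C(m_1+1)\sim 4\cdot 16^{m_1}/(\pi m_1^3)$, so $M_{2m_1}^{1/(2m_1)}\to 4$; hence $(M_m)$ is the moment sequence of a unique probability measure $\mu$ supported on $[-4,4]$ (by Carleman's condition, say). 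Moment convergence to such a determinate limit upgrades to weak convergence $\mu_p\Rightarrow\mu$, which yields the stated limit of the proportion of $\lambda\in\F_p$ with $p^{-1/2}H_p(\alpha_d,\beta|\lambda^2)\in[a,b]$ for every $-4\le a<b\le 4$ (all such endpoints are continuity points of $\mu$ since the claimed density is continuous on $[-4,4]$).

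What remains is to identify $\mu$ with the absolutely continuous measure $f(t)\,dt$, where
$$
f(t):=\frac{4}{\pi|t|}\,G_{2,2}^{2,0}\!\left[\begin{matrix}2 & 3 \\ \tfrac12 & \tfrac32\end{matrix}\;\bigg|\;\frac{t^2}{16}\right].
$$
The Meijer $G$-function $G_{2,2}^{2,0}$ with these parameters vanishes outside $(0,1)$ and is non-negative there -- a standard property of $G$-functions of type $(q,0)$, visible either from its Mellin--Barnes contour representation or from its realization as the density of a product of independent beta-type random variables -- so $f$ is a non-negative function supported on $[-4,4]$. Since $f$ is even, odd moments vanish, matching $M_{2m_1+1}=0$. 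For even moments, the substitution $u=t^2/16$ reduces the computation to the Mellin transform
$$
\int_0^1 u^{s-1}\,G_{2,2}^{2,0}\!\left[\begin{matrix}2 & 3 \\ \tfrac12 & \tfrac32\end{matrix}\;\bigg|\;u\right]du=\frac{\Gamma(s+\tfrac12)\,\Gamma(s+\tfrac32)}{\Gamma(s+2)\,\Gamma(s+3)}.
$$
Specializing to $s=m_1$ and using $\Gamma(m_1+\tfrac12)\Gamma(m_1+\tfrac32)=\pi(2m_1)!(2m_1+2)!/(4\cdot 16^{m_1}m_1!(m_1+1)!)$ together with $\Gamma(m_1+2)\Gamma(m_1+3)=(m_1+1)!(m_1+2)!$ collapses the result to
$$
\int_{-4}^{4}t^{2m_1}f(t)\,dt=\frac{(2m_1)!(2m_1+2)!}{m_1!(m_1+1)!^{2}(m_1+2)!}=C(m_1)C(m_1+1).
$$
Setting $m_1=0$ in particular gives total mass $C(0)C(1)=1$, so $f$ is a genuine probability density whose moments are exactly $(M_m)$; by determinacy, $\mu = f(t)\,dt$.

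The main step requiring care is the Meijer $G$ identification: invoking the support and non-negativity of $G_{2,2}^{2,0}$ with these parameters and its Mellin transform formula. The remaining ingredients -- the moment-problem uniqueness, the passage from moment convergence to weak convergence, and the Gamma-function algebra matching the two expressions for the $2m_1$-th moment -- are all routine.
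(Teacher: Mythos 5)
Your proof is correct and follows the same high-level strategy as the paper: apply the method of moments to the limiting moment sequence of Theorem~\ref{theorem1} and identify the unique distribution on $[-4,4]$ with those moments. The difference lies in how you compute the moments of the proposed Meijer~$G$ density. The paper's Lemma~\ref{momentsPDF} substitutes $\omega=t^2/16$, shifts parameters with part~(1) of Lemma~\ref{MeijerGProps}, applies the beta-type integral of part~(2) to produce a $G^{2,1}_{3,3}$ function evaluated at $z=1$, computes that value by the residue theorem, and finally invokes Lemma~\ref{CatalanGamma}. You bypass the $G^{2,1}_{3,3}$ detour entirely by quoting the standard Mellin-transform formula for $G^{q,0}_{p,q}$ at $s=m_1$, arriving at $\Gamma(m_1+\tfrac12)\Gamma(m_1+\tfrac32)/\bigl(\Gamma(m_1+2)\Gamma(m_1+3)\bigr)$ in one stroke; these two routes are algebraically equivalent, since the residue the paper extracts from the Mellin--Barnes contour for $G^{2,1}_{3,3}$ at $z=1$ is exactly the Mellin transform you invoke. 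You also spell out routine points the paper treats implicitly: that $G^{2,0}_{2,2}$ vanishes outside $(0,1)$ and is non-negative there (so $f$ really is a density on $[-4,4]$, with total mass $1$ coming from the $m_1=0$ moment), and that the moment problem is determinate because the moment growth $M_{2m_1}^{1/(2m_1)}\to 4$ is consistent only with compact support. Both arguments are sound; yours is marginally more direct, while the paper's stays self-contained within its quoted Meijer~$G$ identities.
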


\begin{example}
For the prime $p=524287,$ the histogram illustrates the case $d=2$ of Corollary~\ref{corollary1}, i.e. the near match with the limiting distribution.
\begin{table}[H]
\begin{center}
\includegraphics[height=50mm]{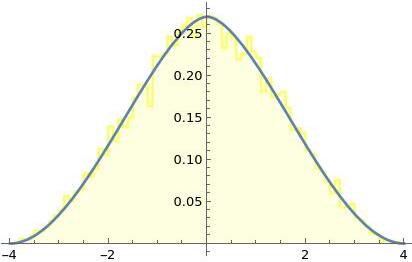}
\caption*{ \ \ \ $H_p(\alpha,\beta |\lambda^2)$ histogram for $d=2$ and $p=524287$}
\end{center}
\end{table}
\end{example}

Geometrically, the arithmetic of the $H_{p}$\footnote{In what follows, we denote by $H_{p}(\lambda)$ the hypergeometric functions in Theorem~\ref{theorem1} with $d$ understood from context.} functions in Theorem~\ref{theorem1} is intimately related to the arithmetic of certain families of elliptic curves $\widetilde{E}_{d,\lambda}$ (see Table~\ref{TableEC}). To make this precise, if $d\in\{2,3,4,6\},$ $p\geq 5$ is a prime, and $\lambda\in\F_p\setminus\{0,1\},$ then we have that (see Lemma~\ref{HpLegendreRelation})
\begin{equation}\label{eq:HGFLegCurveRelation}
H_p(\alpha_d,\beta|\lambda^2) = a_{p,d}(\lambda)+a_{p,d}(-\lambda),
\end{equation}
where $\alpha=\{\frac{1}{2d},1-\frac{1}{2d},\frac{1}{2d}+\frac{1}{2},-\frac{1}{2d}+\frac{1}{2}\},\beta=\{1,\frac{1}{2},1,\frac{1}{2}\},$ and where
$$
a_{p,d}(\lambda):= p + 1 -\#\widetilde{E}_{d,\lambda}(\F_p).
$$

In order to prove Theorem~\ref{theorem1}, we require a different set of tools than those used in \cite{HGFDistributionOnoSaikia, 3F2Explicit, ono2024distributionhessianvaluesgaussian, grove2024hypergeometricmomentshecketrace}. To make this precise, we shortly recall the method of proof in those works. To this end, recall that if $n\geq 0,$ then the $n$th Chebyshev polynomial of the second kind is defined as the unique polynomial $U_n(x)$ such that
\begin{equation}\label{ChebyshevPolynomialDefinition}
U_n(\cos\theta) = \frac{\sin((n+1)\theta)}{\sin(\theta)}.
\end{equation}
More explicitly, we have that
\begin{equation}\label{ChebyshevExplicit}
U_n(x):=\sum\limits_{k=0}^{\lfloor\frac{n}{2}\rfloor}(-1)^k\binom{n-k}{k}(2x)^{n-2k}.
\end{equation}
In this notation, if $d\in\{2,3,4\}$ and $m\geq 0,$ then the authors of \cite{HGFDistributionOnoSaikia,3F2Explicit,ono2024distributionhessianvaluesgaussian,grove2024hypergeometricmomentshecketrace} use either the theory of harmonic Maass forms or the theory of Hecke traces for triangle groups in \cite{HLLT} to express the twisted $m$th moment
$$
\frac{1}{p}\sum\limits_{\lambda\in\F_p}U_m\left(\frac{a_{p,d}(\lambda)}{2\sqrt{p}}\right)
$$
as a Fourier coefficient of a holomorphic cusp form. By Deligne's bound on the Fourier coefficients of cusp forms and induction, we have that if $m\geq 0,$ then the limiting $m$-th moment is given by
$$
\lim\limits_{p\to\infty}p^{-1-m/2}\sum\limits_{\lambda\in\F_p}a_{p,d}(\lambda)^m =\begin{cases}
    C(m_1) &\ \ \ \text{ if }m=2m_1\text{ is even } \\
    0 &\ \ \ \text{ otherwise}.
\end{cases}
$$
An application of the method of moments then gives the required distributions. In contrast to the methods described above, the moments of the $H_p$ functions we consider, or equivalently the mixed moments of $a_{p,d}(\lambda)$ and $a_{p,d}(-\lambda),$ cannot be described in terms of modular forms, which necessitates most of the work in this paper.

Here we use (\ref{eq:HGFLegCurveRelation}) in order to write the moments of $H_p$ as weighted combinations of mixed moments of the traces $a_{p,d}(\lambda)$ and $a_{p,d}(-\lambda).$ In order to apply the method of moments, we determine the asymptotics of the mixed moments for generic pairs of families of elliptic curves. To make this precise, for $i=1,2,$ let $a_{i,1}(\lambda),a_{i,2}(\lambda),a_{i,3}(\lambda),a_{i,4}(\lambda),a_{i,6}(\lambda)\in\Z[\lambda]$ and consider the two families of elliptic curves defined by
$$
E_{i,\lambda}:\ \ \ y^2+a_{i,1}(\lambda)xy+a_{i,3}(\lambda)y=x^3+a_{i,2}(\lambda)x^2+a_{i,4}(\lambda)x+a_{i,6}(\lambda).
$$
The family $E_{i,\lambda}$ can be put in a Weierstrass form (see III.1 of \cite{SilvermanAEC})
$$
E_{i,\lambda}:\ \ \ Y^2 = 4X^3-c_{i,4}(\lambda)X-c_{i,6}(\lambda),
$$
where $c_{i,4}(\lambda),c_{i,6}(\lambda)\in\Q(\lambda).$ In this notation, we let $\Delta_i(\lambda):= c_{i,4}(\lambda)^3-27c_{i,6}(\lambda)^2$ and $j_i(\lambda):=1728\cdot\frac{c_{i,4}(\lambda)^3}{\Delta_i(\lambda)}$ be the discriminant and $j$-invariant of $E_{i,\lambda}$ respectively. We say that $(E_{1,\lambda},E_{2,\lambda})$ is a generic pair if the following hold:
\begin{enumerate}
    \item $j_1(\lambda)$ and $j_2(\lambda)$ are nonconstant.
    \item There exists an $N\geq 1$ such that for $p\nmid N,$ the reductions of $E_{1,\lambda}$ and $E_{2,\lambda}$ mod $p$ have either good or multiplicative reduction at each $\lambda_0\in\overline{\F_p}.$
    \item There exists an $N\geq 1$ such that for $p\nmid N$ and $d\in\F_p^\times,$ the reductions of $E_{1,\lambda}$ and $E_{2,\lambda,d}$ mod $p$ are not isogenous over $\F_p(\lambda),$ where
    $$
    E_{2,\lambda,d}:\ \ \ dy^2 = 4x^3-c_{2,4}(\lambda)x-c_{2,6}(\lambda)
    $$
    is the quadratic twist of $E_{2,\lambda}$ by $d.$
\end{enumerate}
Finally, for $i=1,2$ and a prime $p,$ write $a_{i,p}(\lambda):=p+1-\#E_{i,\lambda}(\F_p).$  In this notation, we compute the limiting mixed moments of $a_{1,p}(\lambda)$ and $a_{2,p}(\lambda).$

\begin{theorem}\label{theorem2}
    If $(E_{1,\lambda},E_{2,\lambda})$ is a generic pair and $n,m\geq 0,$ then we have
    $$
    \lim\limits_{p\to\infty} p^{-1-(n+m)/2}\sum\limits_{\lambda\in\F_p} a_{1,p}(\lambda)^na_{2,p}(\lambda)^m = \begin{cases}
    C(n_1)C(m_1) & \ \text{\it if } n=2n_1 \text{ and } m=2m_1 \text{ are even} \\
    0 & \ \text{\it otherwise,}
    \end{cases}
    $$
    where $C(k)$ is the $k$th Catalan number.
\end{theorem}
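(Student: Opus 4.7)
The plan is to interpret the mixed moment via Grothendieck--Lefschetz as a trace of Frobenius on \'etale cohomology, and to extract the leading asymptotic by computing the joint geometric monodromy group of the pair of families through a Goursat-lemma argument.

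Let $U \subset \mathbb A^1_{\mathbb Z[1/N]}$ be the open subscheme obtained by removing the finite set of $\lambda$-values where either $E_{1,\lambda}$ or $E_{2,\lambda}$ has bad reduction, and set $U_p := U \otimes \F_p$. Let $\mathcal F_i := R^1\pi_{i,\ast}\Q_\ell$ denote the rank-two lisse $\Q_\ell$-sheaf on $U_p$ attached to $E_{i,\lambda}$; each $\mathcal F_i$ is pure of weight one, is symplectically self-dual via the Weil pairing $\mathcal F_i \otimes \mathcal F_i \to \Q_\ell(-1)$, and satisfies $\tr(\Frob_\lambda \mid (\mathcal F_i)_{\bar\lambda}) = a_{i,p}(\lambda)$. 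For fixed $n,m \geq 0$ set $\mathcal T_{n,m} := \mathcal F_1^{\otimes n} \otimes \mathcal F_2^{\otimes m}$, which is pure of weight $n+m$. The Grothendieck--Lefschetz trace formula gives
\begin{align*}
\sum_{\lambda \in U_p(\F_p)} a_{1,p}(\lambda)^n a_{2,p}(\lambda)^m \;=\; \sum_{i=0}^{2} (-1)^i \tr\bigl(\Frob_p \mid H_c^i(U_p \otimes \overline{\F_p}, \mathcal T_{n,m})\bigr).
\end{align*}
The finitely many bad-reduction $\lambda$ omitted from the full sum over $\F_p$ contribute $O(1)$, and Deligne's purity theorem bounds the $H^1_c$-contribution by $O(p^{1/2+(n+m)/2})$. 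The asymptotic is therefore controlled by $H^2_c$, which under the standard identification $H^2_c(U_p \otimes \overline{\F_p}, \mathcal T_{n,m}) \cong (\mathcal T_{n,m})_{G^{\geom}}(-1)$ is governed by the geometric monodromy group $G^{\geom} \subset \SL_2 \times \SL_2$ of $\mathcal F_1 \oplus \mathcal F_2$.

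The central step is to prove $G^{\geom} = \SL_2 \times \SL_2$. Michel's theorem, applicable under condition (1), gives that the projection of $G^{\geom}$ onto each $\SL_2$-factor is surjective. By Goursat's lemma, any proper subgroup of $\SL_2 \times \SL_2$ with this property is the graph of an automorphism of $\SL_2 / N$ for $N \in \{\{I\}, \{\pm I\}\}$. The case $N = \{I\}$ produces a geometric isomorphism $\mathcal F_1 \cong \mathcal F_2$, from which Faltings's isogeny theorem (in its function-field form for elliptic curves) yields an isogeny between $E_{1,\lambda}$ and $E_{2,\lambda}$ over $\overline{\F_p}(\lambda)$, contradicting condition (3) with $d=1$ after descent. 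The case $N = \{\pm I\}$ produces $\mathcal F_1 \cong \mathcal F_2 \otimes \chi$ for a quadratic character $\chi$ of $\pi_1^{\geom}(U_p)$; by condition (2) the local inertia on each $\mathcal F_i$ at every point of $U_p$ has only the eigenvalue $1$ (unipotent at multiplicative reduction, trivial at good reduction), so matching local inertias on $\mathcal F_1$ and $\mathcal F_2 \otimes \chi$ forces $\chi$ to be unramified at every finite puncture, and the relation $\sum \gamma_s = 0$ in the tame geometric abelianization makes $\chi$ trivial on the loop at infinity as well; thus $\chi$ is geometrically trivial, reducing to the previous case. Therefore $G^{\geom} = \SL_2 \times \SL_2$.

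With the monodromy identified, the invariant space splits as $(\mathcal T_{n,m})^{G^{\geom}} = (\mathcal F_1^{\otimes n})^{\SL_2} \otimes (\mathcal F_2^{\otimes m})^{\SL_2}$. By classical $\SL_2$-invariant theory (Lemma~\ref{oneRepMult}), each factor has dimension $C(k/2)$ if $k$ is even and vanishes otherwise; these invariants are generated by iterated Weil pairings, each contributing a Tate twist $\Q_\ell(-1)$, so Frobenius acts on $(\mathcal T_{n,m})^{G^{\geom}}$ by the scalar $p^{(n+m)/2}$. Combining with the Tate twist in $H^2_c$ gives
\begin{align*}
\tr\bigl(\Frob_p \mid H^2_c(U_p \otimes \overline{\F_p}, \mathcal T_{n,m})\bigr) \;=\; C(n_1)\, C(m_1) \cdot p^{1+(n+m)/2}
\end{align*}
when $n = 2n_1$ and $m = 2m_1$, and zero otherwise. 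Dividing by $p^{1+(n+m)/2}$ and sending $p \to \infty$ yields the theorem. The hard part will be the Goursat step: executing the local-monodromy bookkeeping sharply enough, especially at the point at infinity of $\A^1$ where condition (2) does not directly apply, to force the twisting character $\chi$ to descend to a constant, which is what lets condition (3) in its stated form close the argument.
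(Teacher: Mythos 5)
Your proposal follows the same arc as the paper: Grothendieck--Lefschetz to reduce moments to traces on cohomology, Deligne's purity to kill $H^1_c$ asymptotically, identification of the $H^2_c$-term with (co)invariants under the joint geometric monodromy group, and invariant theory for $\SL_2\times\SL_2$ to produce the Catalan numbers. Where you differ is in how you establish $G_{\geom}=\SL_2\times\SL_2$: the paper quotes the Goursat--Kolchin--Ribet criterion (Theorem~\ref{GKRCriterion}, from Katz) and then verifies its hypothesis $\mathcal F_1\not\cong_{\geom}\mathcal F_2\otimes\mathcal L$ via Lemmas~\ref{geomConstant}, \ref{geomConstantQuad} and Theorem~\ref{Parsin} (Proposition~\ref{IsogenyCondition}); you instead unwind the criterion from scratch with an explicit Goursat case analysis and the same inertia bookkeeping. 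That re-derivation is fine, and your treatment of the $N=\{\pm I\}$ case (matching unipotent inertia at multiplicative punctures plus the tame relation at infinity) correctly forces the twisting character to be geometrically trivial.

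The genuine gap is the final step, which you yourself flag as ``the hard part.'' After both Goursat cases you arrive at a \emph{geometric} isomorphism $\mathcal F_1\cong_{\geom}\mathcal F_2$, i.e. an isomorphism of $\pi_1^{\geom}(U_p)$-representations, equivalently of $\Gal(K^{\sep}/\,\overline{\F_p}K)$-modules. But Par\v{s}in's theorem (Theorem~\ref{Parsin}, the function-field isogeny theorem you call ``Faltings'') requires an isomorphism of Tate modules as $\Gal(K^{\sep}/K)$-modules with $K=\F_p(\lambda)$ a finitely generated field; it does not apply over $\overline{\F_p}(\lambda)$, and ``after descent'' conceals exactly the missing step. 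What the paper does (implicitly in Lemmas~\ref{geomConstant} and \ref{geomConstantQuad}) is: since $\mathcal F_1$ and $\mathcal F_2$ are geometrically irreducible, pure of weight $0$, with $\R$-valued Frobenius traces, a geometric isomorphism upgrades to an \emph{arithmetic} isomorphism only up to a geometrically constant rank-one twist, and purity plus reality force that twist to be quadratic. This quadratic arithmetic twist is realized by replacing $E_2$ with its quadratic twist $E_{2,d}$ for a suitable $d\in\F_p^\times$ (possibly a nonsquare). Only then does Par\v{s}in give an isogeny over $\F_p(\lambda)$, contradicting condition (3). Your claim that one contradicts condition (3) ``with $d=1$'' is therefore wrong: the whole reason condition (3) quantifies over all $d\in\F_p^\times$ is precisely to absorb this unavoidable geometric-to-arithmetic twist. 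As written, your argument does not close.

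A minor point: you define $\mathcal F_i$ as $R^1\pi_{i,\ast}\Q_\ell$; the paper uses $R^1\pi_{i,!}\Q_\ell$ (and then a half-Tate twist to weight $0$). For the affine Weierstrass family these differ by lower-weight pieces, and one should say which model or compactification one is using; this is cosmetic but should be fixed when writing up.
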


Using these moments, we determine the limiting joint distribution of the pair $(a_{1,p}(\lambda),a_{2,p}(\lambda)).$

\begin{corollary}\label{corollary2}
    If $(E_{1,\lambda},E_{2,\lambda})$ is a generic pair, $-2\leq a_1<b_1\leq 2,$ and $-2\leq a_2<b_2\leq 2,$ then we have
    \begin{align*}
    &\lim\limits_{p\to\infty}\frac{\#\left\{\lambda\in\F_p: p^{-1/2}\cdot a_{1,p}(\lambda)\in[a_1,b_1]\text{ and }p^{-1/2}\cdot a_{2,p}(\lambda)\in[a_2,b_2]\right\}}{p} \\
    & = \left(\frac{1}{2\pi} \int_{a_1}^{b_1} \sqrt{4-x_1^2}\ dx_1\right) \cdot \left(\frac{1}{2\pi} \int_{a_2}^{b_2} \sqrt{4-x_2^2}\ dx_2\right)
    \end{align*}
\end{corollary}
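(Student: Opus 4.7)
The plan is to derive Corollary \ref{corollary2} from Theorem \ref{theorem2} by a direct application of the (two-dimensional) method of moments. Let $\mu_p$ denote the empirical joint distribution
$$
\mu_p := \frac{1}{p}\sum_{\lambda\in\F_p}\delta_{\left(p^{-1/2}\,a_{1,p}(\lambda),\, p^{-1/2}\,a_{2,p}(\lambda)\right)}
$$
on $\R^2$. By the Hasse bound $|a_{i,p}(\lambda)|\leq 2\sqrt{p}$, each $\mu_p$ is supported on the compact square $[-2,2]^2$, so the limiting distribution (if it exists) will also be supported there.

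Next I would identify the candidate limit. The Sato--Tate measure $\mu_{ST}$ on $[-2,2]$ with density $\tfrac{1}{2\pi}\sqrt{4-x^2}$ has moments
$$
\int_{-2}^{2} x^{k}\,\mu_{ST}(dx) \;=\; \begin{cases} C(k/2) & \text{if $k$ is even},\\ 0 & \text{if $k$ is odd},\end{cases}
$$
a standard evaluation via the substitution $x=2\cos\theta$. Consequently, the product measure $\mu_{ST}\otimes \mu_{ST}$ on $[-2,2]^2$ has mixed moments
$$
\iint x_1^{n}\,x_2^{m}\, (\mu_{ST}\otimes \mu_{ST})(dx_1\,dx_2) \;=\; \begin{cases} C(n_1)C(m_1) & \text{if $n=2n_1$ and $m=2m_1$,}\\ 0 & \text{otherwise.}\end{cases}
$$
Theorem \ref{theorem2} is precisely the statement that the mixed moments of $\mu_p$ converge to these same values as $p\to\infty$.

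The final step is to upgrade moment convergence to weak convergence. Since $\mu_{ST}\otimes \mu_{ST}$ is compactly supported, it is uniquely determined by its moments (polynomials are dense in the continuous functions on $[-2,2]^2$ by Stone--Weierstrass). Combined with the uniform compact support of $\mu_p$, the standard multivariate method of moments yields $\mu_p \Rightarrow \mu_{ST}\otimes \mu_{ST}$ weakly. The rectangle $[a_1,b_1]\times[a_2,b_2]$ has boundary of $(\mu_{ST}\otimes \mu_{ST})$-measure zero (the semicircle density is continuous), so the portmanteau theorem converts weak convergence into the pointwise statement
$$
\lim_{p\to\infty}\mu_p\bigl([a_1,b_1]\times[a_2,b_2]\bigr) \;=\; (\mu_{ST}\otimes \mu_{ST})\bigl([a_1,b_1]\times[a_2,b_2]\bigr),
$$
which, upon factoring the right-hand side, is exactly the product of integrals in the statement.

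There is essentially no obstacle beyond Theorem \ref{theorem2} itself: the method-of-moments step is formal once the Catalan moment identities are in hand. The only minor care needed is to ensure the mixed-moment convergence holds for \emph{all} $(n,m)\geq 0$ (which Theorem \ref{theorem2} provides), as otherwise one could not separate measures on $[-2,2]^2$.
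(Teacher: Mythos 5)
Your proposal is correct and takes essentially the same route as the paper: the paper derives Corollary~\ref{corollary2} from Theorem~\ref{theorem2} via Lemma~\ref{mixedMomentsPDF}, which is precisely the two-variable method of moments (citing Billingsley) applied to the product semicircle density. Your writeup merely unwinds the Billingsley citation into the compact-support, Stone--Weierstrass, and portmanteau ingredients, which is fine but not a different argument.
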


\begin{remark}
The conditions of a generic pair of elliptic curves which is required for Theorem~\ref{theorem2} and Corollary~\ref{corollary2} can be relaxed (see Proposition~\ref{mixedMoments}) to the condition that
    $$
    R^1\pi_{1,!}\Q_\ell\not\cong_{\text{geom}} R^1\pi_{2,!}\Q_\ell\otimes\mathcal{L}
    $$
    for any rank $1$ $\ell$-adic system $\mathcal{L},$ where $\pi_{i}$ is the projection map $(x,y,\lambda)\to \lambda.$ For example (see Lemma~\ref{InertiaAction} and proof of Lemma~\ref{geomConstant}), if $E_{1,\lambda_0}$ has additive potentially multiplicative reduction for some $\lambda_0\in\A_{\F_p}$ while $E_{2,\lambda_0}$ has good reduction, this condition would be satisfied. In general, if one allows additive reduction, then the situation is more delicate and one needs to take into account twists by nonconstant rational functions.
\end{remark}

We remark that the methods used to prove Theorem~\ref{theorem2} and Corollary~\ref{corollary2} immediately offer a different - more geometric - proof of the limiting moments of the hypergeometric functions of length $2$ studied in \cite{HGFDistributionOnoSaikia,ono2024distributionhessianvaluesgaussian,grove2024hypergeometricmomentshecketrace}. Moreover, it also gives the case $d=6$ which is not accessible through the modular methods used in these works.

\begin{theorem}\label{theorem3}
    If $d\in\{2,3,4,6\}$ and $ m\geq 0,$ then we have
    $$
    \lim\limits_{p\to\infty}p^{-1-m/2}\sum\limits_{\lambda\in\F_p}H_p\left(\begin{array}{cc}
    \frac{1}{d} & \frac{d-1}{d}  \\
     1 & 1
\end{array}\bigg|\ \lambda\right)^m= \begin{cases}
    C(m_1) &\ \ \ \text{ if }m=2m_1\text{ is even} \\
    0 &\ \ \ \text{ otherwise.}
\end{cases}
    $$
\end{theorem}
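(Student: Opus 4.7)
The plan is to reduce the $m$th moment of $H_p\!\left(\tfrac{1}{d},\tfrac{d-1}{d};1,1;\lambda\right)$ to the $m$th moment of Frobenius traces of a single one-parameter family of elliptic curves, and then run the étale-cohomological/monodromy machinery developed for Theorem~\ref{theorem2}, specialized to a single family rather than a pair.

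First, for each $d \in \{2,3,4,6\}$ I would establish an identity of the form
\[
H_p\!\left(\begin{array}{cc} \tfrac{1}{d} & \tfrac{d-1}{d} \\ 1 & 1 \end{array} \bigg|\ \lambda\right) = \pm\, a_{p,d}(\lambda),
\]
valid for all but finitely many $\lambda \in \F_p$, where $E_{d,\lambda}$ is an appropriate nonisotrivial one-parameter family of elliptic curves whose Picard--Fuchs equation is the classical hypergeometric differential equation with parameters $(\tfrac{1}{d},\tfrac{d-1}{d};1)$. For $d=2$ this is the classical Koike/Ono identity for the Legendre family; for $d=3,4$ analogous identities appear in work of El-Guindy--Ono and Lennon. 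For $d=6$ the same Gauss-sum manipulation used in the proof of Lemma~\ref{HpLegendreRelation}---but applied to length-$2$ rather than length-$4$ data---should produce the analogous identity, and this case is precisely the one inaccessible to the modular methods of the earlier works.

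With $H_p$ so expressed, the sum $\sum_{\lambda \in \F_p} H_p^m$ coincides, up to an $O(p^{m/2})$ error from the finitely many excluded $\lambda$, with the $m$th power sum of $a_{p,d}(\lambda)$. I would then verify the single-family analogues of the genericity conditions of Theorem~\ref{theorem2}: $j_d(\lambda)$ is nonconstant, and for all but finitely many $p$ every geometric fiber of $E_{d,\lambda}$ has good or multiplicative reduction. These are direct calculations with the explicit families. Under these conditions, the rank-$2$ $\ell$-adic sheaf $\mathcal F := R^1\pi_{d,!}\Q_\ell$ on the good locus is lisse and pure of weight $1$, and Katz's monodromy results---non-isotriviality of $j_d$ together with the unipotent local monodromies at the places of multiplicative reduction---force the geometric monodromy group to be $\SL_2$.

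Deligne's equidistribution theorem then yields, as $p\to\infty$, the equidistribution of the normalized traces $\{a_{p,d}(\lambda)/\sqrt{p} : \lambda \in \F_p\}$ with respect to the pushforward of Haar measure on $\SU_2$ under the trace map, i.e.\ the semicircle Sato--Tate measure. Integrating $t^m$ against this measure produces $C(m_1)$ when $m=2m_1$ and $0$ when $m$ is odd, as required. The main obstacle is the hypergeometric-to-elliptic-curve identification for $d=6$, where one must carefully track signs, the trivial-character contributions at $\lambda \in \{0,1\}$, and the relevant twist data; once that identification is in hand, the single-family specialization of the étale-cohomological machinery underlying Theorem~\ref{theorem2} delivers the stated moments essentially verbatim.
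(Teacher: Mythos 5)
Your proposal is correct and follows the paper's own argument: Lemma~\ref{generalizedKoike} identifies $H_p\bigl(\tfrac1d,\tfrac{d-1}{d};1,1\mid\lambda\bigr)$ with $a_{p,d}(\lambda)$ for all $d\in\{2,3,4,6\}$ (this identity is cited to Koike, BCM, and HLLT, so the $d=6$ case needs no new Gauss-sum derivation---what was previously out of reach for $d=6$ was the distribution, not the identity), and then Proposition~\ref{mixedMoments} with $n=0$, equivalently Michel's one-parameter Sato--Tate theorem, yields the Catalan moments once one checks that $j_d(\lambda)$ is nonconstant. Your phrasing via Deligne equidistribution and integrating $t^m$ against the semicircle measure is an equivalent repackaging of the paper's direct computation of the trivial-isotypic multiplicity in $\mathcal F^{\otimes m}$.
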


Finally, with very slight modifications to our method, we offer an alternative proof of the limiting moments of the hypergeometric function of length $3$ studied in \cite{HGFDistributionOnoSaikia, 3F2Explicit}.

\begin{theorem}\label{theorem4}
    If $m\geq 0,$ then we have
    $$
    \lim\limits_{p\to\infty}p^{-1-m}\sum\limits_{\lambda\in\F_p}H_p\left(\begin{array}{ccc}
    \frac{1}{2} & \frac{1}{2} & \frac{1}{2} \\
     1 & 1 & 1
\end{array}\bigg|\ \lambda\right)^m= \begin{cases}
    \sum\limits_{i=0}^m(-1)^i\binom{m}{i}\frac{(2i)!}{i!(i+1)!} &\ \ \ \text{ if }m\text{ is even} \\
    0 &\ \ \ \text{ if }m\text{ is odd.}
\end{cases}
    $$
\end{theorem}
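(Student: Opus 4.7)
The plan is to adapt the geometric machinery used for Theorems \ref{theorem2} and \ref{theorem3} to the rank-$3$ $\ell$-adic hypergeometric sheaf attached to the datum $(\tfrac12,\tfrac12,\tfrac12;\,1,1,1)$. The first step is to establish, for all $\lambda\in\F_p\setminus\{0,1\}$, an identity of the form
$$
H_p\left(\begin{array}{ccc}
    \tfrac{1}{2} & \tfrac{1}{2} & \tfrac{1}{2} \\
     1 & 1 & 1
\end{array}\bigg|\ \lambda\right) = \chi(\lambda)\left(a_p(\lambda)^2 - p\right),
$$
where $a_p(\lambda) := p+1-\#E_\lambda(\F_p)$ is the Frobenius trace of the Legendre family $E_\lambda: y^2 = x(x-1)(x-\lambda)$ (the $d=2$ case of Theorem \ref{theorem3}) and $\chi$ is an explicit nontrivial quadratic character of $\F_p^\times$. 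Geometrically this expresses the associated rank-$3$ hypergeometric sheaf as $\Sym^2(R^1\pi_!\Q_\ell)\otimes\mathcal{L}_\chi$, the finite-field shadow of Clausen's identity $\left({}_2F_1(\tfrac12,\tfrac12;1 \mid z)\right)^2 = {}_3F_2(\tfrac12,\tfrac12,\tfrac12;1,1 \mid z)$; it can be verified by a direct character-sum manipulation starting from the definition of $H_p$, or extracted from Katz's classification of hypergeometric sheaves. The key arithmetic input is that $\det(R^1\pi_!\Q_\ell) = \Q_\ell(-1)$, so the trace on $\Sym^2$ evaluates to $\alpha^2+\alpha\beta+\beta^2 = (\alpha+\beta)^2-\alpha\beta = a_p(\lambda)^2 - p$.

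Granting the identity, inserting it into the $m$-th moment and expanding via the binomial theorem gives
$$
p^{-1-m}\sum_{\lambda\in\F_p} H_p(\lambda)^m = \sum_{k=0}^m \binom{m}{k}(-1)^{m-k}\, p^{-1-k}\sum_{\lambda\in\F_p}\chi(\lambda)^m\, a_p(\lambda)^{2k} + o(1).
$$
If $m$ is even then $\chi^m$ is trivial and Theorem \ref{theorem3} applied to the Legendre family yields $p^{-1-k}\sum_\lambda a_p(\lambda)^{2k}\to C(k)$, so the limit collapses to $\sum_{k=0}^m(-1)^k\binom{m}{k} C(k)$, matching the claimed formula. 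If $m$ is odd then $\chi^m=\chi$ is nontrivial and it suffices to show that each twisted sum $p^{-1-k}\sum_\lambda \chi(\lambda)\, a_p(\lambda)^{2k}$ vanishes in the limit. Writing $a_p(\lambda)^{2k} = \tr\left(\Frob_\lambda \mid (R^1\pi_!\Q_\ell)^{\otimes 2k}\right)$ and applying Deligne's equidistribution together with Katz's monodromy analysis, exactly as in the proof of Proposition \ref{mixedMoments}, the vanishing reduces to the geometric fact that $R^1\pi_!\Q_\ell$ of the Legendre family is not isomorphic to its Kummer twist by $\chi$, or equivalently that the Legendre family is not geometrically isogenous to its quadratic twist by $\chi$, which follows from the inertia-action calculation (cf.\ Lemma \ref{InertiaAction}) at the bad fibres $\lambda \in \{0, 1, \infty\}$.

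The main obstacle is pinning down the explicit Kummer twist $\chi$ in the first step; only the nontriviality (and quadratic nature) of $\chi$ is actually needed for the moment computation, but identifying it precisely -- whether by direct character-sum manipulation on $H_p$ or via Katz's theory -- is the unglamorous bookkeeping that turns the proof sketch into a rigorous argument. With this in place, combining the two parity cases immediately yields Theorem \ref{theorem4}.
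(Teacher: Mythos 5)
Your overall architecture—express $H_p$ as a Kummer twist of $\Sym^2$ of an elliptic-curve sheaf, binomially expand $(a_p^2-p)^m$ to get the even moments, and kill the odd moments using the nontriviality of the twist—is exactly the paper's route (Lemma~\ref{hypergeometricClausen}, Lemma~\ref{symsquareRepMult}, and Proposition~\ref{3F2MomEC}). However, the first step as you state it is false. The $_3F_2$ at hand is \emph{not} $\chi(\lambda)(a_p^{\mathrm{Leg}}(\lambda)^2-p)$ for the Legendre family; the correct identity (Lemma~\ref{hypergeometricClausen}, citing [EC5, Th.~5]) uses the \emph{Clausen} family $E_\lambda^{\Cl}: y^2=(x-1)(x^2+\lambda)$ together with the M\"obius shift $\lambda\mapsto\lambda/(\lambda+1)$ and the Kummer factor $\phi_p(\lambda+1)$. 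These are not cosmetic differences: rewriting the Clausen side in the $H_p$-argument variable $\mu=\lambda/(\lambda+1)$, one finds $j^{\Cl}\!\left(\tfrac{\mu}{1-\mu}\right)=\tfrac{-64(1-4\mu)^3}{\mu}$, which at $\mu=\tfrac12$ gives $128$, while $j^{\mathrm{Leg}}(\tfrac12)=1728$; since these are not even 2-isogenous, $a_p^{\mathrm{Leg}}(\mu)^2\ne a_p^{\Cl}\!\left(\tfrac{\mu}{1-\mu}\right)^2$ generically and your claimed identity cannot hold for any character $\chi$. Your dismissal of ``pinning down $\chi$'' as unglamorous bookkeeping misses that you have the wrong curve altogether, not merely an unspecified character. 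The fix is to use the Clausen family (which also has nonconstant $j$-invariant, so the moment input from Proposition~\ref{mixedMoments} with $n=0$ still applies verbatim).

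There is also a conceptual misstep in your odd-moment argument. Vanishing of $p^{-1-k}\sum_\lambda\chi(\lambda)\,a_p(\lambda)^{2k}$ does not reduce to ``$R^1\pi_!\Q_\ell$ is not geometrically isomorphic to its Kummer twist,'' nor to the non-isogeny of a family with its quadratic twist; that kind of independence statement is what Lemma~\ref{InertiaAction} and Proposition~\ref{IsogenyCondition} feed into for the mixed moments of \emph{two distinct} families (Proposition~\ref{mixedMoments}). What is actually needed here is: after decomposing $\mathcal{F}^{\otimes 2k}\otimes\mathcal{L}_\chi=\bigoplus_r n_{2k}(r)\,\Sym^r\mathcal{F}\otimes\mathcal{L}_\chi$, the $r\ge1$ summands are irreducible of rank $\ge2$ by Lemma~\ref{twistIrreducibility} hence have no co-invariants, and the $r=0$ summand $\mathcal{L}_\chi$ has no co-invariants because the Kummer sheaf of a nontrivial character is not geometrically constant. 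This is precisely how the paper's proof of Proposition~\ref{3F2MomEC} proceeds. Your even-moment computation (binomial expansion plus $p^{-1-k}\sum_\lambda a_p(\lambda)^{2k}\to C(k)$ from Theorem~\ref{theorem3}) is correct and is exactly the content of Lemma~\ref{symsquareRepMult}, just phrased at the level of traces rather than multiplicities.
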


\begin{remark}
    We give the following remarks regarding Theorems~\ref{theorem2}, \ref{theorem3}, \ref{theorem4}, and Corollary~\ref{corollary2}.

    \begin{enumerate}
        \item We remark that Theorem~\ref{theorem2} and Corollary~\ref{corollary2} are well-known to the experts (for example, see Chapters 9 and 10 of \cite{KatzSarnak}). The work in this paper illustrates how the geometric results obtained by Deligne \cite{WeilII}, Katz \cite{KatzGKM,KatzESDE}, and others serve to obtain arithmetic-statistical results in number theory, as well as to explain the appearance of Catalan numbers in \cite{HGFDistributionOnoSaikia, 3F2Explicit, ono2024distributionhessianvaluesgaussian, grove2024hypergeometricmomentshecketrace}. Moreover, our methods, when restricted to the cases studied with modular methods, yield an explicit description of the ``mysterious'' cohomology groups that appear within this paper, elucidating the connection between the modular methods and the abstract machinery of \'etale cohomology in the situations where such a connection exists (through a universal elliptic curve over a modular curve; for example, see Sections 3 and 4 of \cite{HLLT}).

        \item We remark that Theorem~\ref{theorem3} follows immediately from the work of Michel \cite{RangMoyen}. We add it here for completeness  and to illustrate the connection between the geometric methods used here and the modular methods used in \cite{HGFDistributionOnoSaikia,ono2024distributionhessianvaluesgaussian,grove2024hypergeometricmomentshecketrace}.

        \item The methods used to prove Theorem~\ref{theorem4} explicitly bound the difference between the actual and limiting moments which leads to an explicit bound on the difference between the limiting and actual distributions of the $H_p$ functions \`a la \cite{3F2Explicit}. It is important to note that this yields no extra power-savings in the error compared to modular methods.

        \item Geometrically, the arithmetic of the $H_p$ functions in Theorem~\ref{theorem4} is related to twisted symmetric squares of the Clausen family of elliptic curves (see Lemma~\ref{hypergeometricClausen}).

        \item The moments in Theorem~\ref{theorem4} arise (see \cite{pastur2004moments}) as the even moments of the traces of the Lie group $\OO_3.$ More precisely, if $m\geq 0$ is even, then we have
        $$
        \int_{\OO_3}\Tr(X)^{m}dX = \sum\limits_{i=0}^{m}(-1)^i\binom{m}{i}\frac{(2i)!}{i!(i+1)!}.
        $$
    \end{enumerate}
\end{remark}

Our proof of Theorem~\ref{theorem2} closely mirrors the work of Michel on the equidistribution of Frobenius traces for a generic family of elliptic curves \cite[Proposition 1.1]{RangMoyen}. First, we describe $a_{1,p}(\lambda)$ and $a_{2,p}(\lambda)$ as traces of Frobenius on two $\ell$-adic sheaves $\mathcal{F}_1$ and $\mathcal{F}_2$ lisse on an open subset $U_p\subset\A_{\F_p}^1.$ In this notation, the $(m,n)$-th mixed moments are then described as sums of traces of Frobenius on the sheaves $\mathcal{F}_1^{\otimes n}\otimes\mathcal{F}_2^{\otimes n}.$ Under the conditions on a generic pair, we decompose these sheaves as a direct sum of irreducible representations of the so-called geometric monodromy group. The multiplicities of the trivial representation in these direct sum decompositions are either $0$ or products of Catalan numbers. This allows us to explicitly compute the contribution of the trivial parts to the moments.  The contribution of the non-trivial parts is then shown to be small using the Grothendieck--Lefschetz trace formula and Deligne's \cite{WeilII} results on the purity of cohomology groups in his seminal proof of the generalized Weil conjectures. The proof of Theorem~\ref{theorem3} follows from the intermediate propositions that lead up to proving Theorem~\ref{theorem2}, whereas Theorem~\ref{theorem4} follows from the same methods with slight modifications.

This paper is organized as follows. In Section~\ref{Section2}, we recall important facts on the relations between the $H_p$ functions mentioned above and families of elliptic curves. In Section~\ref{Section3}, we recall necessary background from the theory of \'etale cohomology and important facts on the first cohomology of elliptic curves. In Section~\ref{Section4}, we recall facts from representation theory. In Section~\ref{Section5}, we apply the results from \'etale cohomology and representation theory to compute the mixed moments of families of elliptic curves under a certain ``independence'' condition of their first cohomologies. In Section~\ref{Section6}, we describe this ``independence'' condition in terms of isogenies of families of elliptic curves viewed as elliptic curves over $\F_p(\lambda).$ In Section~\ref{Section7}, we recall the method of moments and derive the necessary distributions for Corollaries~\ref{corollary1} and \ref{corollary2}. In Section~\ref{Section8}, we conclude with the proofs of Theorems~\ref{theorem1} through \ref{theorem4} and Corollaries~\ref{corollary1} and \ref{corollary2}.

\section*{Acknowledgements}
The authors would like to thank Ling Long and Fang-Ting Tu for the numerous discussions and advice which highly contributed to the quality of this paper.
The authors would also like to thank Bill Hoffman and Ken Ono for their multiple comments, discussions, and helpful remarks. The authors also thank Wanlin Li who pointed out an error in an earlier draft. Finally, the authors thank the anonymous referee whose comments improved the quality of the exposition.
The first author was partially supported by a research assistantship from the Louisiana State University (LSU) Department of Mathematics.

\section{The \texorpdfstring{$H_p$}{Hp} functions and the arithmetic of elliptic curves}\label{Section2}

Here we recall important facts about the relations between $H_p$ functions and the arithmetic of one-parameter families of elliptic curves.

We first recall the relationship between the hypergeometric functions in Theorem~\ref{theorem3} and elliptic curves. If $d\in\{2,3,4,6\}$ and $p\geq 5$ is prime, define the families of elliptic curves $\widetilde{E}_{d,\lambda}$ by
\renewcommand{\arraystretch}{1.2}
\begin{table}[H]
\begin{center}
\scalebox{0.92}{
    \begin{tabular}{|c|c|}
    \hline
       $d$  & $\widetilde{E}_{d,\lambda}$  \\ \hline
       $2$  & $y^2=x(1-x)(x-\lambda)$ \\ \hline
       $3$ & $y^2+xy+\frac{\lambda}{27}y = x^3$ \\ \hline
       $4$ & $y^2=x(x^2+x+\frac{\lambda}{4})$ \\ \hline
       $6$ & $y^2+xy=x^3-\frac{\lambda}{432}$ \\ \hline
    \end{tabular}}
\end{center}
\caption{}\label{TableEC}
\end{table}
\renewcommand{\arraystretch}{1}
In this notation, the $H_p$ functions give the finite field point counts on these elliptic curves.

\begin{lemma}\cite{Koike, BCM, HLLT}\label{generalizedKoike}
    If $p\geq5$ is prime, $d\in\{2,3,4,6\},$ and $\lambda\in\F_p\setminus\{0,1\},$ then we have
    $$
    \#\widetilde{E}_{d,\lambda}(\F_p)=p+1-H_p\left(\begin{array}{cc}
    \frac{1}{d} & \frac{d-1}{d}  \\
     1 & 1
\end{array}\bigg|\ \lambda\right).
    $$
\end{lemma}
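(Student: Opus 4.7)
The plan is to express both sides as explicit character sums over $\F_p$ and match them using standard Gauss sum manipulations. First, for each $d \in \{2,3,4,6\}$, I would put $\widetilde{E}_{d,\lambda}$ into short Weierstrass form $y^2 = f_d(x,\lambda)$, completing the square when $d = 3, 6$ to eliminate the $xy$ term. The classical point-count formula then gives
$$\#\widetilde{E}_{d,\lambda}(\F_p) = p + 1 + \sum_{x \in \F_p} \phi\bigl(f_d(x,\lambda)\bigr),$$
where $\phi$ denotes the quadratic character on $\F_p^\times$ extended by $\phi(0)=0$. So the task reduces to showing that the displayed hypergeometric function equals $-\sum_x \phi(f_d(x,\lambda))$.

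Next, I would unfold the definition of $H_p$ as a sum over $k$ of Gauss sum ratios, use $g(\chi) g(\bar\chi) = \chi(-1) p$ for nontrivial $\chi$, and collapse the $b_i = 1$ factors via $g(\omega^0) = -1$, producing a sum of Jacobi sums weighted by $\omega^k(\lambda)$. Independently, I would expand $\sum_x \phi(f_d(x,\lambda))$ by writing $\phi$ of each irreducible factor of $f_d$ via multiplicative Fourier inversion, again obtaining a sum of Jacobi sums weighted by characters of $\lambda$. The bridge between the two expansions is the Hasse--Davenport product relation
$$\prod_{j=0}^{d-1} g\bigl(\chi\eta^j\bigr) = \chi^{-d}(d) \, g(\chi^d) \prod_{j=1}^{d-1} g(\eta^j),$$
for $\eta$ a fixed character of order $d$. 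This identity is what converts Gauss sums at characters of order $d$, which appear naturally when factoring the cubics or quartics underlying $\widetilde{E}_{d,\lambda}$, into Gauss sums at the characters $\omega^{\pm(p-1)/d}$ that produce the parameters $1/d$ and $(d-1)/d$ in $H_p$.

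For $d = 2$ this collapses to the original Koike computation from a character-sum expansion of the Legendre family. For $d = 3, 4, 6$ the argument is a special case of the Beukers--Cohen--Mellit template, and one can either carry out each of the three cases by a uniform short calculation or directly quote the relevant propositions from \cite{BCM} and \cite{HLLT}. The main obstacle is the combinatorial bookkeeping: tracking the exceptional values of $k$ where some $\omega^{k + (p-1)a_i}$ or $\omega^{-k - (p-1)b_i}$ becomes trivial (so the corresponding Gauss sum degenerates to $-1$), and matching these correctly against the boundary contributions from $x$ hitting a root of $f_d$ on the geometric side. The hypothesis $\lambda \notin \{0,1\}$ guarantees that $\widetilde{E}_{d,\lambda}$ is nonsingular, so that no such boundary contribution blows up and the term-by-term identification goes through cleanly.
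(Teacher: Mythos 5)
The paper does not supply its own proof of this lemma; it is quoted directly from the cited sources (Koike for $d=2$, Beukers--Cohen--Mellit and Huang--Long--Li--Tu for the general datum). Your outline reproduces exactly the strategy of those references: reduce to the character-sum point count $\#\widetilde{E}_{d,\lambda}(\F_p)=p+1+\sum_{x}\phi(f_d(x,\lambda))$, expand $H_p$ as a sum of Gauss-sum ratios (hence Jacobi sums), and reconcile the two expansions via the Hasse--Davenport product relation, which trades Gauss sums at order-$d$ characters for the ones carrying the parameters $\tfrac{1}{d}$, $\tfrac{d-1}{d}$. This is the right plan, and your diagnosis that the real content is in tracking the degenerate indices $k$ (where a Gauss sum collapses to $-1$) against boundary terms on the geometric side is exactly where the bookkeeping in \cite{BCM} lives. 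The only caveat is that, as you acknowledge, you have sketched the reduction rather than carrying out the case-by-case verification; as an argument it is an accurate summary of the cited proofs rather than a self-contained one, and for $d=6$ in particular the relevant uniform statement is Theorem 1.5 of \cite{BCM} specialized to the datum $\{\tfrac{1}{d},\tfrac{d-1}{d}\},\{1,1\}$.
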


For the hypergeometric datum $\alpha=\{\frac{1}{2},\frac{1}{2},\frac{1}{2}\}, \beta=\{1,1,1\},$ the $H_p$ function is related to the finite field point counts on the Clausen elliptic curves
$$
E_\lambda^{\Cl}:\ \ \ y^2 = (x-1)(x^2+\lambda).
$$

\begin{lemma}[Th. 5 of \cite{EC5}]\label{hypergeometricClausen}
    If $p\geq 5$ is prime and $\lambda\in\F_p\setminus\{0,-1\},$ then we have
    $$
    H_p\left(\begin{array}{ccc}
    \frac{1}{2} & \frac{1}{2} & \frac{1}{2}  \\
     1 & 1 & 1
\end{array}\bigg|\ \frac{\lambda}{\lambda+1}\right) = \phi_p(\lambda+1)(a_p^{\Cl}(\lambda)^2-p),
    $$
    where $\phi_p$ is the Legendre character on $\F_p$ and where
    $$
    a_p^{\Cl}(\lambda):=p+1-\#E_\lambda^{\Cl}(\F_p).
    $$
\end{lemma}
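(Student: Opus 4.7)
The plan is to verify the identity by writing both sides as explicit character sums over $\F_p$ and matching them. Since $p \geq 5$, the elliptic curve $E_\lambda^{\Cl}$ is smooth for $\lambda \in \F_p \setminus \{0,-1\}$, and a standard count via the Legendre character gives
\begin{equation*}
a_p^{\Cl}(\lambda) = -\sum_{x\in\F_p}\phi_p\bigl((x-1)(x^2+\lambda)\bigr).
\end{equation*}
Squaring and isolating the diagonal, I would express the right-hand side as the double sum
\begin{equation*}
\phi_p(\lambda+1)\bigl(a_p^{\Cl}(\lambda)^2 - p\bigr) = \phi_p(\lambda+1)\sum_{x,y\in\F_p}\phi_p\bigl((x-1)(y-1)(x^2+\lambda)(y^2+\lambda)\bigr) - \phi_p(\lambda+1)p.
\end{equation*}

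For the left-hand side, I would invoke the Greene/McCarthy character-sum description of $H_p$ with the all-$\tfrac{1}{2}$ upper and all-$1$ lower parameters. Using the Hasse--Davenport relation and the Gauss sum evaluation $g(\phi_p)^2 = \phi_p(-1)\cdot p$, the Gauss sum expansion in the definition of $H_p$ collapses to an explicit double Jacobi-type character sum. Substituting the argument $\lambda/(\lambda+1)$ and clearing the denominator $\lambda+1$ inside the outer character (which is precisely what produces the leading factor $\phi_p(\lambda+1)$), I would then perform a change of variables in the two summation indices designed to align both sides term by term.

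Conceptually, this identity is the finite-field avatar of Clausen's classical transformation
\begin{equation*}
\left({}_2F_1\!\left(\tfrac{1}{4},\tfrac{1}{4};1;z\right)\right)^2 = {}_3F_2\!\left(\tfrac{1}{2},\tfrac{1}{2},\tfrac{1}{2};1,1;z\right),
\end{equation*}
which reflects the fact that the $\ell$-adic Galois representation underlying this ${}_3F_2$ is (up to a quadratic twist) the symmetric square of the representation attached to the Clausen family, for which $\Tr(\Frob_p \mid \mathrm{Sym}^2) = a_p^{\Cl}(\lambda)^2 - p$. The main obstacle is the bookkeeping: one has to pin down that $z = \lambda/(\lambda+1)$ is exactly the correct change of argument (forced by relating the $j$-invariant of $E_\lambda^{\Cl}$ to the canonical Legendre-type parameter of the hypergeometric motive), track every Legendre twist introduced by clearing denominators, and verify that the only omitted fibers are the singular $\lambda\in\{0,-1\}$. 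Once these substitutions are aligned, both sides reduce to the same explicit double character sum, finishing the proof.
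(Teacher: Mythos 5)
The paper does not prove this lemma; it simply cites it as Theorem 5 of \cite{EC5} (Ono's work on values of Gaussian hypergeometric series, building on Greene and Evans--Greene's finite-field Clausen theorem). So there is no internal proof to compare against, but the relevant literature does proceed by the character-sum route you sketch, and your conceptual framing is right: this is the finite-field avatar of Clausen's formula, and the symmetric-square interpretation of the $\ell$-adic side is exactly the reason $a_p^{\Cl}(\lambda)^2-p$ appears.

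That said, what you have written is a plan, not a proof. The entire content of the lemma lies in the bookkeeping you defer: you write down the RHS as $\phi_p(\lambda+1)\bigl(\sum_{x,y}\phi_p((x-1)(y-1)(x^2+\lambda)(y^2+\lambda)) - p\bigr)$, which is tautological (calling the subtraction of $p$ ``isolating the diagonal'' is also misleading --- the true diagonal contribution $\sum_x \phi_p((x-1)^2(x^2+\lambda)^2)$ is $p$ minus the number of zeros of $(x-1)(x^2+\lambda)$, not $p$ itself), and for the LHS you assert that the Gauss-sum definition ``collapses to an explicit double Jacobi-type character sum'' and that after a change of variables ``both sides reduce to the same explicit double character sum.'' Those two assertions are the theorem; neither is carried out. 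Concretely missing are: the reduction of the length-three $H_p$ sum (via $g(\chi)g(\overline{\chi})=\chi(-1)p$ and Hasse--Davenport) to a bivariate sum of Legendre symbols of rational functions in $\lambda$; the explicit substitution that turns $z=\lambda/(\lambda+1)$ into the coefficients of $(x-1)(x^2+\lambda)$; and the verification that every Legendre factor produced by clearing $\lambda+1$ cancels except a single global $\phi_p(\lambda+1)$. Until that computation is exhibited, the proof has a genuine gap: the hard step has been named but not done. To complete it you should either carry the Gauss/Jacobi-sum manipulation all the way through, or more efficiently reduce the lemma to Greene's finite-field Clausen transformation together with Koike's identity expressing ${}_2F_1(\tfrac14,\tfrac14;1;\cdot)$-type sums as traces of Frobenius on a suitable elliptic curve, and then track the quadratic twists.
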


Finally, we relate the $H_p$ functions in Theorem~\ref{theorem1} to the elliptic curves $\widetilde{E}_{d,\lambda}.$

\begin{lemma}\label{HpLegendreRelation}
    If $p\geq 3$ is prime, $d\in\{2,3,4,6\},$ $\alpha_d=\{\frac{1}{2d},1-\frac{1}{2d},\frac{1}{2d}+\frac{1}{2},-\frac{1}{2d}+\frac{1}{2}\},$ and $\beta=\{1,\frac{1}{2},1,\frac{1}{2}\},$ then we have $H_p(\alpha_d,\beta|\lambda)=0$ when $\lambda$ is not a square in $\F_p.$ Moreover, if $\lambda^2\neq 0,1$ then we have
    \begin{equation}\label{HGFEC}
    H_p\left(\alpha_d,\beta|\lambda^2\right)= a_{p,d}(\lambda)+a_{p,d}(-\lambda).
    \end{equation}
\end{lemma}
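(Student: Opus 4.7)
The plan is to derive both parts from a duplication-of-parameters manipulation that converts a length-two hypergeometric sum at $\pm\lambda$ into the length-four sum at $\lambda^2$. The classical analogue is the identity
\[
{}_2F_1\!\left(\tfrac1d,\tfrac{d-1}{d};1;\lambda\right)+{}_2F_1\!\left(\tfrac1d,\tfrac{d-1}{d};1;-\lambda\right)=2\cdot{}_4F_3\!\left(\alpha_d;1,\tfrac12,\tfrac12;\lambda^2\right),
\]
which follows from the Pochhammer duplication $(c)_{2j}=4^{j}(\tfrac{c}{2})_j(\tfrac{c+1}{2})_j$ applied to each upper parameter and to $(1)_{2j}$. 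Our task is to mirror this with Gauss sums, using the Hasse--Davenport product relation $g(\chi)g(\chi\phi)=\chi(4)^{-1}g(\phi)g(\chi^{2})$ (with $\phi$ the quadratic character) as the arithmetic replacement of the duplication formula.

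For the vanishing statement, I would observe that under $k\mapsto k+(p-1)/2$ each character $\omega^{k+(p-1)a_i}$ is twisted by $\phi$, so $a_i$ is shifted by $\tfrac12$ modulo $1$; because the multisets $\alpha_d$ and $\beta$ are each stable under $a\mapsto a+\tfrac12\pmod 1$, the product of Gauss sums in the definition of $H_p(\alpha_d,\beta|\lambda)$ is invariant under this shift. Since $n=4$ gives $(-1)^n=1$ and $\omega^{(p-1)/2}(\lambda)=\phi(\lambda)$, the sum equals $\phi(\lambda)$ times itself, which forces $H_p(\alpha_d,\beta|\lambda)=0$ whenever $\lambda$ is a non-square.

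For the main identity, assuming $\lambda\ne 0,\pm 1$, I would apply Lemma~\ref{generalizedKoike} at both $\lambda$ and $-\lambda$ to write $a_{p,d}(\lambda)+a_{p,d}(-\lambda)=H_p^{(2,1)}(\lambda)+H_p^{(2,1)}(-\lambda)$ with hypergeometric datum $\{\tfrac1d,\tfrac{d-1}{d}\},\{1,1\}$. Expanding as Gauss-sum sums over $k\in\{0,\dots,p-2\}$ and using $\omega^k(-1)=(-1)^k$, the factor $1+\omega^k(-1)$ kills odd $k$; reindexing the even terms $k=2j$ and noting that each even residue is hit twice as $j$ ranges over $\{0,\dots,p-2\}$, I get
\[
H_p^{(2,1)}(\lambda)+H_p^{(2,1)}(-\lambda)=\frac{1}{1-p}\sum_{j=0}^{p-2}\frac{g(\omega^{2j+(p-1)/d})\,g(\omega^{2j+(p-1)(d-1)/d})\,g(\omega^{-2j})^{2}}{g(\omega^{(p-1)/d})\,g(\omega^{(p-1)(d-1)/d})\,g(\omega^{0})^{2}}\,\omega^{j}(\lambda^{2}),
\]
after absorbing the factor of $2$.

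The last step is the Hasse--Davenport split. Applying the product relation with $\chi=\omega^{j+(p-1)/(2d)}$ writes $g(\omega^{2j+(p-1)/d})$ as a constant multiple of $g(\omega^{j+(p-1)/(2d)})\,g(\omega^{j+(p-1)(\frac{1}{2d}+\frac12)})$; applied with $\chi=\omega^{j+(p-1)(d-1)/(2d)}$ it splits $g(\omega^{2j+(p-1)(d-1)/d})$ into $g(\omega^{j+(p-1)(\frac12-\frac{1}{2d})})\,g(\omega^{j+(p-1)(1-\frac{1}{2d})})$, producing exactly the four exponents in $\alpha_d$; and applied with $\chi=\omega^{-j}$ and $\chi=\omega^{0}$ on the denominator side it splits the $g(\omega^{-2j})^2/g(\omega^0)^2$ block into the four factors corresponding to $\beta=\{1,\tfrac12,1,\tfrac12\}$. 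I expect the main obstacle to be the bookkeeping of the auxiliary constants $\chi(4)^{-1}g(\phi)$ appearing in each Hasse--Davenport application, together with the analogous constants in the denominator normalizations: I would verify that the four numerator constants and four denominator constants cancel exactly (the $\chi(4)^{-1}$ pieces telescoping because the product of the four upper parameters is $2$ modulo $1$, and the $g(\phi)$ pieces cancelling four against four), after which the summand becomes precisely the summand in $H_p(\alpha_d,\beta|\lambda^{2})$, completing the proof.
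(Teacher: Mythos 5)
Your proposal is correct and follows the same mathematical route as the paper: reduce (\ref{HGFEC}) to the finite-field quadratic transformation equating $H_p(\alpha_d,\beta\,|\,\lambda^2)$ with $H_p(\{\tfrac1d,\tfrac{d-1}{d}\},\{1,1\}\,|\,\lambda)+H_p(\{\tfrac1d,\tfrac{d-1}{d}\},\{1,1\}\,|\,-\lambda)$ and then apply Lemma~\ref{generalizedKoike}. The only difference is that the paper cites Theorem 2.2, Corollary 2.3, and Lemma 2.6 of \cite{splittingRoots} for this transformation and for the vanishing on non-squares, whereas you re-derive both from scratch: the vanishing via the $k\mapsto k+\tfrac{p-1}{2}$ symmetry of the summand (valid because $\alpha_d$ and $\beta$ are each stable under $a\mapsto a+\tfrac12$ modulo $1$ and $\omega^{(p-1)/2}=\phi_p$), and the transformation via the Hasse--Davenport product relation, whose bookkeeping does close since the characters being halved in the numerator, namely $\omega^{j+(p-1)/(2d)}$, $\omega^{j+(p-1)(d-1)/(2d)}$, and $\omega^{-j}$ taken twice, multiply to $\phi_p$, so the $\chi(4)$ constants give $\phi_p(4)=1$ and the four $g(\phi_p)^{-1}$ factors cancel against the four from the denominator (and likewise for the $j$-independent normalizing Gauss sums). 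One minor inaccuracy worth fixing in your write-up: it is the \emph{sum} (not the product) of the entries of $\alpha_d$ that equals $2$, and the cancellation of the $\chi(4)$ constants is governed by the character product identity just noted rather than by that parameter sum directly.
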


\begin{proof}
    If $\lambda$ is not a square, the fact that $H_p(\alpha,\beta|\lambda)=0$ follows immediately from Theorem 2.2 and Lemma 2.6 of \cite{splittingRoots}.
    To prove (\ref{HGFEC}) when $\lambda^2\neq 0,1,$ note that Corollary 2.3 and Lemma 2.6 of \cite{splittingRoots} imply that
    $$
    H_p\left(\begin{array}{cccc}
    \frac{1}{2d} & 1-\frac{1}{2d} & \frac{1}{2d}+\frac{1}{2} & -\frac{1}{2d}+\frac{1}{2}  \\
     1 & \frac{1}{2} & 1 & \frac{1}{2}
\end{array}|\lambda^2\right) = H_p\left(\begin{array}{cc}
    \frac{1}{d} & 1-\frac{1}{d} \\
    1 & 1
\end{array}\mid\lambda\right)+H_p\left(\begin{array}{cc}
    \frac{1}{d} & 1-\frac{1}{d} \\
    1 & 1
\end{array}\mid-\lambda\right).
    $$
    On the other hand, Lemma~\ref{generalizedKoike} gives that
    $$
    H_p\left(\begin{array}{cc}
    \frac{1}{d} & 1-\frac{1}{d} \\
    1 & 1
\end{array}\mid\lambda\right) = a_{p,d}(\lambda)
    $$
    and (\ref{HGFEC}) follows.
\end{proof}

\section{Background from \'Etale Cohomology}\label{Section3}

In order to prove Theorems~\ref{theorem1} through \ref{theorem4}, we describe the traces $a_{E,p}(\lambda)$ of an elliptic curve $E/\F_p(\lambda)$ as traces of Frobenius morphisms on an $\ell$-adic \'etale sheaf. This perspective allows us to use the resuls and heavy machinery of \'etale cohomology which we assume the reader is familiar with.  In Subsection~\ref{SubsectionGenericEtale}, for the reader's convenience, we recall the necessary background from the theory of \'etale cohomology. In Subsection~\ref{EtaleSheafFEC} we recall some background on the sheaves which encode finite-field point counts on families of elliptic curves. Those sheaves ``are'' essentially the rational $\ell$-adic Tate modules when the family is viewed as an elliptic curve defined over the function field (see Section~\ref{Section6}).

\begin{remark}
    For background on the applications of \'etale cohomology in analytic number theory, the reader is referred to \cite{AppliedladicCohomology}, \cite{traceFunctionsFFApplications}, \cite{StudySumsProducts}, \cite{KatzESFDEC}. For a more geometric perspective, the reader is referred to \cite{ECMilneBook}.
\end{remark}

\subsection{\'Etale sheaves and monodromy groups}\label{SubsectionGenericEtale}

Here we recall results from the theory of \'etale cohomology. More precisely, the results we recall allow us to view powers of Frobenius traces at different fibers simultaneously, and to then rewrite the sums of those powers in terms of the traces on the so-called cohomology group. For example, in \cite{HGFDistributionOnoSaikia,ono2024distributionhessianvaluesgaussian,grove2024hypergeometricmomentshecketrace}, certain weighted sums of moments of Frobenius traces are expressed explicitly as Fourier coefficients of certain cusp forms. The first cohomology groups that appear here should be thought of as (see Remark~\ref{Hc1ModRemark}) the geometric generalization of these forms when one does not have an underlying modular setting. We closely follow \cite{AppliedladicCohomology}. In what follows in this subsection, let $p$ be a prime, $k=\F_p, K=\F_p(X)$ the field of rational functions over $k,$ and let $\ell\neq p$ be a prime.

Recall that if $U\subset\A_{\F_p}^1,$ then an $\ell$-adic sheaf $\mathcal{F}$ that is lisse on $U$ ``is'' (more accurately, see Section 1 of Chapter V of \cite{ECMilneBook}) a continuous finite-dimensional Galois representation
$$
\rho_{\mathcal{F}}:G = \Gal(K^{\sep}/K)\to\GL(\mathcal{F}_{\overline{\eta}}),
$$
where $\mathcal{F}_{\overline{\eta}}$ is a finite-dimensional $\Q_\ell$-vector space of dimension $\rank(\mathcal{F}),$ and where $\rho_{\mathcal{F}}$ is unramified at points $u\in U.$ To make this precise, recall that for every $\lambda\in\A_{\F_p}^1,$ we have a decomposition group $D_\lambda,$ an inertia group $I_\lambda,$ a residue field $k(\lambda) = \F_{p^{\deg(\lambda)}},$ and an exact sequence
\begin{equation}\label{DIExactSequence}
\begin{tikzcd}
0 \arrow[r] & I_\lambda \arrow[r] & D_\lambda \arrow[r] & \Gal(\overline{k(\lambda)}/k(\lambda)) \arrow[r] & 0.
\end{tikzcd}
\end{equation}
In this notation, we say that $\rho_{\mathcal{F}}$ is unramified at $\lambda$ if $I_\lambda$ acts trivially on $\mathcal{F}_{\overline{\eta}}.$ Furthermore, for each $\lambda\in\A_{\F_p}^1,$ we denote by $\Frob_{k(\lambda)}^{\arith}$ the Frobenius morphism
$$
\begin{tikzcd}[row sep=-4pt, column sep= normal]
 \Frob_{k(\lambda)}^{\arith}\colon\!  \overline{k} \arrow[r] & \overline{k} \\
\ \ \ \ \ \ \ \ \ \quad u \arrow[r, maps to] & u^{\deg(\lambda)}       
\end{tikzcd}
$$
and by $\Frob_{k(\lambda)}^{\geom}$ the inverse of $\Frob_{k(\lambda)}^{\arith}.$ In this notation, for each $\lambda\in\A_{\F_p}^1,$ we have a lift $\Frob_\lambda$ of $\Frob_{k(\lambda)}^{\geom}$ to an $I_\lambda$-class of $D_\lambda.$ Furthermore, we denote by $\Frob_p$ the geometric Frobenius on $\overline{k},$ namely, the inverse of the arithmetic Frobenius $a\to a^p.$

In order to prove Theorems~\ref{theorem1} and \ref{theorem2}, we will describe the moments of the values $a_{E,p}(\lambda)$ as sums of the form $\sum\limits_{\lambda\in U(k)}\tr(\Frob_\lambda|\mathcal{F}_{\overline{\eta}})$ where $\mathcal{F}$ is an appropriately chosen \'etale sheaf. To determine the asymptotics of these moments, we express the sums over $U(k)$ as sums of traces of $\Frob_p$ acting on cohomology groups associated to the sheaf $\mathcal{F}.$ 

To make this precise, if $U\subset\A_{k}^1,$ denote by $U_{\overline{k}}$ the base-change of $U$ to $\overline{k}.$ If $\mathcal{F}$ is an $\ell$-adic sheaf that is lisse on $U,$ we have the associated cohomology groups with compact support $H_c^i(U_{\overline{k}},\mathcal{F}).$ These groups are finite-dimensional $\Q_\ell$-representations of $\Gal(\overline{k}/k).$ In this notation, the Grothendieck--Lefschetz trace formula expresses the sums of traces of Frobenius over a sheaf as an alternating sum of traces of Frobenius on the cohomology groups.

\begin{theorem}[Th. 4.1 of \cite{AppliedladicCohomology}, Th. 13.4 of \cite{ECMilneBook}]\label{GrothendieckLefschetz}
    If $\mathcal{F}$ is lisse on $U,$ then
    $$
    \sum\limits_{\lambda\in U(k)}\tr(\Frob_\lambda|\mathcal{F}_{\overline{\eta}}) = \sum\limits_{i=0}^2 \tr(\Frob_p|H_c^i(U_{\overline{k}},\mathcal{F})).
    $$
\end{theorem}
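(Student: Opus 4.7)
The plan is to prove the Grothendieck--Lefschetz trace formula, whose general formulation (with alternating signs $(-1)^i$ absorbed in the statement) is a cornerstone of \'etale cohomology, proved in detail in the cited references \cite{AppliedladicCohomology, ECMilneBook}. First I would reduce from $\Q_\ell$-coefficients to torsion ones: the lisse $\Q_\ell$-sheaf $\mathcal{F}$ is obtained by tensoring with $\Q_\ell$ a compatible inverse system of lisse $\Z/\ell^n$-sheaves $\mathcal{F}_n$, and both sides of the trace formula for $\mathcal{F}$ are recovered from their $\mathcal{F}_n$-analogues by passing to the inverse limit. This step uses the finite generation of $H_c^i(U_{\overline{k}}, \mathcal{F}_n)$ over $\Z/\ell^n$ and the compatibility of the Frobenius actions under the transition maps.

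For each torsion $\mathcal{F}_n$, the identity follows from the general Grothendieck--Lefschetz formula applied to the constructible sheaf $\mathcal{F}_n$ on the separated finite-type $k$-scheme $U$. That general formula is proved by d\'evissage: one reduces to sheaves of the form $j_!\mathcal{L}$ for $j$ an open immersion of a smooth locally closed subscheme and $\mathcal{L}$ lisse, and then by induction on dimension via fibrations into curves. The base case is the Lefschetz fixed-point theorem applied to the graph of the Frobenius morphism, combined with proper and smooth base change and Poincar\'e duality. In our situation $U \subset \A_k^1$ is a smooth affine curve, so $H_c^i(U_{\overline{k}}, \mathcal{F}) = 0$ for $i \notin \{0, 1, 2\}$ by the cohomological-dimension bound $2\dim U$, and furthermore $H_c^0 = 0$ whenever $U$ is a nonempty proper open in a connected complete curve (any global section of $j_!\mathcal{F}$ on $\mathbb{P}^1$ must vanish along the boundary points, hence everywhere on $U$ as well).

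The main obstacle in a self-contained proof is the foundational six-functor formalism underlying the general trace formula --- a substantial body of Grothendieck--Deligne theory. For a more direct route specific to our narrow setting (a lisse sheaf on a smooth affine curve), one can identify $\mathcal{F}$ with a continuous representation $\rho$ of $\pi_1^{\text{\'et}}(U_{\overline{k}}, \overline{\eta})$, compute $H_c^2$ as the Tate-twisted coinvariants $\rho_{\pi_1}(-1)$ via Poincar\'e duality, and compute $H_c^1$ using the Euler--Poincar\'e formula together with Swan conductor corrections at the punctures of $U \subset \mathbb{P}^1$. The Frobenius traces on both sides are then matched by applying the Lefschetz fixed-point formula to the graph of Frobenius in $U \times U$. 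Even in this shorter approach, the principal technical subtlety is the precise bookkeeping of ramification contributions at the boundary points, for which the Grothendieck--Ogg--Shafarevich formula supplies the needed input.
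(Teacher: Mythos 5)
The paper does not prove this result; it quotes it from the cited references (Fouvry--Kowalski--Michel's survey and Milne's book), so there is no in-text proof to compare against. Your sketch is a reasonable summary of how the theorem is established there: reduction from $\Q_\ell$-coefficients to $\Z/\ell^n$-coefficients, the general constructible case via d\'evissage to extensions by zero of lisse sheaves on strata, and the Lefschetz--Verdier-type fixed-point argument applied to the Frobenius correspondence, with proper base change and Poincar\'e duality as the main ingredients. Your observation that $H_c^i$ vanishes outside degrees $0$--$2$ (and in fact also in degree $0$) for a lisse sheaf on a non-proper smooth curve is correct and explains the truncated range of the sum.

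One thing worth stating more bluntly than you do: the statement as written in the paper omits the alternating sign. The Grothendieck--Lefschetz trace formula reads
\begin{equation*}
\sum_{\lambda \in U(k)} \tr(\Frob_\lambda \,|\, \mathcal{F}_{\overline{\eta}}) \;=\; \sum_{i=0}^{2} (-1)^{i}\,\tr\bigl(\Frob_p \,|\, H_c^{i}(U_{\overline{k}},\mathcal{F})\bigr),
\end{equation*}
and the $(-1)^i$ cannot genuinely be ``absorbed'' by a sign convention, since the Frobenius action on each $H_c^i$ is canonical. You notice the anomaly but soften it; it is cleaner to say outright that the paper's display contains a typo. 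In the applications the paper makes --- Lemma~\ref{mixedMomNonTrivial}, where only $H_c^1$ survives and its trace is bounded in absolute value, and Proposition~\ref{mixedMoments}, where the main term comes from $H_c^2$ with coefficient $(-1)^2 = +1$ --- the missing sign is harmless, which is presumably why it went unnoticed.
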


In order to estimate the sums $\sum\limits_{i=0}^2 \tr(\Frob_p|H_c^i(U_{\overline{k}},\mathcal{F})),$ we require bounds on both the eigenvalues of Frobenius and on the dimension of the cohomology groups. To this end, let $\mathcal{F}$ be an $\ell$-adic sheaf lisse on $U$ with $\rank(\mathcal{F})=n.$ We say that $\mathcal{F}$ is punctually pure of weight $w$ if for each $\lambda\in U,$ the eigenvalues $\alpha_{\lambda,1},\ldots,\alpha_{\lambda,n}$ of $\Frob_\lambda$ are such that $|\iota(\alpha_{\lambda,i})|=p^{\deg(\lambda)\cdot w/2}$ for every embedding $\iota:\overline{\Q_\ell}\to\C.$ In this notation, the fundamental theorem of Deligne (see III of \cite{WeilII}) bounds the eigenvalues of Frobenius on the middle cohomology.

\begin{theorem}[Th. 4.6 of \cite{AppliedladicCohomology}]\label{DelignePurity}
    If $\mathcal{F}$ is an $\ell$-adic sheaf lisse on $U\subset\A_{k}^1$ that is punctually pure of weight $0$  and $\alpha$ is an eigenvalue of $\Frob_p|H_c^1(U_{\overline{k}},\mathcal{F}),$ then $|\iota(\alpha)|\leq p^{1/2}$ for all embeddings $\iota:\overline{\Q_\ell}\to\C.$
\end{theorem}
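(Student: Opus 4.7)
The plan is to obtain this as a direct corollary of Deligne's main theorem on weights from \cite{WeilII}. Specifically, Deligne established that if $X$ is a separated scheme of finite type over $\F_p$ and $\mathcal{F}$ is an $\ell$-adic sheaf on $X$ that is mixed of weights $\leq w$ --- meaning every Frobenius eigenvalue $\alpha$ at every closed point $x\in|X|$ satisfies $|\iota(\alpha)|\leq p^{\deg(x)\cdot w/2}$ under every embedding $\iota:\overline{\Q_\ell}\to\C$ --- then each compactly supported cohomology group $H_c^i(X_{\overline{k}},\mathcal{F})$ is mixed of weights $\leq w+i$. Applying this with $X=U$, $w=0$, and $i=1$ gives exactly the stated bound $|\iota(\alpha)|\leq p^{(0+1)/2}=p^{1/2}$ for the eigenvalues of $\Frob_p$ on $H_c^1(U_{\overline{k}},\mathcal{F})$.

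The remaining task is thus to outline the proof of Deligne's weight theorem on an affine curve, which is the only case we actually need. The starting point is the $L$-function identity
\begin{equation*}
L(U,\mathcal{F},T) := \prod_{x\in|U|}\det\bigl(1-T^{\deg(x)}\Frob_x\mid\mathcal{F}_{\overline{\eta}}\bigr)^{-1} = \prod_{i=0}^{2}\det\bigl(1-T\cdot\Frob_p\mid H_c^i(U_{\overline{k}},\mathcal{F})\bigr)^{(-1)^{i+1}},
\end{equation*}
which follows from the Grothendieck--Lefschetz trace formula (Theorem~\ref{GrothendieckLefschetz}) combined with the rationality of the zeta function of $\mathcal{F}$. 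The punctual purity hypothesis on $\mathcal{F}$ gives uniform estimates on the coefficients of the Euler product expansion on the left, and --- after easily controlling the contributions of $H_c^0$ and $H_c^2$ via geometric invariants and coinvariants of the arithmetic monodromy action --- this translates into bounds on the reciprocal roots of the characteristic polynomial $\det(1-T\Frob_p\mid H_c^1(U_{\overline{k}},\mathcal{F}))$.

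The main obstacle is to sharpen the naive estimate (which only yields something like $|\iota(\alpha)|\leq p$) to the sharp bound $p^{1/2}$. This is Deligne's decisive contribution, carried out via a geometric incarnation of \emph{Rankin's trick}: one replaces $\mathcal{F}$ by large even tensor powers $\mathcal{F}^{\otimes 2n}$, whose pointwise weights are amplified multiplicatively, and one then extracts sharp bounds by taking $2n$-th roots of the resulting coefficient estimates and letting $n\to\infty$. Essential auxiliary inputs are the Grothendieck--Ogg--Shafarevich Euler-Poincar\'e formula, which uniformly controls $\dim H_c^1$ in terms of generic rank, ramification, and Swan conductors at the boundary points of $U$, together with structural results on the decomposition of tensor powers of $\mathcal{F}$ under the geometric monodromy group. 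Once Deligne's theorem is in hand, the statement follows immediately as described in the first paragraph.
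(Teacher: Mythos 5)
The paper does not prove this statement at all: it is cited verbatim as Theorem~4.6 of \cite{AppliedladicCohomology}, which is in turn a specialization of Deligne's main theorem in \cite{WeilII}. So there is no internal proof to compare against.

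Your first paragraph is exactly the right reduction: the result is the $w=0$, $i=1$ case of Deligne's theorem that $H_c^i$ of a sheaf mixed of weight $\leq w$ is mixed of weight $\leq w+i$, and that deduction is complete and correct. Your second and third paragraphs then sketch the proof of Deligne's theorem itself on an affine curve. The broad shape --- Grothendieck--Lefschetz yields the $L$-function factorization, $H_c^0$ vanishes and $H_c^2$ is controlled via coinvariants, the Euler product and punctual purity give a naive weight bound of $p$ on the reciprocal roots, and Rankin-type amplification by tensor powers together with the Euler--Poincar\'e formula bounding $\dim H_c^1$ squeezes this to $p^{1/2}$ --- is a faithful account of Deligne's \emph{Weil~I} strategy, and it is in the spirit of the arguments that feed into \cite{WeilII}. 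Two small caveats: the actual Weil~II argument for a lisse sheaf on a curve is not literally a tensor-power Rankin trick on $\mathcal{F}$ itself but a more delicate argument via the monodromy filtration, determinant weights, and a Hadamard--de la Vall\'ee Poussin step (and Laumon later gave a different proof via the $\ell$-adic Fourier transform); and ``letting $n\to\infty$'' should be read as choosing $n$ large enough relative to the Euler--Poincar\'e bound, since for each fixed $n$ the estimate is finite. Neither of these affects the correctness of your reduction, only the fine details of the proof of the black-boxed theorem, which the paper (reasonably) does not attempt to reproduce.
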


It turns out that the extremal cohomology groups $H_c^0$ and $H_c^2$ are easy to express in terms of the representation of the geometric Galois group. To make this precise, let $G=\Gal(K^{\sep}/K).$ We then have an induced exact sequence
\begin{equation}\label{ArithGeoExactSequence}
\begin{tikzcd}
0 \arrow[r] & G^{\geom} \arrow[r] & G \arrow[r] & \Gal(\overline{k}/k) \arrow[r] & 0,
\end{tikzcd}
\end{equation}
where $G^{\geom} = \Gal(K^{\sep}/\overline{k}\cdot K)$ with $\overline{k}\cdot K$ the compositum of $\overline{k}$ and $K.$ We say that $\mathcal{F}$ is geometrically irreducible (resp. geometrically constant) if $\mathcal{F}_{\overline{\eta}}$ is irreducible (resp. trivial) as a representation of $G^{\geom}.$ Furthermore, if $\mathcal{F}$ is an $\ell$-adic sheaf lisse on $U$ and $w\in\Z,$ then up to taking a finite extension of $\Q_\ell,$ we denote by $\mathcal{F}(\frac{w}{2})$ the ``half''-Tate twist (see Remark 3.11 of \cite{AppliedladicCohomology}) of $\mathcal{F}$ by $\Q_\ell(\frac{w}{2}).$ In this notation, we have the following description of $H_c^0$ and $H_c^2.$

\begin{lemma}[Section 4.1 of \cite{AppliedladicCohomology}, Remark 2.4 of \cite{ECMilneBook}]\label{extremalCohomologyDescription}
    If $\mathcal{F}$ is an $\ell$-adic sheaf lisse on $U\subset\A_{k}^1,$ then we have that
    $$
    H_c^0(U_{\overline{k}},\mathcal{F}) = 0
    $$
    and 
    $$
    H_c^2(U_{\overline{k}},\mathcal{F}) = (\mathcal{F}_{\overline{\eta}})_{G^{\geom}}(-1),
    $$
    where $(\mathcal{F}_{\overline{\eta}})_{G^{\geom}}$ is the largest quotient module of $\mathcal{F}_{\overline{\eta}}$ on which $G^{\geom}$ acts trivially. In particular, if $\mathcal{F}$ is geometrically irreducible and nonconstant, then $H_c^2(U_{\overline{k}},\mathcal{F})=0.$
\end{lemma}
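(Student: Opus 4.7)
The plan is to treat the two vanishing/identification statements separately, using Poincaré duality on the smooth affine curve $U_{\overline{k}}$ as the main tool.

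For the statement $H_c^0(U_{\overline{k}},\mathcal{F})=0$, I would argue as follows. Since $\mathcal{F}$ is lisse on $U$, it corresponds to a continuous representation of the étale fundamental group $\pi_1(U_{\overline{k}},\overline{\eta})$, which here we identify with (a quotient of) $G^{\geom}$. The group $H_c^0(U_{\overline{k}},\mathcal{F})$ is the space of global sections of $\mathcal{F}$ with proper support. Because $U$ is a nonempty open subset of $\mathbb{A}^1_k$, the base change $U_{\overline{k}}$ is a smooth, connected, non-proper curve, and no nonzero section of a locally constant sheaf on such a space can have proper support. Hence $H_c^0(U_{\overline{k}},\mathcal{F})=0$.

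For the identification of $H_c^2$, I would apply Poincaré duality for étale cohomology on the smooth curve $U_{\overline{k}}$ of dimension one: there is a canonical isomorphism
\[
H_c^2(U_{\overline{k}},\mathcal{F}) \;\cong\; H^0\bigl(U_{\overline{k}},\mathcal{F}^{\vee}(1)\bigr)^{\vee}.
\]
Since $\mathcal{F}^\vee(1)$ is again lisse, its $H^0$ computes the $G^{\geom}$-invariants of the stalk at the geometric generic point, so
\[
H^0\bigl(U_{\overline{k}},\mathcal{F}^{\vee}(1)\bigr) \;=\; \bigl(\mathcal{F}_{\overline{\eta}}^{\vee}(1)\bigr)^{G^{\geom}}.
\]
Taking $\overline{\mathbb{Q}_\ell}$-linear duals converts invariants of $V^\vee$ into coinvariants of $V$ for any finite-dimensional representation $V$ of $G^{\geom}$. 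Applying this with $V=\mathcal{F}_{\overline{\eta}}(-1)$, and using $(V(1))^{\vee}=V^{\vee}(-1)$ to absorb the Tate twist, yields
\[
H_c^2(U_{\overline{k}},\mathcal{F}) \;\cong\; \bigl(\mathcal{F}_{\overline{\eta}}\bigr)_{G^{\geom}}(-1),
\]
as claimed.

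For the final assertion, suppose $\mathcal{F}$ is geometrically irreducible and geometrically nonconstant. The coinvariants $(\mathcal{F}_{\overline{\eta}})_{G^{\geom}}$ form the largest quotient on which $G^{\geom}$ acts trivially. Any such nonzero quotient would, by irreducibility, have to equal all of $\mathcal{F}_{\overline{\eta}}$, contradicting the assumption that $\mathcal{F}$ is nonconstant. Hence $(\mathcal{F}_{\overline{\eta}})_{G^{\geom}}=0$, and the Tate twist makes no difference, giving $H_c^2(U_{\overline{k}},\mathcal{F})=0$. The main thing to be careful about is bookkeeping for the Tate twist and the passage between invariants and coinvariants under duality; once Poincaré duality is invoked these are purely formal manipulations.
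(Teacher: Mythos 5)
Your proof is correct and is the standard argument: the paper states this lemma as a citation (Section 4.1 of Fouvry--Kowalski--Michel, Remark 2.4 of Milne) without its own proof, and the approach you give—connectedness and non-properness for $H_c^0$, then Poincar\'e duality plus the invariants/coinvariants duality $(W^G)^\vee\cong (W^\vee)_G$ for $H_c^2$, with the Tate twist handled by $(V(1))^\vee=V^\vee(-1)$—is exactly how those sources establish it. The only point worth making explicit is why the support argument works for $H_c^0$: the support of a section of a lisse sheaf is open and closed, so on the connected non-proper curve $U_{\overline{k}}$ a compactly supported section must be identically zero.
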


In our application of the Grothendieck--Lefschetz trace formula, the sheaves we consider are irreducible. In this case, Lemma~\ref{extremalCohomologyDescription} implies that we only require the dimension of the middle cohomology $H_c^1.$ The Grothendieck--Ogg--Shafarevich formula gives the alternating sum of the dimensions of the cohomology groups in terms of local data attached to the sheaf.

\begin{theorem}[Th. 4.2. of \cite{AppliedladicCohomology}]\label{EulerChar}
    If $\mathcal{F}$ is an $\ell$-adic sheaf lisse on $U\subset\A_{k}^1,$ then the Euler characteristic of $\mathcal{F}$ is given by
    $$
    \chi_c(U_{\overline{k}},\mathcal{F}):=\sum\limits_{i=0}^2(-1)^i\dim H_c^{i}(U_{\overline{k}},\mathcal{F}) = \rank(\mathcal{F})\cdot(2-\#(\mathbb{P}^1\setminus U)(\overline{k})) - \sum\limits_{\lambda\in(\mathbb{P}^1\setminus U)(\overline{k})}\Swan_\lambda(\mathcal{F}),
    $$
    where $\Swan_\lambda(\mathcal{F})$ is the Swan conductor of $\mathcal{F}$ at $\lambda.$ In particular, if $\mathcal{F}$ is irreducible and has at most tame ramification on $\mathbb{P}^1\setminus U,$ we have that
    $$
    \dim H_c^1(U_{\overline{k}},\mathcal{F})=\rank(\mathcal{F})\cdot (\#(\mathbb{P}^1\setminus U)(\overline{k})-2).
    $$
\end{theorem}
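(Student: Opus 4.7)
The plan is to deduce the stated formula from the general Euler--Poincar\'e formula of Grothendieck--Ogg--Shafarevich for constructible $\ell$-adic sheaves on a smooth projective curve. Let $j\colon U \hookrightarrow \mathbb{P}^1_{\overline{k}}$ be the open immersion with closed complement $Z = \mathbb{P}^1\setminus U$, and form the constructible extension $j_!\mathcal{F}$ on $\mathbb{P}^1$. Since $\mathbb{P}^1$ is proper, ordinary and compactly supported cohomology coincide on it, so
$$
\chi_c(U_{\overline{k}},\mathcal{F}) \;=\; \chi(\mathbb{P}^1_{\overline{k}},\, j_!\mathcal{F}),
$$
and the problem reduces to an Euler characteristic calculation on the compact curve $\mathbb{P}^1$.

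The general Euler--Poincar\'e formula states that for any constructible $\ell$-adic sheaf $\mathcal{G}$ on a smooth projective curve $X/\overline{k}$ with generic rank $n_{\mathcal{G}}$,
$$
\chi(X,\mathcal{G}) \;=\; n_{\mathcal{G}}\cdot\chi(X) \;-\; \sum_{x\in |X|}\bigl(a_x(\mathcal{G}) + \Swan_x(\mathcal{G})\bigr),
$$
where the drop $a_x(\mathcal{G}) = n_{\mathcal{G}} - \dim \mathcal{G}_{\overline{x}}^{I_x}$ measures the failure of $\mathcal{G}$ to be lisse at $x$. Applied to $\mathcal{G}=j_!\mathcal{F}$ on $X=\mathbb{P}^1$: at $x\in U$, the sheaf $j_!\mathcal{F}$ agrees with the lisse $\mathcal{F}$, so $a_x = \Swan_x = 0$; at $x\in Z$ the stalk $(j_!\mathcal{F})_{\overline{x}}=0$ gives $a_x = \rank(\mathcal{F})$, while $\Swan_x(j_!\mathcal{F}) = \Swan_x(\mathcal{F})$ because the Swan conductor depends only on the action of $I_x$ on the generic fibre. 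Using $\chi(\mathbb{P}^1)=2$, collecting these local contributions produces exactly the displayed formula.

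The main obstacle is, of course, establishing the general Euler--Poincar\'e formula itself, which requires the full theory of the Swan conductor via the upper-numbering ramification filtration on the local Galois group. The standard strategy, following Grothendieck and Raynaud, is: (i) reduce by d\'evissage to constructible sheaves supported at individual closed points, localizing the calculation; (ii) handle tame ramification via a topological argument (essentially the Euler characteristic of a punctured Riemann surface); (iii) reduce wild ramification to the rank $1$ case via a Brauer-type induction on local Galois representations, and dispose of the rank $1$ case using local class field theory. This is the substantive content of the theorem and is not attempted in self-contained form here.

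Granting the main formula, the ``in particular'' assertion is immediate. Tame ramification on $\mathbb{P}^1\setminus U$ forces $\Swan_\lambda(\mathcal{F})=0$ for every $\lambda\in(\mathbb{P}^1\setminus U)(\overline{k})$, so
$$
\chi_c(U_{\overline{k}},\mathcal{F})\;=\;\rank(\mathcal{F})\bigl(2-\#(\mathbb{P}^1\setminus U)(\overline{k})\bigr).
$$
Lemma~\ref{extremalCohomologyDescription} then provides $H_c^0(U_{\overline{k}},\mathcal{F})=0$ (since $U$ is affine) and, under the implicit assumption that an irreducible $\mathcal{F}$ is also geometrically nonconstant, $H_c^2(U_{\overline{k}},\mathcal{F})=0$; combining with the definition of $\chi_c$ yields $\dim H_c^1(U_{\overline{k}},\mathcal{F}) = -\chi_c(U_{\overline{k}},\mathcal{F})$, which is the stated dimension formula.
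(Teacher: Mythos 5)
The paper does not prove this statement; it cites it verbatim as Theorem 4.2 of \cite{AppliedladicCohomology} (i.e.\ the Grothendieck--Ogg--Shafarevich formula specialized to an open $U\subset\mathbb{P}^1$ and a lisse sheaf). So there is no internal proof to compare against, and your reduction is the standard one: identify $\chi_c(U_{\overline{k}},\mathcal{F})$ with $\chi(\mathbb{P}^1_{\overline{k}},j_!\mathcal{F})$ by properness, apply the Euler--Poincar\'e formula for constructible sheaves on a smooth projective curve, and observe that the extension-by-zero contributes a full drop $\rank(\mathcal{F})$ and an unchanged Swan conductor at each point of $Z=\mathbb{P}^1\setminus U$. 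That yields exactly the displayed formula with $\chi(\mathbb{P}^1)=2$. Your honest deferral of the underlying Euler--Poincar\'e formula itself (Brauer induction, local class field theory for the rank-one wild case) is the correct posture here; that is precisely the content that the paper is importing by citation.

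Two small remarks. First, in your statement of the general formula the drop term should read $a_x(\mathcal{G})=n_{\mathcal{G}}-\dim\mathcal{G}_{\overline{x}}$, not $n_{\mathcal{G}}-\dim\mathcal{G}_{\overline{x}}^{I_x}$: the stalk at a closed geometric point is not itself an $I_x$-module, and the two expressions happen to coincide only because $(j_!\mathcal{F})_{\overline{x}}=0$ for $x\in Z$, so it does not affect your computation but is worth getting right. Second, you are right to flag that the ``in particular'' clause requires $H_c^2(U_{\overline{k}},\mathcal{F})=0$, which by Lemma~\ref{extremalCohomologyDescription} means $\mathcal{F}$ must be \emph{geometrically} irreducible and geometrically nonconstant; the word ``irreducible'' in the statement is a mild abbreviation for this, and in the paper's actual applications (e.g.\ Lemma~\ref{mixedMomNonTrivial}) the sheaves $\pi$ are geometrically irreducible and nontrivial by construction, so the hypothesis is satisfied where it is used.
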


In order to prove our distribution results, we will show that the sheaves we consider are irreducible as representations of $G^{\geom}.$ To this end, we require the notion of geometric monodromy group. More precisely, if $\mathcal{F}$ is a lisse sheaf on $U$ associated to the representation $\rho_\mathcal{F},$ then the geometric monodromy group $G_{\geom}$ is the Zariski closure (for the definition and properties of Zariski closure, see Section 7.1 of \cite{KowalskiRep}) of $\rho_{\mathcal{F}}(G^{\geom})$ in $\GL(\mathcal{F}_{\overline{\eta}}).$ The sheaves we consider for the computation of the mixed moments arise as tensor products (in the representation-theoretic sense) of two sheaves. A special case of the Goursat--Kolchin--Ribet criterion allows us to compute the geometric monodromy groups of these tensor products.

\begin{theorem}[Th. 13.4 of \cite{AppliedladicCohomology}, 1.8.2 of \cite{KatzESDE}]\label{GKRCriterion}
    Let $\mathcal{F}_1$ and $\mathcal{F}_2$ be two generically rank-$2$ geometrically irreducible $\ell$-adic sheaves such that the following hold:
    \begin{enumerate}
        \item $\mathcal{F}_1$ and $\mathcal{F}_2$ are punctually pure of weight $0$ and self-dual (as representations).
        \item The geometric monodromy groups of $\mathcal{F}_1$ and $\mathcal{F}_2$ are $\SL_2.$
        \item For any rank one sheaf $\ell$-adic sheaf $\mathcal{L},$ we have that $\mathcal{F}_1$ is not geometrically isomorphic to $\mathcal{F}_2\otimes\mathcal{L},$ that is, not isomorphic as representations of $G^{\geom}$.
    \end{enumerate}
    Then, the geometric monodromy group of the sheaf $\mathcal{F}_1\bigoplus\mathcal{F}_2$ is $\SL_2\times\SL_2.$
\end{theorem}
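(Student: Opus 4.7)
The plan is to identify the geometric monodromy group $G := G_{\geom}(\mathcal{F}_1\oplus\mathcal{F}_2)$ as a closed algebraic subgroup of $\SL_2\times\SL_2$ whose projections onto each factor are surjective (by hypothesis (2)), and then use Goursat's lemma to classify proper such subgroups, showing that each possibility violates hypothesis (3). Hypothesis (1), specifically the self-duality and weight-zero purity, is what forces each representation $\rho_i$ to land in $\Sp_2 = \SL_2$ so that the ambient product makes sense; it also rules out the trick of absorbing an unwanted rank-$1$ twist into a redefinition of the $\rho_i$.

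By Goursat's lemma in the algebraic-group setting, any closed subgroup $G\le\SL_2\times\SL_2$ with surjective projections is specified by closed normal subgroups $N_1,N_2\triangleleft \SL_2$ together with an algebraic isomorphism $\phi:\SL_2/N_1\xrightarrow{\sim}\SL_2/N_2$ whose graph is $G/(N_1\times N_2)$. Over our algebraically closed coefficient field (characteristic $0$), the only closed normal algebraic subgroups of $\SL_2$ are $\{1\}$, the center $\mu_2$, and $\SL_2$ itself, and structural/dimension considerations rule out the mixed cases. If $G$ is proper, the only remaining possibilities are (i) $N_1=N_2=\{1\}$ with $\phi\in\mathrm{Aut}(\SL_2)$, or (ii) $N_1=N_2=\mu_2$ with $\phi\in\mathrm{Aut}(\mathrm{PSL}_2)$. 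Since $\mathrm{Out}(\SL_2)$ and $\mathrm{Out}(\mathrm{PSL}_2)$ are trivial over an algebraically closed field of our characteristic, $\phi$ is inner in both cases, induced by conjugation by some $\bar g\in\mathrm{PGL}_2$.

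Finally I would translate this back into a sheaf-theoretic statement. In case (i) one gets $\rho_1\cong\rho_2$ as $G^{\geom}$-representations after conjugating by a lift of $\bar g$, hence $\mathcal{F}_1\cong\mathcal{F}_2$, i.e.\ $\mathcal{F}_1\cong\mathcal{F}_2\otimes\mathcal{L}$ with $\mathcal{L}$ the trivial rank-$1$ sheaf; in case (ii) the lifted relation $\rho_1\cong (\mathrm{Inn}(g)\circ\rho_2)\otimes\chi$ holds for some continuous character $\chi:G^{\geom}\to\mu_2$, and $\chi$ corresponds via the standard equivalence to a rank-$1$ lisse sheaf $\mathcal{L}$ realizing $\mathcal{F}_1\cong\mathcal{F}_2\otimes\mathcal{L}$. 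Either conclusion contradicts hypothesis (3). The main obstacle I anticipate is the lifting step in case (ii): one must check that the scalar ambiguity between two $\SL_2$-lifts of the projective isomorphism $\overline{\rho_1}=\mathrm{Inn}(\bar g)\circ\overline{\rho_2}$ defines an honest continuous homomorphism into $\mu_2$ rather than just a $2$-cocycle (here $\det\rho_i=1$ constrains the scalars to square roots of unity, eliminating the cohomological obstruction), and that this character is realized by a genuine $\ell$-adic rank-$1$ lisse sheaf on the common open set where both $\mathcal{F}_i$ are lisse.
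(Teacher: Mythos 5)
The paper does not prove this result; it is cited to \cite{AppliedladicCohomology} and \cite{KatzESDE}, and the proofs there run exactly along the Goursat-lemma lines you sketch, so your argument is correct and follows the standard approach. The ``main obstacle'' you anticipate in case (ii) actually dissolves on inspection: defining $\chi(\gamma)$ by $\rho_1(\gamma)=\chi(\gamma)\,g\rho_2(\gamma)g^{-1}$, composing this identity for $\gamma$ and $\gamma'$ and comparing with the identity for $\gamma\gamma'$ shows directly that $\chi$ is an honest homomorphism (not merely a $2$-cocycle), and $\det\rho_1=\det\rho_2=1$ forces $\chi$ to take values in $\mu_2$; continuity is automatic as a ratio of continuous matrix entries. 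As for realizing $\chi$ as a sheaf, hypothesis (3) concerns isomorphism as representations of $G^{\geom}$, and any continuous finite-order character of $G^{\geom}$ already \emph{is} a rank-one lisse sheaf on the geometric open set $U_{\overline{k}}$, so no descent or extension question needs to be settled before invoking (3) to reach the contradiction.
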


Finally, note that in Lemma~\ref{hypergeometricClausen}, the relation between the $H_p(\lambda)$ function and the symmetric square of $a_p^{\Cl}(\lambda)$ involves a Legendre character $\phi_p.$ This meshes well with the framework of \'etale cohomology as the Kummer sheaf has traces equal to values of multiplicative characters. 

\begin{lemma}\cite[Section 3.4 and Example 4.2.2]{AppliedladicCohomology}\label{KummerSheafLemma}
    Let $p\neq \ell$ be a prime$ ,\chi:\F_p^\times\to\C^\times$ a character on $\F_p$ and $b\in\F_p.$ Then there exists a rank-$1$ $\ell$-adic sheaf $[+b]^\ast\mathcal{L}_{\chi}$, pure of weight $0$, lisse on $\mathbb{P}_{\F_p}^1\setminus\{b,\infty\},$ and tamely ramified at $b$ and $\infty,$ such that
    $$
    \tr(\Frob_\lambda|[+b]^\ast\mathcal{L}_\chi) = \chi(\lambda+b)
    $$
    for all $\lambda\in\mathbb{P}_{\F_p}^1\setminus\{b,\infty\}.$
\end{lemma}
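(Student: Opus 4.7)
The plan is to construct $\mathcal{L}_\chi$ on $\mathbb{G}_{m,\F_p}$ via the Kummer covering, verify the listed properties there, and then transport the sheaf by a translation to obtain $[+b]^*\mathcal{L}_\chi$.

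First I would build the sheaf $\mathcal{L}_\chi$ directly on $\mathbb{G}_{m,\F_p}$. Let $n$ be the order of $\chi$; since $\chi$ is a character of $\F_p^\times$, we have $n \mid p-1$, so $\gcd(n,p)=1$. The $n$-th power map $[n]\colon \mathbb{G}_{m,\F_p}\to\mathbb{G}_{m,\F_p}$, $x\mapsto x^n$, is a finite étale Galois covering whose Galois group is canonically $\mu_n(\F_p)\cong \F_p^\times/(\F_p^\times)^n$. The pushforward $[n]_*\Q_\ell$ is a lisse sheaf of rank $n$ on $\mathbb{G}_{m,\F_p}$ which, after extending scalars to a finite extension of $\Q_\ell$ containing the $n$-th roots of unity, decomposes canonically as a direct sum indexed by characters of $\mu_n$. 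Define $\mathcal{L}_\chi$ to be the direct summand on which $\mu_n$ acts via $\chi$ (viewed as a character of $\mu_n$ through the isomorphism above). It is lisse of rank $1$ on $\mathbb{G}_{m,\F_p}$.

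Next I would verify the three properties of $\mathcal{L}_\chi$ on $\mathbb{G}_m$. For the trace formula, at a point $\lambda\in\F_p^\times$, the geometric fiber of $[n]_*\Q_\ell$ at $\lambda$ is $\bigoplus_{y^n = \lambda}\Q_\ell$, and the geometric Frobenius $\Frob_\lambda$ permutes this basis; taking the $\chi$-isotypic component and computing the trace yields $\chi(\lambda)$ by a standard Gauss-type calculation, which is precisely the content of Example 4.2.2 of \cite{AppliedladicCohomology}. Purity of weight $0$ then follows automatically from $|\iota(\chi(\lambda))| = 1$ for every embedding $\iota$. Tame ramification at $0$ and $\infty$ follows because the Kummer cover $[n]$ is tamely ramified there ($n$ is prime to $p$), so the wild inertia subgroups of $I_0$ and $I_\infty$ act trivially on $\mathcal{L}_\chi$.

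Finally, I would pull back by the translation morphism. Let $\tau_b\colon \mathbb{A}^1_{\F_p}\to\mathbb{A}^1_{\F_p}$ be an appropriate translation (sending the ramification locus $\{0,\infty\}$ of $\mathcal{L}_\chi$ to $\{b,\infty\}$), and set $[+b]^*\mathcal{L}_\chi := \tau_b^*\mathcal{L}_\chi$. Pullback under an étale morphism preserves being lisse, rank $1$, punctually pure of weight $0$, and tamely ramified (with ramification loci transported accordingly). The trace identity transforms as $\tr(\Frob_\lambda \mid [+b]^*\mathcal{L}_\chi) = \tr(\Frob_{\tau_b(\lambda)}\mid \mathcal{L}_\chi) = \chi(\lambda+b)$, giving the stated formula on the complement of $\{b,\infty\}$ in $\mathbb{P}^1_{\F_p}$.

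The only subtle step is the trace computation on $\mathcal{L}_\chi$, which is genuinely the content of the lemma; everything else (purity, tameness, behavior under translation) is a formal consequence of the construction, so I do not expect any real obstacle beyond carefully invoking the cited references \cite{AppliedladicCohomology} for the Kummer sheaf trace identity.
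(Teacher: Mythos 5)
The paper does not give its own proof of this lemma; it cites the reference, and your argument is precisely the standard construction carried out there: take the $\chi$-isotypic summand of $[n]_*\Q_\ell$ under the $n$-th power cover of $\mathbb{G}_{m,\F_p}$, then transport by a translation. So the approach matches.

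One direction is reversed, though. With $\tau_b(\lambda)=\lambda+b$, which is forced by your trace identity $\tr(\Frob_\lambda\,|\,\tau_b^*\mathcal{L}_\chi)=\chi(\lambda+b)$, pullback pulls \emph{back} the ramification locus: $\tau_b^*\mathcal{L}_\chi$ is lisse on $\tau_b^{-1}(\mathbb{G}_m)=\mathbb{P}^1_{\F_p}\setminus\{-b,\infty\}$, not on $\mathbb{P}^1_{\F_p}\setminus\{b,\infty\}$. Your parenthetical that $\tau_b$ ``send[s] the ramification locus $\{0,\infty\}$ to $\{b,\infty\}$'' has the image/preimage roles switched; you cannot simultaneously have the trace formula $\chi(\lambda+b)$ and ramification at $b$. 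This imprecision is also present in the paper's statement of the lemma, which should read lisse on $\mathbb{P}^1_{\F_p}\setminus\{-b,\infty\}$; that is consistent with how the lemma is actually used later (the case $b=1$, where the Clausen family degenerates at $\lambda=-1$ and $\phi_p(\lambda+1)$ vanishes there). The rest of the verification (rank $1$, purity of weight $0$ from $|\chi|=1$, tameness at $0,\infty$ since $\gcd(n,p)=1$, and stability of all of these under pullback by an automorphism of $\mathbb{P}^1$) is correct.
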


\subsection{\'Etale sheaves arising from families of elliptic curves}\label{EtaleSheafFEC}

In his proof of equidistribution of traces of Frobenius for a generic family of elliptic curves \cite[Section 3]{RangMoyen}, Michel considers certain \'etale sheaves which contain information about those traces. Here, we recall the construction and properties of these sheaves for the reader's convenience.

Let $E_{\lambda}$ be a family of elliptic curves defined by the affine variety
$$
E_{\lambda}:\ \ \ y^2+a_1(\lambda)xy+a_3(\lambda)y=x^3+a_2(\lambda)x^2+a_4(\lambda)x+a_6(\lambda)\ \text{ with } \ a_1,\ldots,a_6\in\Z[\lambda],
$$
equipped with the projection map
$$
\begin{tikzcd}[row sep=-4pt, column sep= normal]
 \pi\colon\!\ \ \ \ \ \   E_\lambda \arrow[r] & \A_\Z^1 \\
\ \ \ \ \ \ \quad (x,y,\lambda) \arrow[r, maps to] & \lambda .   
\end{tikzcd}
$$
In this notation, we define $\mathcal{F}:=(R^1\pi_!\Q_\ell)(\frac{1}{2})$ to be the Tate twist of the first higher direct image with proper support of the constant sheaf on $E_\lambda$ (for the definition of higher direct images with proper support, see Chapter VI Section 3 of \cite{ECMilneBook}). 

We first recall some elementary properties of $\mathcal{F}.$

\begin{lemma}[Section 3 of \cite{RangMoyen}]\label{FLisse}
    Let $\Delta(\lambda)$ be the discriminant of $E_\lambda$ and let $\Delta^1(\lambda):=\frac{\Delta(\lambda)}{\gcd(\Delta(\lambda),\Delta'(\lambda))}.$ Furthermore, let $N\geq 1$ such that the leading coefficient of $\Delta^1(\lambda)$ is a unit in $\Z[\lambda,\frac{1}{N}].$ Then,  we have that $\rank(\mathcal{F})=2$ and that $\mathcal{F}$ is lisse on
    $$
    U_E=\Z\left[\lambda,\frac{1}{6\ell N\Delta^1(\lambda)}\right].
    $$
\end{lemma}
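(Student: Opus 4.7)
My plan follows Michel's original argument: compactify the family, apply the smooth and proper base change theorem, and then compare $R^1\pi_!\Q_\ell$ with the analogous direct image of the compactification via the excision sequence for the section at infinity. Using that $6$ is a unit in the ring defining $U_E$, I would place $E_\lambda$ in short Weierstrass form $y^2 = x^3 + A(\lambda)x + B(\lambda)$ and take its projective closure $\overline{E}_\lambda \subset \mathbb{P}^2_\Z \times \A^1_\Z$, equipped with the projection $\bar\pi\colon \overline{E}_\lambda \to \A^1$. The complement $Z := \overline{E}_\lambda \setminus E_\lambda$ is the image of the section $\sigma\colon \A^1 \to \overline{E}_\lambda$, $\lambda \mapsto [0:1:0]$. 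Up to a unit in $\Z[1/6]$, the discriminant of $\bar\pi$ equals $\Delta(\lambda)$, so in residue characteristics coprime to $6$ the singular fibers of $\bar\pi$ lie exactly above $V(\Delta) = V(\Delta^1) \subset \A^1$. The auxiliary factor $N$ is included so that the leading coefficient of $\Delta^1$ is a unit, ensuring $V(\Delta^1) \subset \A^1_{\Z[1/(6\ell N)]}$ is a flat relative divisor and that inverting $\Delta^1$ on $U_E$ genuinely removes precisely the singular fibers of $\bar\pi$.

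Since $\ell$ is inverted on $U_E$, I can then apply the smooth and proper base change theorem to $\bar\pi$ over $U_E$ with coefficients in $\Q_\ell$, concluding that each $R^k \bar\pi_* \Q_\ell$ is lisse on $U_E$ with stalks $H^k(\overline{E}_{\bar\lambda}, \Q_\ell)$ at geometric points. As every geometric fiber is a smooth projective genus-$1$ curve, this yields $R^0\bar\pi_* \Q_\ell = \Q_\ell$ and $\rank(R^1\bar\pi_* \Q_\ell) = 2$.

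To transfer this information to $R\pi_!$, I would apply $R\bar\pi_!$ to the excision triangle $j_!\Q_\ell \to \Q_\ell \to \iota_*\Q_\ell$ on $\overline{E}_\lambda$, where $j\colon E_\lambda \hookrightarrow \overline{E}_\lambda$ and $\iota\colon Z \hookrightarrow \overline{E}_\lambda$ are the open and closed immersions. Using $R\bar\pi_! = R\bar\pi_*$ (since $\bar\pi$ is proper) together with $\bar\pi \circ \iota = \mathrm{id}_{\A^1}$ gives on $U_E$ the long exact sequence
\[
0 \to R^0\pi_!\Q_\ell \to R^0\bar\pi_*\Q_\ell \to \Q_\ell \to R^1\pi_!\Q_\ell \to R^1\bar\pi_*\Q_\ell \to 0.
\]
The fibers of $\pi$ are smooth affine curves of positive dimension, so $R^0\pi_!\Q_\ell = 0$; and the map $R^0\bar\pi_*\Q_\ell = \Q_\ell \to \Q_\ell$ is the identity induced by pullback along $\sigma$. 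The right half of the sequence then produces an isomorphism $R^1\pi_!\Q_\ell \cong R^1\bar\pi_*\Q_\ell$ on $U_E$, so $R^1\pi_!\Q_\ell$ is lisse of rank $2$, and the Tate twist by $1/2$ preserves both lisseness and rank.

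The main obstacle will be the careful control of the arithmetic base $\Spec \Z[\lambda, 1/(6\ell N \Delta^1(\lambda))]$ rather than working over a fixed field: one must confirm that inverting the auxiliary integer $N$ really does make $\bar\pi$ smooth over all of $U_E$, with no hidden singular fibers coming from small-prime reductions or from the leading-coefficient behavior of $\Delta^1$. Once this is in place, the two invocations of smooth-proper base change and excision are essentially formal.
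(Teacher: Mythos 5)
The paper gives no proof of this lemma; it simply attributes it to Section 3 of Michel's paper. Your proposal reconstructs the standard argument from that source — short Weierstrass form after inverting $6$, projective compactification, smooth and proper base change, and the excision triangle $j_!\Q_\ell\to\Q_\ell\to\iota_*\Q_\ell$ reducing $R^1\pi_!$ to $R^1\bar\pi_*$ — and the argument is correct. One small thing worth making explicit, since you flagged control of the arithmetic base as the main obstacle: the reason $\Delta^1$ rather than $\Delta$ appears in $U_E$ is that $\Delta^1$ is the squarefree part, so $V(\Delta)=V(\Delta^1)$ as sets over any field; but to guarantee that the reduction mod $p$ of $\Delta^1$ computed over $\Q[\lambda]$ still cuts out exactly the bad fibers mod $p$ (no degree drop, no spurious multiple roots, no collisions), one may need to further enlarge $N$ beyond the leading-coefficient condition, which is permitted by the statement and consistent with your reading. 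With that clarification, your excision computation is precisely the right one: $R^0\pi_!\Q_\ell=0$ for affine curve fibers, the map $R^0\bar\pi_*\Q_\ell\to\Q_\ell$ from the section at infinity is an isomorphism, hence $R^1\pi_!\Q_\ell\cong R^1\bar\pi_*\Q_\ell$, which is lisse of rank $2$ by smooth-proper base change, and the half-Tate twist changes neither lisseness nor rank.
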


Now denote by $U_{p}$ and $\mathcal{F}_p$ the base-change of $U_E$ and $\mathcal{F}$ to $\F_p.$ Then, $\mathcal{F}_p$ is an $\ell$-adic sheaf, lisse on $U_p,$ pure of weight $0,$ and for $\lambda\in U_p,$ we have
\begin{equation}\label{traceF}
\tr(\Frob_\lambda|\mathcal{F}_{\overline{\eta}}) = \frac{a_{E,p}(\lambda)}{\sqrt{p}},
\end{equation}
where $a_{E,p}(\lambda) = p+1-\#E_\lambda(\F_p).$ In order to compute the moments and mixed moments, we require the geometric monodromy group, the self-duality, and the Euler characteristic of $\mathcal{F}.$

\begin{lemma}\label{FMonoChar}
    There exists $N\geq 1$ such that for all primes $p\nmid N,$ the following are true.
    \begin{enumerate}
        \item If the $j$-invariant $j(E_\lambda)$ is not constant as a function of $\lambda,$ then the geometric monodromy group of $\mathcal{F}_p$ is given by $G_{\geom}=\SL_2.$
        \item $\mathcal{F}_p$ is tamely ramified at the points of $\mathbb{P}^1_{\F_p}-U_p\times_{\Spec\F_p}\Spec\overline{\F_p}.$
        \item The sheaf $\mathcal{F}_p$ is self-dual.
    \end{enumerate}
\end{lemma}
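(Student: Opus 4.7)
The plan is to handle the three parts in reverse order: first (3), then (2), then (1), using each in turn. For (3), I would invoke relative Poincar\'e duality for the smooth proper morphism obtained by restricting $\pi$ to the open locus above $U_p$: the cup product $R^1\pi_!\Q_\ell \otimes R^1\pi_!\Q_\ell \to R^2\pi_!\Q_\ell \cong \Q_\ell(-1)$ is a perfect alternating pairing (fiberwise this is the Weil pairing on the rational $\ell$-adic Tate module of $E_\lambda$), so the Tate twist by $\tfrac{1}{2}$ on each factor gives a symplectic isomorphism $\mathcal{F}_p \cong \mathcal{F}_p^{\vee}$.

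For (2), I would enlarge $N$ to be divisible by $6$. For a prime $p \nmid N$ and a closed point $\lambda_0 \in \mathbb{P}^1_{\F_p} \setminus U_p$, the local inertia $I_{\lambda_0}$ acts on $\mathcal{F}_{p,\overline{\eta}}$ through the action on the rational $\ell$-adic Tate module of $E_\lambda$ viewed over the complete local field at $\lambda_0$. In the potentially multiplicative case, the theory of the Tate curve shows the action is unipotent up to an at-worst quadratic character, hence tame. In the potentially good case, a theorem of Serre--Tate combined with the N\'eron--Kodaira classification shows the action factors through a finite cyclic quotient of order dividing $24$, so that for $p \nmid 6$ the wild pro-$p$ inertia acts trivially.

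For (1), I would combine (3) with standard structure theory for algebraic subgroups of $\SL_2$. Self-duality with respect to an alternating form forces $G_{\geom} \subseteq \Sp_2 = \SL_2$, and a proper algebraic subgroup of $\SL_2$ is either finite, has identity component a maximal torus, or is contained in a Borel subgroup. Since $j(E_\lambda)$ is nonconstant, it has a pole at some $\lambda_0 \in \mathbb{P}^1(\overline{\F_p})$; at such $\lambda_0$ the family has potentially multiplicative reduction, and the Tate curve gives a nontrivial unipotent local monodromy (possibly twisted by a quadratic character). Such a unipotent element lies in no finite subgroup and no maximal torus, ruling out the first two possibilities. If $G_{\geom}^0$ were contained in a Borel, then $\mathcal{F}_p$ would be geometrically reducible, yielding by self-duality a short exact sequence $0 \to \mathcal{L} \to \mathcal{F}_p \to \mathcal{L}^{-1} \to 0$ with $\mathcal{L}$ a rank-$1$ lisse sheaf; restricting to a punctured neighborhood of $\lambda_0$ forces $\mathcal{L}$ to be preserved by the unipotent Jordan block monodromy, and a global analysis of the tame characters of $\mathcal{L}$ using the Euler characteristic formula (Theorem~\ref{EulerChar}) and Deligne's purity (Theorem~\ref{DelignePurity}) contradicts the compatibility of local monodromy data at $\lambda_0$. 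Hence $G_{\geom} = \SL_2$.

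The main obstacle is ruling out the reducible (Borel) case in (1): the unipotent local monodromy argument handles the finite and toric cases immediately, but excluding a rank-$1$ invariant subsheaf requires careful bookkeeping of the local characters of $\mathcal{L}$ at every point of bad reduction and a global weight/rank count. This analysis is standard and appears in Michel's \cite{RangMoyen}, which I would follow to close the argument.
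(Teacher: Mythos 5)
Your proposal is essentially the same route as the paper's, which is itself only a brief sketch. For part (3), the paper first applies smooth base change to identify $(\mathcal{F}_p)_{\overline{\eta}}$ with $H^1_c$ of the generic fiber $E_\lambda/\F_p(\lambda)$ and then invokes Poincar\'e duality (the Weil pairing); you run relative Poincar\'e duality directly on the fibration over $U_p$, which is the same fact packaged one step earlier. For parts (1) and (2), the paper simply cites Michel's Lemma 3.1 of \cite{RangMoyen}, and you end up doing the same after sketching the intermediate steps. Your filled-in detail for (2) and the finite/torus exclusions in (1) are correct. Two small remarks on (1): first, the minor imprecision ``$G_{\geom}^0$ contained in a Borel'' should really be ``$G_{\geom}$ contained in a Borel,'' but this is harmless since, once the finite and normalizer-of-torus cases are excluded, a proper algebraic subgroup of $\SL_2$ whose identity component is unipotent or is a full Borel has all of $G_{\geom}$ inside a Borel. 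Second, the ``careful bookkeeping of local characters'' you worry about in the Borel case can be bypassed entirely: a lisse sheaf pure of weight $0$ is geometrically semisimple by Deligne's Weil~II, so geometric reducibility would force $\mathcal{F}_p$ to split as a direct sum of two rank-$1$ sheaves, putting $G_{\geom}$ inside a maximal torus and contradicting the nontrivial unipotent local monodromy at a pole of $j$. That closes (1) without any global character analysis and is cleaner than the route you outline (and than the route in \cite{RangMoyen}).
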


\begin{proof}[Sketch of proof]
    Parts (1) and (2) are Lemma 3.1 of \cite{RangMoyen}. To see part (3), note that the smooth base change theorem (see Chapter VI, Corollary 4.2 of \cite{ECMilneBook}) implies that $(\mathcal{F}_p)_{\overline{\eta}}$ is isomorphic as a Galois representation to the $H_c^1(E_\lambda,\Q_\ell)\left(\frac{1}{2}\right),$ where $E_\lambda$ is viewed as an elliptic curve over $\F_p(\lambda).$ The self-duality then follows by Poincar\'e duality (see Chapter VI, Corollary 11.2 of \cite{ECMilneBook}) which in this case is nothing but the Weil pairing.
\end{proof}

Finally, in order to apply Theorem~\ref{GKRCriterion} in the case of generic pairs of families of elliptic curves, it is useful to compute the action of $I_\lambda$ on $\mathcal{F}_{\overline{\eta}},$ where $\lambda\in\A_{\F_p}$ is a point of bad reduction of $E_\lambda.$

\begin{lemma}[p. 135 of \cite{RangMoyen}, Section 4 of \cite{WeilII}]\label{InertiaAction}
    Let $\lambda\in\A_{\F_p}$ such that the fiber $E_\lambda$ has multiplicative reduction. Then, in some basis, the representation restricted to $I_\lambda$ is given by
    $$
    \rho_{\mathcal{F}}(\gamma) = \begin{pmatrix}
        1 & t_\ell(\gamma) \\
        0 & 1
    \end{pmatrix},
    $$
    where $t_\ell$ is the canonical surjection from $I_\lambda$ to $\Z_\ell(1).$
\end{lemma}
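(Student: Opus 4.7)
The plan is to reduce this global statement about the sheaf $\mathcal{F}$ to a local calculation at $\lambda$ and then invoke the Tate uniformization of an elliptic curve with multiplicative reduction. This is the geometric analogue of the standard fact about the $\ell$-adic Galois representation attached to an elliptic curve with multiplicative reduction over a local field of residue characteristic different from $\ell$.

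First, I would pass to the strict henselization of $\A^1_{\F_p}$ at $\lambda$, with fraction field $K_\lambda$ and absolute Galois group equal to $I_\lambda$. By smooth base change together with the identification of the stalk of $R^1\pi_!\Q_\ell$ at the geometric generic point $\overline{\eta}$ with the first \'etale cohomology of the geometric generic fiber, the $I_\lambda$-representation $\mathcal{F}_{\overline{\eta}}$ is identified with $H^1_{\et}(E_{\overline{K_\lambda}}, \Q_\ell)(\tfrac12)$. Because $\ell\neq p$, the cyclotomic character is unramified at $\lambda$, so the half-Tate twist does not affect the restriction to $I_\lambda$; moreover, Poincar\'e duality identifies $H^1_{\et}(E_{\overline{K_\lambda}}, \Q_\ell)$ with $V_\ell E(-1)$. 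Hence it suffices to describe the $I_\lambda$-action on $V_\ell E$ up to a choice of basis.

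Next, since $E$ has multiplicative reduction at $\lambda$, after possibly replacing $K_\lambda$ by its unramified quadratic extension to reduce to the split case (this leaves $I_\lambda$ intact), I would invoke Tate uniformization to obtain a $\Gal(\overline{K_\lambda}/K_\lambda)$-equivariant isomorphism $E(\overline{K_\lambda}) \cong \overline{K_\lambda}^{\times}/q^{\Z}$ for a Tate parameter $q \in K_\lambda^{\times}$ with $v_\lambda(q) > 0$. Combining the Kummer sequence on $\mathbb{G}_m$ with the quotient by $q^{\Z}$ yields compatible exact sequences of Galois modules
$$
0 \longrightarrow \mu_{\ell^n} \longrightarrow E[\ell^n] \longrightarrow \Z/\ell^n\Z \longrightarrow 0,
$$
where the right-hand factor is generated by the class of any $\ell^n$-th root $q^{1/\ell^n}$. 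Taking inverse limits gives the standard filtration $0 \to \Z_\ell(1) \to T_\ell E \to \Z_\ell \to 0$.

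Finally, I would read the $I_\lambda$-action off this filtration directly. The subspace $\Z_\ell(1)$ is fixed because $\mu_{\ell^\infty}$ lies in the maximal unramified extension, and the quotient $\Z_\ell$ is fixed because it is generated by the class of $q\in K_\lambda^{\times}$. Thus the action is unipotent, and its off-diagonal entry is determined by the action of $\gamma \in I_\lambda$ on a compatible system $(q^{1/\ell^n})_n$: writing $\gamma(q^{1/\ell^n}) = \zeta_{\ell^n}^{a_n(\gamma)} q^{1/\ell^n}$, the tuple $(a_n(\gamma))_n \in \varprojlim_n \mu_{\ell^n} = \Z_\ell(1)$ equals $v_\lambda(q)\cdot t_\ell(\gamma)$ by the very definition of the tame $\ell$-adic character $t_\ell \colon I_\lambda \to \Z_\ell(1)$. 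Absorbing the nonzero integer $v_\lambda(q)$ into the basis vector spanning $\Z_\ell(1)$ yields the claimed matrix form. The main subtlety to watch is the bookkeeping across the passage from $V_\ell E$ to $H^1 \cong V_\ell E(-1)$ together with the half-Tate twist defining $\mathcal{F}$; since all relevant twists are unramified at $\lambda$, no extra characters appear on $I_\lambda$ and the matrix is exactly as stated.
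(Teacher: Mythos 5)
Your proof is correct. The paper does not prove this lemma internally --- it is quoted from Michel (p.~135 of \emph{RangMoyen}) and Deligne (\emph{Weil~II}, \S4) --- so there is no in-paper argument to compare against, but your Tate-uniformization argument is precisely the standard reasoning those citations stand in for.

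Two small remarks. First, once you pass to the strict henselization at $\lambda$, the residue field is already $\overline{\F_p}$; a curve with multiplicative reduction over a strictly henselian local ring automatically has \emph{split} multiplicative reduction, so the caveat about passing to an unramified quadratic extension is redundant (though harmless). Second, the rescaling that absorbs the nonzero integer $v_\lambda(q)$ into the basis vector spanning $\Z_\ell(1)$ deserves one explicit word: it is legitimate because $\mathcal{F}_{\overline{\eta}}$ is a $\Q_\ell$-vector space rather than a $\Z_\ell$-lattice, so scaling by $v_\lambda(q)$ is an isomorphism even when $\ell \mid v_\lambda(q)$. An alternative route, closer in spirit to the cited passage of Deligne, is to appeal to the Picard--Lefschetz formula for the nodal degeneration at $\lambda$; it yields the same unipotent local monodromy with the tame character as the off-diagonal entry, and it generalizes to higher-dimensional families, but for elliptic curves your Tate-parameter computation is equally valid and somewhat more elementary.
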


\section{Background from Representation Theory}\label{Section4}

Here we recall facts from representation theory which allow for computation of moments using the representations $\mathcal{F}_{\overline{\eta}}$ described in Section~\ref{Section3}. Aside from the technical details, this section is essentially a generalization of some of the tools used in \cite{HGFDistributionOnoSaikia,ono2024distributionhessianvaluesgaussian,grove2024hypergeometricmomentshecketrace}. More precisely, the results in this section combined with Lemma~\ref{FMonoChar} elucidate why the Chebyshev polynomial sums
$
\sum\limits_{\lambda}U_m\left(\frac{a_{p}(\lambda)}{2\sqrt{p}}\right)
$
are precisely the sums with small asymptotic magnitude. Furthermore, this section elucidates why the Catalan numbers are the moments and transforms the induction arguments on Chebyshev polynomials into direct explicit formulas.

In order to compute the moments of traces of Frobenius $a_p(\lambda),$ we make use of the tensor product representations of $\mathcal{F}_{\overline{\eta}}.$

\begin{definition}
    If $G$ is a group, $m\geq 0,$ and $\rho:G\to\GL(V)$ is a finite-dimensional representation of $G,$ the tensor product representation is given by
    $$
    \begin{tikzcd}[row sep=-4pt, column sep= normal]
     \rho^{\otimes m}\colon\!\ \ \ \ G \arrow[r] & GL(V^{\otimes m}) \\
    \ \ \ \ \ \ \ \quad g \arrow[r, maps to] & \rho^{\otimes m}(g),   
    \end{tikzcd}
    $$
    where
    $$
    \rho^{\otimes m}(g)(v_1\otimes\ldots\otimes v_m) := \rho(g)(v_1)\otimes\ldots\otimes\rho(g)(v_m).
    $$
\end{definition}
From the definition, it is obvious that $\Tr(\rho^{\otimes}(m))(g)=\Tr(\rho)(g)^m.$ In order to apply the results from Section~\ref{Section3} to $\rho^{\otimes m},$ where $\rho$ is the standard representation of $\SL_2,$ we decompose $\rho^{\otimes m}$ into a direct sum of irreducible representations.  To this end, recall that if $m\geq 0$ and $\rho:G\to\GL(V)$ is a finite-dimensional representation, then we have the $m$-th symmetric power representation
$$
\Sym^m\rho:G\to\GL(\Sym^mV),
$$
where $\Sym^m V:=V^{\otimes m}/S_m$ is the quotient of $V^{\otimes m}$ by the natural action of the symmetric group $S_m,$ and where $G$ acts naturally on this quotient space. In this notation, we have the following important facts when $\rho$ is the standard representation of $\SL_2.$

\begin{lemma}[Theorems 2.6.1 and 5.6.3 of \cite{KowalskiRep}]
    If $\rho:\SL_2\to V$ is the standard representation of $\SL_2$ on a vector space over a field of characteristic $0$ and $m\geq 0,$ then $\Sym^m\rho$ is an irreducible representation of $\SL_2$ and for all $g\in G,$ we have
    $$
    \tr(g|\Sym^m V) = U_m\left(\frac{\tr(g|V)}{2}\right),
    $$
    where $U_m$ is the $m$-th Chebyshev polynomial of the second kind.
\end{lemma}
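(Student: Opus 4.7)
The plan has two parts: the trace formula and the irreducibility. For both, I fix a basis $e_1, e_2$ of $V$, so that $\{e_1^i e_2^{m-i} : 0 \leq i \leq m\}$ is a basis of the $(m+1)$-dimensional space $\Sym^m V$.

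\emph{Trace formula.} Both sides of the desired identity are polynomial (regular) functions of the entries of $g \in \SL_2$. After base change to an algebraic closure, any $g$ with eigenvalues $\alpha \neq \alpha^{-1}$ is diagonalizable and conjugate to $\mathrm{diag}(\alpha, \alpha^{-1})$, so by conjugation-invariance of the trace it suffices to compute on this diagonal element. On $e_1^i e_2^{m-i}$ it acts by the scalar $\alpha^{2i - m}$, giving
$$
\tr(g|\Sym^m V) = \sum_{i=0}^m \alpha^{2i-m} = \frac{\alpha^{m+1} - \alpha^{-(m+1)}}{\alpha - \alpha^{-1}}.
$$
Writing $\alpha = e^{i\theta}$ and comparing with the definition (\ref{ChebyshevPolynomialDefinition}), the right-hand side equals $U_m((\alpha + \alpha^{-1})/2) = U_m(\tr(g|V)/2)$. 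This proves the identity on the Zariski-dense locus $\{g \in \SL_2 : \tr(g)^2 \neq 4\}$, and it extends to all of $\SL_2$ by polynomial continuation.

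\emph{Irreducibility.} Let $W \subseteq \Sym^m V$ be a nonzero $\SL_2$-invariant subspace. The diagonal torus acts on each $e_1^i e_2^{m-i}$ by the distinct character $\alpha \mapsto \alpha^{2i-m}$, so $W$ must be spanned by some subset of these monomial vectors. Let $i_0$ be the smallest index for which $e_1^{i_0} e_2^{m-i_0} \in W$. The upper-unipotent element $\SM{1}{t}{0}{1}$ sends this vector to
$$
e_1^{i_0}(t e_1 + e_2)^{m-i_0} = \sum_{k=0}^{m-i_0}\binom{m-i_0}{k} t^k\, e_1^{i_0+k} e_2^{m-i_0-k},
$$
a polynomial in $t$ taking values in $W$, so each coefficient must lie in $W$; in particular $e_1^m \in W$. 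An analogous application of the lower-unipotent subgroup $\SM{1}{0}{t}{1}$ to $e_1^m$ produces all remaining monomial basis vectors, forcing $W = \Sym^m V$.

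The argument is largely routine; the only minor subtlety is the polynomial-density step that extends the trace identity from diagonalizable elements to the unipotent locus. Using unipotent orbits (rather than invoking Lie algebra or highest-weight theory) keeps the irreducibility proof self-contained and explicit.
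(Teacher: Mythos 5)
Your proof is correct and self-contained. The paper itself does not prove this lemma — it cites it to Theorems 2.6.1 and 5.6.3 of Kowalski's book — so there is no in-paper argument to compare against, but your argument is essentially the standard one that such a reference would give: compute the trace on the dense locus of regular semisimple elements and extend by Zariski density, and prove irreducibility by the ``torus weight spaces plus unipotent orbits'' argument rather than appealing to the classification of irreducible $\mathfrak{sl}_2$-modules. A couple of points worth making explicit for a fully airtight write-up: (i) in the irreducibility step, to conclude that $W$ is spanned by monomials you should pick a single element $\mathrm{diag}(\alpha,\alpha^{-1})$ with $\alpha$ of infinite multiplicative order (e.g.\ $\alpha=2$, which exists since $\mathrm{char}\,k=0$); its eigenvalues $\alpha^{2i-m}$ on $\Sym^m V$ are then pairwise distinct, so any invariant subspace is a sum of one-dimensional eigenspaces; (ii) the step ``the polynomial in $t$ takes values in $W$, hence each coefficient lies in $W$'' uses that $k$ is infinite, again guaranteed by characteristic $0$; (iii) in the trace computation you don't actually need to invoke $\alpha=e^{i\theta}$ — the identity $\sum_{i=0}^m\alpha^{2i-m}=U_m\bigl(\tfrac{\alpha+\alpha^{-1}}{2}\bigr)$ is a Laurent-polynomial identity in $\alpha$ that follows directly from the explicit formula~(\ref{ChebyshevExplicit}), avoiding any appeal to real or complex analysis. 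None of these affect the validity of the argument; they merely make the ground-field hypotheses visible.
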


In order to compute the moments in Theorems~\ref{theorem1} and \ref{theorem3}, we decompose the tensor product representations $\rho^{\otimes m}$ into a direct sum of symmetric power representations and a trivial representation. The aim of this is to isolate the irreducible nontrivial components, which by Lemma~\ref{extremalCohomologyDescription}, Theorem~\ref{GrothendieckLefschetz}, and Theorem~\ref{DelignePurity} have small contributions to the moments. The following well-known lemma (for example, see 2.2. of \cite{TemperleyLieb}) provides the required direct sum decomposition.

\begin{lemma}\label{oneRepMult}
    If $\rho$ is the standard representation of $\SL_2$ and $m\geq 0,$ then we have
    $$
    \rho^{\otimes m}\cong\bigoplus\limits_{\substack{r\leq m \\ r\equiv m\Mod{2}}} n_m(r)\Sym^r\rho, 
    $$
    where
    $$
    n_m(r) = \binom{m}{\frac{m+r}{2}}\cdot\frac{2(r+1)}{m+r+2}.
    $$
    In particular, when $m=2m_1$ is even, the multiplicity $n_m(0)$ of the trivial representation is given by the Catalan number $C(m_1).$
\end{lemma}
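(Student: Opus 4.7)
The approach is induction on $m$ using the Clebsch--Gordan rule for $\SL_2$, which states that for $s\geq 1,$
$$\Sym^s\rho\otimes\rho\;\cong\;\Sym^{s+1}\rho\oplus\Sym^{s-1}\rho,$$
with the convention $\Sym^{-1}\rho=0$ (so in particular $\rho\otimes\rho\cong\Sym^2\rho\oplus\Sym^0\rho$). This decomposition is a one-line character computation: on the diagonal torus the character of $\Sym^s\rho$ is $x^s+x^{s-2}+\cdots+x^{-s}$, and multiplying by the character $x+x^{-1}$ of $\rho$ telescopes to the sum of the characters of $\Sym^{s+1}\rho$ and $\Sym^{s-1}\rho$. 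The base case $m=0$ is immediate, since $\rho^{\otimes 0}$ is the trivial representation and $n_0(0)=\binom{0}{0}\cdot\tfrac{2}{2}=1$.

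For the inductive step, I would write $\rho^{\otimes m}\cong\rho^{\otimes(m-1)}\otimes\rho$, apply the induction hypothesis to the first factor, and distribute the Clebsch--Gordan rule. The multiplicity of $\Sym^r\rho$ on the right then collects as $n_{m-1}(r-1)+n_{m-1}(r+1)$, so what remains is the combinatorial identity
$$n_{m-1}(r-1)+n_{m-1}(r+1)=n_m(r),$$
with the extremal terms $n_{m-1}(-1)$ and $n_{m-1}(m+1)$ interpreted as $0$. Setting $k=\tfrac{m+r}{2}$ and clearing to the common denominator $(k+1)!(m-k)!$, the identity reduces to the polynomial equality
$$(k+1)(2k-m)+(m-k)(2k-m+2)=m(r+1),$$
which is verified by expansion. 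The only mildly delicate point is keeping track of the boundary cases $r\in\{0,m\}$, but the vanishing conventions above make the recursion match the closed form at these endpoints.

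The Catalan statement is then the specialization $r=0$, $m=2m_1$, which reads
$$n_{2m_1}(0)=\binom{2m_1}{m_1}\cdot\frac{2}{2m_1+2}=\frac{1}{m_1+1}\binom{2m_1}{m_1}=C(m_1),$$
as required. I expect the only technical step to be the combinatorial identity in the inductive step, but this is a short algebraic verification rather than a genuine obstacle. A conceptually cleaner variant that bypasses the recurrence is purely character-theoretic: substituting $x=e^{i\theta}$ in $(x+x^{-1})^m=\sum_r n_m(r)\,\tr(\Sym^r\rho)$ and multiplying by $\sin\theta$, one expresses $(2\cos\theta)^m\sin\theta$ in the Fourier basis $\{\sin((r+1)\theta)\}$ using the binomial expansion of $(e^{i\theta}+e^{-i\theta})^m$ and reads off $n_m(r)=\binom{m}{(m+r)/2}-\binom{m}{(m+r)/2+1}$, which one then checks coincides with the stated closed form and specializes to $C(m_1)$ at $r=0$.
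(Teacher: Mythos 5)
Your argument is correct. The paper itself gives no proof of this lemma---it cites it as ``well-known'' with a reference to the Temperley--Lieb literature---so there is no in-text argument to compare against; your proof fills that gap. The Clebsch--Gordan induction is sound: I checked the bracket expansion
$$(k+1)(2k-m)+(m-k)(2k-m+2)=2mk-m^2+m=m(2k-m+1)=m(r+1),$$
which upon restoring the common factor $\frac{(m-1)!}{k!(m-k)!(k+1)}$ recovers $n_m(r)=\frac{m!(r+1)}{k!(m-k)!(k+1)}$, and the closed form handles the boundary terms $n_{m-1}(-1)=0$ and $n_{m-1}(m+1)=0$ automatically. The alternative character-theoretic derivation you sketch yields
$$n_m(r)=\binom{m}{\tfrac{m+r}{2}}-\binom{m}{\tfrac{m+r}{2}+1}=\binom{m}{k}\cdot\frac{(k+1)-(m-k)}{k+1}=\binom{m}{k}\cdot\frac{r+1}{k+1},$$
which is the paper's stated closed form, and at $r=0$, $m=2m_1$ both routes give $\frac{1}{m_1+1}\binom{2m_1}{m_1}=C(m_1)$. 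Either argument would serve as a complete proof; the character-theoretic one is the shorter of the two since it avoids the induction entirely.
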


\begin{remark}
We give the following two remarks on Lemma~\ref{oneRepMult}.
\begin{enumerate}
\item In \cite{HGFDistributionOnoSaikia, ono2024distributionhessianvaluesgaussian,grove2024hypergeometricmomentshecketrace}, the authors compute the moments by an induction argument on the contributions of the Chebyshev polynomials of the traces of Frobenius. Lemma~\ref{oneRepMult} serves as a direct expression of the $m$-th moments in terms of the various Chebyshev polynomials that appear in those works. For example, if $m=4,$ then we have
\begin{align*}
    \sum\limits_{\lambda\in\F_p}\left(\frac{a_p(\lambda)}{\sqrt{p}}\right)^4 &=\sum\limits_{\lambda\in\F_p} n_4(0)U_0\left(\frac{a_p(\lambda)}{2\sqrt{p}}\right) + n_4(2) \sum_{\l \in \fp} U_2\left(\frac{a_p(\lambda)}{2\sqrt{p}}\right) + n_4(4) \sum_{\l \in \fp}U_4\left(\frac{a_p(\lambda)}{2\sqrt{p}}\right) \\ 
    &= 2p + 3\sum\limits_{\lambda\in\F_p}U_2\left(\frac{a_p(\lambda)}{2\sqrt{p}}\right) + \sum\limits_{\lambda\in\F_p}U_4\left(\frac{a_p(\lambda)}{2\sqrt{p}}\right).
    \end{align*}
This, for example, makes it clear why the statement that 
$$
\lim\limits_{p\to\infty}\frac{1}{p}\sum\limits_{\lambda\in\F_p} U_m\left(\frac{a_p(\lambda)}{2\sqrt{p}}\right) = 0
$$
for all $m>0,$ proved in the aforementioned works, implies that the $4$-th moment is given by the Catalan number $C(2)=2.$

\item In our work, we only make use of the multiplicity of the trivial representation in $\rho^{\otimes m}.$ The full decomposition is noted only for completeness.
\end{enumerate}
\end{remark}

In the setting of Theorem~\ref{theorem4}, Lemma~\ref{hypergeometricClausen} implies that if $m\geq 0$ is even, then the $m$-th moments of the $H_p$ functions are normalized even moments of $U_2\left(\frac{a^{\Cl}_p(\lambda)}{2\sqrt{p}}\right).$ In order to compute these moments, we decompose the tensor product representation of $(\Sym^2\rho)^{\otimes m}$ into a direct sum of symmetric power representations and a trivial representation. For the even moments, we require the following lemma which gives the multiplicity of the trivial representation.

\begin{lemma}\label{symsquareRepMult}
    If $\rho$ is the standard representation of $\SL_2$ and $m\geq 0$, then the multiplicity of the trivial representation in the direct sum decomposition of $(\Sym^2\rho)^{\otimes m}$ is given by
    $$
    (-1)^m\cdot\sum\limits_{i=0}^m(-1)^i\binom{m}{i}\frac{(2i)!}{i!(i+1)!}.
    $$
\end{lemma}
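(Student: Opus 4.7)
The plan is to compute the multiplicity via characters on the compact form $\SU_2$. Recall that for any finite-dimensional algebraic representation $V$ of $\SL_2$, the multiplicity of the trivial representation in $V$ equals $\dim V^{\SL_2} = \int_{\SU_2}\chi_V(g)\,dg$, where $\chi_V$ is the character.

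First I would use the standard decomposition $\rho\otimes\rho \cong \Sym^2\rho \oplus \Lambda^2\rho$, and since $\Lambda^2\rho$ is the trivial representation (the determinant of $\rho$ is $1$), taking characters yields
$$\chi_{\Sym^2\rho}(g) = \chi_\rho(g)^2 - 1.$$
Raising to the $m$-th tensor power and using that characters are multiplicative under tensor products gives
$$\chi_{(\Sym^2\rho)^{\otimes m}}(g) = \bigl(\chi_\rho(g)^2-1\bigr)^m = \sum_{i=0}^{m}\binom{m}{i}(-1)^{m-i}\chi_\rho(g)^{2i}.$$

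Next I would integrate term-by-term over $\SU_2$. The remark following Theorem~\ref{theorem1} records exactly the identity we need, namely
$$\int_{\SU_2}\chi_\rho(g)^{2i}\,dg = \int_{\SU_2}\Tr(g)^{2i}\,dg = C(i) = \frac{(2i)!}{i!(i+1)!},$$
so that the multiplicity of the trivial representation in $(\Sym^2\rho)^{\otimes m}$ equals
$$\sum_{i=0}^{m}(-1)^{m-i}\binom{m}{i}\frac{(2i)!}{i!(i+1)!} = (-1)^m\sum_{i=0}^{m}(-1)^i\binom{m}{i}\frac{(2i)!}{i!(i+1)!},$$
which is precisely the claimed expression.

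There is essentially no obstacle: the character identity $\chi_{\Sym^2\rho}=\chi_\rho^2-1$ follows from the Clebsch--Gordan decomposition for $\SL_2$, and the Catalan moment integral has been explicitly recorded earlier in the paper. The only subtlety worth mentioning is the passage from algebraic $\SL_2$-representations to Haar integrals on the compact form $\SU_2$, which is justified by Weyl's unitary trick (finite-dimensional algebraic representations of $\SL_2(\C)$ are completely reducible and their restriction to $\SU_2$ preserves both irreducibility and multiplicities of isotypic components).
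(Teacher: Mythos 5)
Your proof is correct, and while it starts from the same basic decomposition as the paper (namely $\rho^{\otimes 2}\cong\Sym^2\rho\oplus\varepsilon$, equivalently $\chi_{\Sym^2\rho}=\chi_\rho^2-1$), it goes in the opposite and more direct direction. The paper tensors $\rho^{\otimes 2}=\Sym^2\rho\oplus\varepsilon$ to the $m$-th power, compares multiplicities to obtain the relation $C(m)=\sum_{i=0}^m\binom{m}{i}n_\varepsilon((\Sym^2\rho)^{\otimes i})$, and then must still prove that the claimed alternating sum satisfies this identity; it does so by an induction coupled with a swap-of-summation computation that amounts to verifying a binomial inversion by hand. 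You instead treat $\Sym^2\rho$ as the virtual representation $\rho^{\otimes 2}\ominus\varepsilon$, expand $(\chi_\rho^2-1)^m$ by the binomial theorem, and integrate termwise against Haar measure on $\SU_2$ using the recorded Catalan moment $\int_{\SU_2}\chi_\rho^{2i}=C(i)$, landing on the answer in one pass with no induction and no combinatorial identity left to check. In effect you have performed the binomial inversion implicitly by expanding in the ``easy'' direction, which is cleaner. The one step you correctly flag as needing justification --- passing from algebraic $\SL_2$-multiplicities to integrals over the compact form --- is handled by Weyl's unitary trick exactly as you say, and indeed the paper already invokes this correspondence in the remark following Theorem~\ref{theorem1}. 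Both proofs are sound; yours trades a small amount of representation-theoretic setup for a substantially shorter computation.
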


\begin{proof}
If $m=0,$ then this is trivial. Now suppose $m\geq1.$
Using Lemma~\ref{oneRepMult}, we have that
$$
\rho^{\otimes 2}\cong \Sym^2\rho\oplus\varepsilon,
$$
where $\varepsilon$ is the trivial representation. Taking $m$-th tensor powers on both sides and comparing the multiplicities of the trivial representation, we have that
$$
\frac{(2m)!}{m!(m+1)!} = \sum\limits_{i=0}^m \binom{m}{i}n_\varepsilon((\Sym^2\rho)^{\otimes i}),
$$
where $n_\varepsilon((\Sym^2\rho)^{\otimes i})$ is the multiplicity of the trivial representation in the direct sum decomposition of $(\Sym^2\rho)^{\otimes i}.$ Therefore, if we define
$$
a(m):=(-1)^m\cdot\sum\limits_{i=0}^m(-1)^i\binom{m}{i}\frac{(2i)!}{i!(i+1)!},
$$
then, by induction on $m,$ our claim is equivalent to showing that
$$
\sum\limits_{i=0}^{m}\binom{m}{i}a(i) = \frac{(2m)!}{m!(m+1)!}.
$$
Expanding the definition of $a(i),$ this is equivalent to showing that
$$
\sum\limits_{i=0}^{m-1}(-1)^i\sum\limits_{j=0}^i(-1)^j\binom{i}{j}\binom{m}{i}\cdot\frac
{(2j)!}{j!(j+1)!} = -\sum\limits_{i=0}^{m-1}(-1)^{m+i}\binom{m}{i}\frac{(2i)!}{i!(i+1)!}.
$$
By switching the order of summation, we write the left-hand side as
$$
\sum\limits_{j=0}^{m-1}\sum\limits_{i=j}^{m-1}(-1)^{i+j}\binom{i}{j}\binom{m}{i}\frac{(2j)!}{j!(j+1)!}.
$$
However, we have that the inner sum is given by
$$
\frac{(2j)!}{j!(j+1)!}\sum\limits_{i=0}^{m-j-1}(-1)^i\frac{m!}{j!i!(m-j-i)!},
$$
which by the binomial theorem is clearly equal to
$$
(-1)^{j+m+1}\cdot\binom{m}{j}\cdot\frac{(2j)!}{j!(j+1)!}.
$$
Putting all this together, we have our claim.
\end{proof}

For the odd moments, we require the moments of $U_2\left(\frac{a_p^{\Cl}(\lambda)}{2\sqrt{p}}\right)$ twisted by the trace $\phi_p(\lambda+1)$ of a Kummer sheaf. To compute those moments, we require the following elementary lemma (see Exercise 2.2.14 of \cite{KowalskiRep} for example) on the irreducibility of tensor products with one-dimensional representations.

\begin{lemma}\label{twistIrreducibility}
    If $G$ is a group, $V$ a finite-dimensional vector space over a field $k,$  $\rho:G\to\GL(V)$ an irreducible representation of $G,$ and $\chi:G\to k^\times$ a one-dimensional representation of $G,$ then $\rho\otimes\chi$ is irreducible.
\end{lemma}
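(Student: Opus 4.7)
The plan is to observe that tensoring with a one-dimensional representation does not change the lattice of invariant subspaces, so irreducibility is preserved.

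First, I would identify the representation space of $\rho \otimes \chi$. Since $\chi$ acts on a one-dimensional space $k$, there is a canonical isomorphism $V \otimes_k k \cong V$, under which $(\rho \otimes \chi)(g)$ corresponds to the operator $v \mapsto \chi(g)\rho(g)(v)$ on $V$. Thus $\rho \otimes \chi$ can be realized on the same underlying space $V$ as $\rho$, but with each $\rho(g)$ scaled by the nonzero scalar $\chi(g) \in k^\times$.

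Next, I would compare invariant subspaces. Let $W \subseteq V$ be any subspace. Then $W$ is stable under $\rho \otimes \chi$ if and only if $\chi(g)\rho(g)(w) \in W$ for every $g \in G$ and $w \in W$. Because $\chi(g)$ is an invertible scalar and $W$ is a subspace (hence closed under scalar multiplication), this condition is equivalent to $\rho(g)(w) \in W$ for every $g \in G$ and $w \in W$, i.e.\ to $W$ being stable under $\rho$. Hence the $G$-invariant subspaces of $V$ under $\rho$ and under $\rho \otimes \chi$ coincide.

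Finally, I would conclude: since $\rho$ is irreducible, the only $\rho$-invariant subspaces of $V$ are $0$ and $V$, so the same holds for $\rho \otimes \chi$, proving that $\rho \otimes \chi$ is irreducible. There is no real obstacle here; the only point worth being careful about is that the argument genuinely uses that $\chi$ is one-dimensional (so that it acts by scalars, which preserve every subspace), and that $\chi(g) \neq 0$ (which is automatic since $\chi$ takes values in $k^\times$).
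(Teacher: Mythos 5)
Your proof is correct. The paper does not actually supply a proof of this lemma; it simply cites it as a well-known elementary fact (Exercise 2.2.14 of the reference \cite{KowalskiRep}). Your argument---realizing $\rho\otimes\chi$ on the same underlying space $V$ with each operator scaled by the unit $\chi(g)$, and observing that this leaves the lattice of invariant subspaces unchanged---is precisely the standard proof one would give, and it is complete and correct as written.
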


In order to use the geometric monodromy computation of Deligne in Lemma~\ref{FMonoChar}, we connect the irreducibility of $G^{\geom}$ with $G_{\geom}.$
Recall that the geometric mondromy group $G_{\geom}$ is the Zariski closure of $G^{\geom}$ in $\GL(\mathcal{F}_{\overline{\eta}}).$ The following well-known lemma (for example, see Exercise 7.1.6 of \cite{KowalskiRep}) relates the irreducibility of a representation
$$
\rho: G\to\GL(V)
$$
to the irreducibility of the (standard) representation of its Zariski closure $\overline{\rho(G)}$ in $\GL(V).$

\begin{lemma}\label{RepZar}
    Let $G$ be a group and $\rho:G\to\GL(V)$ be a finite-dimensional linear representation of $G.$ We have that $\rho$ is irreducible if and only if the standard representation of $\overline{\rho(G)}$ is irreducible.
\end{lemma}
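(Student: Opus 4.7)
The plan is to reduce both irreducibility statements to the same combinatorial condition: that no proper nonzero subspace of $V$ is preserved. The key observation is that $\rho(G)$-stable subspaces of $V$ coincide exactly with $\overline{\rho(G)}$-stable subspaces, so irreducibility of $\rho$ and of the inclusion $\overline{\rho(G)}\hookrightarrow\GL(V)$ are literally the same condition.

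The first step I would carry out is to fix an arbitrary subspace $W\subseteq V$ and consider its pointwise stabilizer
$$
\mathrm{Stab}(W):=\{g\in\GL(V):g(W)\subseteq W\}.
$$
I would verify that this is a Zariski closed subgroup of $\GL(V)$: choosing a basis of $V$ extending a basis of $W$, the condition $g(W)\subseteq W$ becomes the vanishing of certain entries of the matrix of $g$, which are polynomial (indeed linear) in the coordinates on $\GL(V)$. Hence $\mathrm{Stab}(W)$ is cut out by polynomial equations and is therefore closed.

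Next I would exploit this closedness to establish the equivalence of invariance. One direction is immediate: since $\rho(G)\subseteq\overline{\rho(G)}$, any $\overline{\rho(G)}$-stable subspace is $\rho(G)$-stable. Conversely, if $W$ is $\rho(G)$-stable, then $\rho(G)\subseteq\mathrm{Stab}(W)$; taking Zariski closure of both sides and using that $\mathrm{Stab}(W)$ is already closed gives $\overline{\rho(G)}\subseteq\mathrm{Stab}(W)$, so $W$ is $\overline{\rho(G)}$-stable.

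Finally, I would conclude by unpacking definitions: $\rho$ is irreducible precisely when $V$ has no proper nonzero $\rho(G)$-stable subspace, while the standard representation of $\overline{\rho(G)}$ is irreducible precisely when $V$ has no proper nonzero $\overline{\rho(G)}$-stable subspace; by the previous step these two conditions coincide. There is no real obstacle here beyond the verification that the stabilizer of a subspace is Zariski closed, which is a direct coordinate computation; the rest is formal.
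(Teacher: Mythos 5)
Your proof is correct and follows essentially the same approach as the paper's: the paper likewise considers the Zariski-closed stabilizer of a $\rho(G)$-stable subspace $W$ and observes that, being closed and containing $\rho(G)$, it must contain $\overline{\rho(G)}$. Your version is marginally cleaner in stating symmetrically that $\rho(G)$-stable and $\overline{\rho(G)}$-stable subspaces coincide, but the key closedness argument is the same.
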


\begin{proof}
    Here we write the proof for the reader's convenience.
    If $\rho$ is irreducible, it follows immediately that $\overline{\rho(G)}$ acts irreducibly. Now, suppose the standard representation of $\overline{\rho(G)}$ is irreducible, and let $W$ be a subspace of $V$ such that 
    $$
    \rho(G)(W)\subset W.
    $$
    Let $\mathbf{H}$ be the subset of elements $\mathbf{h}\in\overline{\rho(G)}$ such that
    $$
    \mathbf{h}\cdot W\subset W.
    $$
    It is clear that $\mathbf{H}$ is a Zariski closed algebraic subgroup of $\GL(V)$ which contains $\rho(G).$ This implies that $\mathbf{H}=\overline{\rho(G)}.$ Since $\overline{\rho(G)}$ acts irreducibly, $W=\{0\}$ or $W=V,$ proving our claim.
\end{proof}

The following technical lemma plays an essential role in deducing irreducible decomposition of tensor products of sheaves from the geometric monodromy computation in Theorem~\ref{GKRCriterion}.

\begin{lemma}\label{geoMonTens}
    Let $G$ be a group, $\rho_1,\rho_2$ be finite-dimensional representations of $G,$ and let $r_1,r_2$ be Zariski-continuous irreducible representations of the Zariski closures $\overline{\rho_1(G)},\overline{\rho_2(G)}$ respectively. If 
    $$
    \overline{(\rho_1\oplus\rho_2)(G)}=\overline{\rho_1(G)}\times\overline{\rho_2(G)},
    $$
    then $r_1\circ\rho_1\otimes r_2\circ\rho_2$ is an irreducible representation of $G.$
\end{lemma}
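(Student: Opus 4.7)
\smallskip

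The plan is to reduce the irreducibility of $r_1\circ\rho_1\otimes r_2\circ\rho_2$ as a representation of $G$ to the irreducibility of a certain representation of the Zariski closure of its image, via Lemma~\ref{RepZar}, and then to exploit the hypothesis on the joint Zariski closure.

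Concretely, let $H_i:=\overline{\rho_i(G)}$ and let $V_i$ be the representation space of $r_i$. First, I would observe that the representation $r_1\circ\rho_1\otimes r_2\circ\rho_2$ factors as the composition
$$
G\xrightarrow{\rho_1\oplus\rho_2} H_1\times H_2 \xrightarrow{r_1\boxtimes r_2}\GL(V_1\otimes V_2),
$$
where $r_1\boxtimes r_2$ is the external tensor product, $(h_1,h_2)\mapsto r_1(h_1)\otimes r_2(h_2)$. By the standing hypothesis, the image $(\rho_1\oplus\rho_2)(G)$ is Zariski-dense in $H_1\times H_2$. Since $r_1\boxtimes r_2$ is an algebraic homomorphism of algebraic groups, and the image of such a homomorphism is a (Zariski-)closed subgroup $K\subset \GL(V_1\otimes V_2)$, the standard fact that a morphism sends dense sets to dense subsets of the image yields
$$
\overline{(r_1\boxtimes r_2)\bigl((\rho_1\oplus\rho_2)(G)\bigr)} \;=\; (r_1\boxtimes r_2)(H_1\times H_2) \;=\; K.
$$

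Next, I would argue that $K$ acts irreducibly on $V_1\otimes V_2$. By Burnside--Jacobson density applied to the irreducible representations $r_i$ of $H_i$, the linear spans of $r_1(H_1)$ and $r_2(H_2)$ are all of $\mathrm{End}(V_1)$ and $\mathrm{End}(V_2)$ respectively. Consequently, the linear span of $\{r_1(h_1)\otimes r_2(h_2): (h_1,h_2)\in H_1\times H_2\}$ is $\mathrm{End}(V_1)\otimes\mathrm{End}(V_2)=\mathrm{End}(V_1\otimes V_2)$, so a second application of Burnside--Jacobson shows that $K$ acts irreducibly on $V_1\otimes V_2$. Since the standard representation of $K\subseteq\GL(V_1\otimes V_2)$ coincides with the action of the Zariski closure of the image of $r_1\circ\rho_1\otimes r_2\circ\rho_2$, Lemma~\ref{RepZar} gives the desired irreducibility.

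I do not anticipate a serious obstacle: the essential input is the standard tensor product theorem for irreducible representations of a direct product over an algebraically closed field (which applies since the sheaves in question have coefficients in $\overline{\Q_\ell}$), together with the elementary topological fact that a continuous map pushes a dense subset of the domain to a dense subset of the image. The mildest subtlety is checking that $r_1\boxtimes r_2$ has closed image, which follows from the general principle that the image of a homomorphism of algebraic groups is closed; alternatively one may simply replace $K$ by $\overline{(r_1\boxtimes r_2)(H_1\times H_2)}$ throughout, as the irreducibility argument is unaffected.
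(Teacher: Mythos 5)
Your proof is correct and follows essentially the same route as the paper: push Zariski-density of $(\rho_1\oplus\rho_2)(G)$ in $H_1\times H_2$ through the map $r_1\boxtimes r_2$, establish that $K=(r_1\boxtimes r_2)(H_1\times H_2)$ acts irreducibly, and conclude by Lemma~\ref{RepZar}. The one variation is that where the paper simply cites the standard fact that the external tensor product of irreducible representations of a product group is irreducible (Proposition~2.3.23 of \cite{KowalskiRep}), you reprove it from scratch via Burnside--Jacobson density; this is a perfectly valid, more self-contained substitute, and you correctly flag both the need for algebraically closed coefficients and the minor point about closedness of the image (with the harmless fallback of replacing $K$ by its closure).
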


\begin{proof}
    The assumption
    $$
    \overline{(\rho_1\oplus\rho_2)(G)}=\overline{\rho_1(G)}\times\overline{\rho_2(G)},
    $$ 
    implies that
    \begin{equation}\label{1Imp}
    (r_1,r_2)(\overline{(\rho_1\oplus\rho_2)(G)})=r_1(\overline{\rho_1(G)})\times r_2(\overline{\rho_2(G)}).
    \end{equation}
    Since the pair $(r_1,r_2)$ is Zariski-continuous, we have that
    \begin{equation}\label{2Imp}
    \overline{(r_1,r_2)((\rho_1\oplus\rho_2)(G))}\supset (r_1,r_2)(\overline{(\rho_1\oplus\rho_2)(G))}.
    \end{equation}
    Putting (\ref{1Imp}) and (\ref{2Imp}) together, we have that
    $$
    \overline{(r_1\circ\rho_1\otimes r_2\circ\rho_2)(G)}\supset(r_1\boxtimes r_2)(\overline{\rho_1(G)}\times\overline{\rho_2(G)}),
    $$
    where $r_1\boxtimes r_2$ is the external tensor product (see p. 64 of \cite{KowalskiRep}) of $r_1$ and $r_2.$ Since $r_1$ and $r_2$ are irreducible, we have that (see Proposition 2.3.23 of \cite{KowalskiRep}) $r_1\boxtimes r_2$ is irreducible, and therefore, the standard representation of $\overline{(r_1\circ\rho_2\otimes r_2\circ\rho_2)(G)}$ is irreducible. By Lemma~\ref{RepZar}, we conclude that $r_1\circ\rho_1\otimes r_2\circ\rho_2$ is an irreducible representation of $G.$
    
\end{proof}

Finally, in order to verify the conditions of Theorem~\ref{GKRCriterion} in our setting by applying Lemma~\ref{InertiaAction}, we compute the space of invariants after twisting by a rank-$1$ $\ell$-adic sheaf.

\begin{lemma}\label{inertGeoConst}
    Let $I$ be a group and $\rho:I\to\GL(V)$ be a rank-$2$ representation of $I$ such that under some basis, for every $\gamma\in I,$ we have
    $$
    \rho(\gamma)=\begin{pmatrix}
        1 & t(\gamma) \\
        0 & 1
    \end{pmatrix},
    $$
    where $t(\gamma)\neq 0$ for all $\gamma\in I.$ In this notation, if $\chi$ is a nontrivial rank-$1$ representation of $I,$ then the space of invariants of $\rho\otimes\chi$ is $\{0\}.$
\end{lemma}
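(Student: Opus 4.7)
The plan is to work directly in the basis in which $\rho$ is upper-triangular unipotent and compute the invariants explicitly, since $\rho\otimes\chi$ again lives on the two-dimensional space $V$ with a very transparent matrix form.

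Let $e_1,e_2$ be the basis of $V$ in which $\rho(\gamma) = \SM{1}{t(\gamma)}{0}{1}$. Tensoring with the one-dimensional representation $\chi$ does not change the underlying vector space, and in the basis $e_1\otimes 1, e_2\otimes 1$ the matrix of $(\rho\otimes\chi)(\gamma)$ is simply
\[
\begin{pmatrix} \chi(\gamma) & \chi(\gamma)\,t(\gamma) \\ 0 & \chi(\gamma) \end{pmatrix}.
\]
Write an arbitrary vector as $v = ae_1 + be_2$. The invariance condition $(\rho\otimes\chi)(\gamma)v = v$ for all $\gamma\in I$ then unpacks into the pair of scalar equations
\[
\chi(\gamma)\bigl(a + t(\gamma)b\bigr) = a,\qquad \chi(\gamma)\,b = b,
\]
valid for every $\gamma\in I$.

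From the second equation, the non-triviality of $\chi$ furnishes some $\gamma_0\in I$ with $\chi(\gamma_0)\neq 1$, forcing $b = 0$. Substituting back into the first equation gives $\chi(\gamma)a = a$ for all $\gamma$, and invoking the same $\gamma_0$ yields $a = 0$. Hence the space of $I$-invariants of $\rho\otimes\chi$ is trivial.

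There is essentially no obstacle here; the only subtlety is recognizing that tensoring with a character just multiplies the matrix of $\rho$ by the scalar $\chi(\gamma)$, after which the upper-triangular form reduces the invariance condition to the two linear constraints above. The hypothesis that $t$ is nontrivial (as in Lemma~\ref{InertiaAction}) plays no role in the argument itself; it is the non-triviality of $\chi$ that does all the work.
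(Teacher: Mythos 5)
Your proof is correct and follows the same approach as the paper's (a direct computation in the given basis), though you spell out the two scalar equations that the paper leaves as ``a direct computation'' and, in doing so, you make explicit that only the nontriviality of $\chi$ is used, so the hypothesis $t(\gamma)\neq 0$ is superfluous for this particular conclusion. Your phrasing is also slightly more careful than the paper's: the paper asks that ``the subspace generated by $v$ is invariant,'' which taken literally is the eigenvector condition (and is satisfied by $v_1$), whereas the intended and correct hypothesis, which you use, is that $v$ itself is fixed by every $(\rho\otimes\chi)(\gamma)$.
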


\begin{proof}
    Let $\{v_1,v_2\}$ be the basis in the statement of the lemma, and let $v=c_1v_1+c_2v_2$ be an element of $V.$ If the subspace generated by $v$ is invariant under the action of $(\rho\otimes\chi)(I),$ a direct computation shows that $c_1=c_2=0.$
\end{proof}

\section{Moments of Frobenius Traces}\label{Section5}

Here we assemble the results from Sections~\ref{Section3} and \ref{Section4} to compute the moments and mixed-moments that are required for Theorems~\ref{theorem1} through \ref{theorem4}.

To make this precise, for $i=1,2,$ let $E_{i,\lambda}$ be the two affine varieties over $\A^1_\Z$ given by
$$
E_{i,\lambda}:\ \ \ y^2+a_{i,1}(\lambda)xy+a_{i,3}(\lambda)y=x^3+a_{i,2}(\lambda)x^2+a_{i,4}(\lambda)x+a_{i,6}(\lambda)
$$
with $a_{i,1}(\lambda)\ldots a_{i,6}(\lambda)\in\Z[\lambda].$ Furthermore denote by $\pi_i:E_{i,\lambda}\to \A^1_\Z$ the projection map
$$
\pi_i(x,y,\lambda)=\lambda
$$
and define $\mathcal{F}_i:=(R^1\pi_{i,!}\Q_\ell)(\frac{1}{2}).$ For a prime $p,$ we denote by $\mathcal{F}_{i,p}$ the base-change of $\mathcal{F}_i$ to $\A^1_{\F_p}.$  We further assume that there the $j$-invariants of $E_{1,\lambda}$ and $E_{2,\lambda}$ are nonconstant, and that there exists $N\geq 1$ such that for all $p\nmid N$ and all rank-$1$ $\ell$-adic sheaves $\mathcal{L}_p$ on $\A_{\F_p}^1,$ we have
$$
\mathcal{F}_{1,p}\not\cong_{\text{geom}}\mathcal{F}_{2,p}\otimes\mathcal{L}_p.
$$
Finally, write $a_{i,p}(\lambda):=p+1-\#E_{i,\lambda}(\F_p).$ In this notation, we have the following preliminary version of Theorem~\ref{theorem2}.

\begin{proposition}\label{mixedMoments}
If $m,n\geq 0,$ then we have
$$
\lim\limits_{p\to\infty}\frac{1}{p^{m/2+n/2+1}}\sum\limits_{\lambda\in\F_p}a_{1,p}(\lambda)^m a_{2,p}(\lambda)^n = \begin{cases}
    C(m_1)C(n_1) &\ \ \ \text{ if } m=2m_1\text{ and }n=2n_1\text{ are even} \\ 
    0 &\ \ \ \text{ otherwise}.
\end{cases}
$$
\end{proposition}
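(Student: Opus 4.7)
\smallskip
\noindent\textbf{Proof plan for Proposition~\ref{mixedMoments}.} The plan is to reinterpret the mixed moments as Frobenius trace sums on the tensor product sheaf $\mathcal{G}_p := \mathcal{F}_{1,p}^{\otimes m}\otimes\mathcal{F}_{2,p}^{\otimes n}$, decompose this sheaf into geometrically irreducible pieces, and use the Grothendieck--Lefschetz trace formula together with Deligne's purity to isolate the main term.

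\smallskip
\noindent First, by (\ref{traceF}) and multiplicativity of trace under tensor product, for $\lambda$ in the common open set $U_p\subset\A_{\F_p}^1$ of lissity we have
\begin{equation*}
a_{1,p}(\lambda)^m a_{2,p}(\lambda)^n \;=\; p^{(m+n)/2}\,\tr\!\bigl(\Frob_\lambda\mid \mathcal{G}_{p,\overline{\eta}}\bigr).
\end{equation*}
The finitely many $\lambda\in\F_p\setminus U_p(\F_p)$ contribute $O(p^{(m+n)/2})$ to the full sum by Hasse's bound, which becomes $O(1/p)$ after normalization. Next, by Lemma~\ref{FMonoChar} the sheaves $\mathcal{F}_{i,p}$ are punctually pure of weight $0$, self-dual, tamely ramified away from $U_p$, and have geometric monodromy group $\SL_2$ (since the $j$-invariants are nonconstant). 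Combining the hypothesis $\mathcal{F}_{1,p}\not\cong_{\geom}\mathcal{F}_{2,p}\otimes\mathcal{L}_p$ with Theorem~\ref{GKRCriterion} shows that the geometric monodromy of $\mathcal{F}_{1,p}\oplus\mathcal{F}_{2,p}$ equals $\SL_2\times\SL_2$.

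\smallskip
\noindent Now, Lemma~\ref{oneRepMult} applied twice gives the $\SL_2\times\SL_2$-decomposition
\begin{equation*}
\rho_1^{\otimes m}\otimes\rho_2^{\otimes n}\;\cong\;\bigoplus_{r,s} n_m(r)n_n(s)\,(\Sym^r\rho_1\boxtimes\Sym^s\rho_2),
\end{equation*}
whose summands are irreducible $\SL_2\times\SL_2$-representations. Lemma~\ref{geoMonTens} then transfers each to a geometrically irreducible summand of $\mathcal{G}_p$. The trivial geometric summand $(r,s)=(0,0)$ has multiplicity $n_m(0)n_n(0)$, equal to $C(m_1)C(n_1)$ when $m=2m_1,\ n=2n_1$ are even and to $0$ otherwise. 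Applying Theorem~\ref{GrothendieckLefschetz} together with Lemma~\ref{extremalCohomologyDescription} (which kills $H_c^0$ and $H_c^2$ on every nontrivial geometric isotypic piece), the only $H_c^2$ contribution comes from the trivial isotypic component: its rank is $C(m_1)C(n_1)$ (when both $m,n$ even), and because the self-duality of each $\mathcal{F}_{i,p}$ is induced by the Galois-equivariant Weil pairing, the resulting trivial sheaf is arithmetically trivial as well, so Frobenius acts as the identity on its stalk. With the Tate twist giving a factor of $p$ on $H_c^2$, this piece contributes $C(m_1)C(n_1)\cdot p$.

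\smallskip
\noindent For each nontrivial geometrically irreducible summand, tame ramification is preserved under tensor products, so Theorem~\ref{EulerChar} bounds $\dim H_c^1$ by a constant independent of $p$ (for $p$ avoiding a fixed finite set), and Theorem~\ref{DelignePurity} then gives $|\tr(\Frob_p\mid H_c^1)|=O(p^{1/2})$. Summing over the finitely many summands and combining with the $H_c^2$ computation yields
\begin{equation*}
\sum_{\lambda\in U_p(\F_p)}\tr(\Frob_\lambda\mid\mathcal{G}_{p,\overline{\eta}})\;=\;\begin{cases}C(m_1)C(n_1)\,p+O(p^{1/2}) & \text{if }m,n\text{ are even,}\\ O(p^{1/2}) & \text{otherwise,}\end{cases}
\end{equation*}
which after multiplication by $p^{(m+n)/2}$ and division by $p^{(m+n)/2+1}$ gives the limit. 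The main obstacle is the argument at the trivial summand in Step~5: one must verify that the Galois-equivariant Weil pairing upgrades the geometric triviality of the $(0,0)$-isotypic component to an honest trivial sheaf on which $\Frob_p$ acts as the identity, so that the leading constant is \emph{exactly} $C(m_1)C(n_1)$ rather than merely $O(1)$; the rest of the proof consists of uniform-in-$p$ bookkeeping on ranks and bad points to make Theorem~\ref{EulerChar} apply.
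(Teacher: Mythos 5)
Your proof follows essentially the same route as the paper's: rewrite the mixed moments as Frobenius trace sums on $\mathcal{G}_{p,m,n}=\mathcal{F}_{1,p}^{\otimes m}\otimes\mathcal{F}_{2,p}^{\otimes n}$, decompose via Lemma~\ref{oneRepMult}, Theorem~\ref{GKRCriterion}, and Lemma~\ref{geoMonTens} (this is precisely the content of the paper's Lemma~\ref{multTwoRep}), and then control the nontrivial isotypic pieces with Grothendieck--Lefschetz, Lemma~\ref{extremalCohomologyDescription}, Deligne purity, and Grothendieck--Ogg--Shafarevich, exactly as in Lemma~\ref{mixedMomNonTrivial}. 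The one point where you go beyond the paper's write-up is the explicit flag that the geometrically trivial $(0,0)$-isotypic piece must be shown to be \emph{arithmetically} trivial so that the leading constant is exactly $n_\varepsilon = C(m_1)C(n_1)$; the paper's proof of Proposition~\ref{mixedMoments} simply writes $\tr(\Frob_\lambda\mid\mathcal{G}_{p,m,n}) = n_\varepsilon + (\text{nontrivial traces})$ without comment. Your heuristic via the Galois-equivariant Weil pairing is on the right track; the cleanest formulation is that after the half-Tate twist each $\mathcal{F}_{i,p}$ has trivial determinant, so the arithmetic monodromy of $\mathcal{F}_{1,p}\oplus\mathcal{F}_{2,p}$ lies in $\SL_2\times\SL_2$ and must therefore coincide with the geometric monodromy $\SL_2\times\SL_2$, whence the geometrically trivial summand is automatically a trivial representation of the full Galois group and $\Frob_p$ acts as the identity on it.
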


Note that the case $n=0$ computes the moments of one-parameter families of elliptic curves with nonconstant $j$-invariant (without need of choice for $E_{2,\lambda},$ by the same proof), recovering the result of Michel \cite[Section 3]{RangMoyen}, of which this is a generalization.

In order to prove this proposition, we first compute the multiplicity of the trivial representation in the $\ell$-adic sheaves $\mathcal{G}_{p,m,n}:=\mathcal{F}_{1,p}^{\otimes m}\otimes\mathcal{F}_{2,p}^{\otimes n}.$ To make this precise, let $U$ be the subset of $\A_{\Z}^1$ on which both $\mathcal{F}_1$ and $\mathcal{F}_2$ are lisse (see Lemma~\ref{FLisse}) and for each prime $p,$ denote by $U_p$ and $\overline{U_p}$ the base-changes of $U$ to $\F_p$ and $\overline{\F_p}$ respectively. In this notation, we have the following computation.

\begin{lemma}\label{multTwoRep}
    If $m,n\geq 0$ and $p\nmid N,$ then the multiplicity of the trivial representation in the direct-sum decomposition of $\mathcal{G}_{p,m,n}$ as a representation of $G^{\geom}$ is given by
    $$
    n_{\varepsilon}(\mathcal{G}_{m,n,p}) = \begin{cases}
    C(m_1)C(n_1) &\ \ \ \text{ if } m=2m_1\text{ and }n=2n_1\text{ are even} \\ 
    0 &\ \ \ \text{ otherwise}.
    \end{cases}
    $$
\end{lemma}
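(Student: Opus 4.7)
The plan is to reduce the multiplicity computation from $G^{\geom}$ down to the geometric monodromy group of the sum $\mathcal{F}_{1,p}\oplus\mathcal{F}_{2,p}$, use the Goursat--Kolchin--Ribet criterion to identify that group, and then use the fact that invariants factor across a product of groups.

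First, I would verify the three hypotheses of Theorem~\ref{GKRCriterion} for the pair $(\mathcal{F}_{1,p},\mathcal{F}_{2,p})$ at primes $p\nmid N$. The purity of weight $0$, self-duality, and rank-$2$ conditions are all part of Lemma~\ref{FMonoChar}; the fact that each individual geometric monodromy group equals $\SL_2$ is the nonconstant $j$-invariant assumption combined with part (1) of Lemma~\ref{FMonoChar}; and the twist-inequivalence condition is exactly the standing hypothesis $\mathcal{F}_{1,p}\not\cong_{\text{geom}}\mathcal{F}_{2,p}\otimes\mathcal{L}_p$. Applying Theorem~\ref{GKRCriterion} yields that the geometric monodromy group of $\mathcal{F}_{1,p}\oplus\mathcal{F}_{2,p}$ is $\SL_2\times\SL_2$, i.e.\ the Zariski closure of the image of $G^{\geom}$ in $\GL(\mathcal{F}_{1,\overline{\eta}})\times\GL(\mathcal{F}_{2,\overline{\eta}})$ is the full product.

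Next, observe that the representation $\mathcal{G}_{p,m,n}$ of $G^{\geom}$ factors as the composition of $G^{\geom}\to\SL_2\times\SL_2$ with the external tensor product $\rho_1^{\otimes m}\boxtimes\rho_2^{\otimes n}$, where $\rho_i$ is the standard representation of the $i$-th $\SL_2$ factor. The multiplicity of the trivial representation of $G^{\geom}$ equals the dimension of the invariant subspace. Since being fixed by a given vector is a Zariski-closed condition on $\GL(\mathcal{G}_{p,m,n})$, the invariants under $G^{\geom}$ coincide with the invariants under the Zariski closure $\SL_2\times\SL_2$. Because the two factors act on disjoint tensor slots,
\[
\bigl(V_1^{\otimes m}\otimes V_2^{\otimes n}\bigr)^{\SL_2\times\SL_2} \;=\; \bigl(V_1^{\otimes m}\bigr)^{\SL_2}\otimes \bigl(V_2^{\otimes n}\bigr)^{\SL_2}.
\]
Applying Lemma~\ref{oneRepMult} to each factor gives dimension $C(m_1)$ when $m=2m_1$ is even and $0$ otherwise, and similarly for the $n$-factor, producing the claimed formula.

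The genuinely hard step is the first one, namely establishing that $G_{\geom}(\mathcal{F}_{1,p}\oplus\mathcal{F}_{2,p})=\SL_2\times\SL_2$; everything afterwards is essentially formal (passage from a representation to its invariants under the Zariski closure, and the product structure of invariants under a product group). The Goursat--Kolchin--Ribet input is what actually uses the genericity of the pair, and the non-twist-equivalence hypothesis is precisely what rules out the ``diagonal'' subgroups that would otherwise inflate the space of invariants and invalidate the Catalan-number count.
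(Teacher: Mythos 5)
Your proposal is correct and takes essentially the same route as the paper: both verify the hypotheses of Theorem~\ref{GKRCriterion} to conclude that the geometric monodromy group of $\mathcal{F}_{1,p}\oplus\mathcal{F}_{2,p}$ is $\SL_2\times\SL_2$, and both then reduce the multiplicity count to Lemma~\ref{oneRepMult} applied factor by factor. The only cosmetic difference is that you compute the invariant subspace directly via the product-group factorization $(V_1^{\otimes m}\otimes V_2^{\otimes n})^{\SL_2\times\SL_2}=(V_1^{\otimes m})^{\SL_2}\otimes(V_2^{\otimes n})^{\SL_2}$, while the paper first decomposes $\rho_{i,p}^{\otimes m}$ into symmetric powers and invokes Lemma~\ref{geoMonTens} to keep track of which tensor pieces stay irreducible; your passage from $G^{\geom}$-invariants to Zariski-closure invariants is exactly what Lemmas~\ref{RepZar} and \ref{geoMonTens} encode.
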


\begin{proof}
    Denote by $\rho_{i,p}$ the representation
    $$
    \rho_{i,p}:G^{\geom}\to\mathcal{F}_{i,p,\overline{\eta}}.
    $$
    As representations of $G^{\geom},$ if $i=1,2,$  we have the decompositions
    \begin{equation}\label{decompEq}
    \rho_{i,p}^{\otimes m} \cong \left(\bigoplus\limits_{k=1}^{n_i}n_m(i,k)\cdot \pi_{i,k,p}\circ\rho_{i,p}\right)\oplus n_m(i,0)\varepsilon,
    \end{equation}
    where the representations $\pi_{i,k}$ are irreducible pairwise nonisomorphic nontrivial representations of $\rho_{i,p}(G^{\geom})$ and where $\varepsilon$ is the trivial representation.  Part (1) of Lemma~\ref{FMonoChar} states that the Zariski closure of $\rho_{i,p}(G^{\geom})=\SL_2.$ Therefore, by Lemma~\ref{RepZar}, we have that the $\pi_{i,k,p}$ are precisely the symmetric power representations and $n_m(i,k)=n_{m}(k)$ are the multiplicities in Lemma~\ref{oneRepMult}. 
    Now, note that by Theorem~\ref{GKRCriterion}, we have that the Zariski closure of $(\rho_{1,p}\oplus\rho_{2,p})(G^{\geom})$ is given by $\SL_2\times\SL_2.$ Therefore, Lemma~\ref{geoMonTens} implies that for $1\leq k_1\leq n_1, 1\leq k_2\leq n_2,$ we have that
    $
    \pi_{1,k_1,p}\circ\rho_{1,p}\otimes \pi_{1,k_2,p}\circ\rho_{2,p}
    $
    is an irreducible representation of $G^{\geom}.$ It is also clear that $\pi_{1,k_1,p}\otimes \varepsilon$ and $\varepsilon\otimes\pi_{1,k_2,p}$ are irreducible.
    Therefore, the multiplicity of the trivial representation in the decomposition of $\mathcal{G}_{m,n,p}$ is equal to $n_m(1,0)n_m(2,0)$ and then Lemma~\ref{oneRepMult} shows our claim.
\end{proof}

The next lemma computes the asymptotics of trace sums of the nontrivial irreducible representations arising in the direct sum decomposition of $\mathcal{G}_{p,m,n}.$ More explicitly, the proof of Lemma~\ref{multTwoRep} shows that these representations are of the form $\Sym^{m_1}(\mathcal{F}_1)\otimes\Sym^{n_1}(\mathcal{F}_2),$ where $m_1\leq m,n_1\leq n,$ with appropriate multiplicities. Therefore, the following lemma proves that if $m_1$ and $n_1$ are not both zero, then we have
$$
\lim\limits_{p\to\infty}\frac{1}{p}\sum\limits_{\lambda\in\F_p} U_{m_1}\left(\frac{a_{1,p}(\lambda)}{2\sqrt{p}}\right)U_{n_1}\left(\frac{a_{2,p}(\lambda)}{2\sqrt{p}}\right)=0.
$$

\begin{lemma}\label{mixedMomNonTrivial}
    If $p$ is a prime and $m,n\geq 0,$ then we have
    $$
    \sum\limits_{\pi}\sum\limits_{\lambda\in U_p(\F_p)}\tr(\Frob_\lambda|\pi) = O_{m,n}(p^{1/2})\ \ \text{ as }p\to\infty,
    $$
   where $\pi$ runs over the nontrivial irreducible representations arising in the direct sum decomposition of $\mathcal{G}_{p,m,n}.$
\end{lemma}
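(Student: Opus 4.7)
The plan is to apply the Grothendieck--Lefschetz trace formula (Theorem~\ref{GrothendieckLefschetz}) to each nontrivial irreducible $\ell$-adic sheaf $\pi$ appearing in the decomposition of $\mathcal{G}_{p,m,n}$, and then control the cohomology groups using Lemma~\ref{extremalCohomologyDescription}, Deligne's purity (Theorem~\ref{DelignePurity}), and the Grothendieck--Ogg--Shafarevich formula (Theorem~\ref{EulerChar}).

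First I would observe that, by the proof of Lemma~\ref{multTwoRep}, every nontrivial irreducible $\pi$ in the decomposition of $\mathcal{G}_{p,m,n}$ is geometrically irreducible and geometrically nonconstant, so Lemma~\ref{extremalCohomologyDescription} gives $H_c^0(U_{p,\overline{\F_p}},\pi)=0$ and $H_c^2(U_{p,\overline{\F_p}},\pi)=0$. Moreover, since $\mathcal{F}_{1,p}$ and $\mathcal{F}_{2,p}$ are punctually pure of weight $0$, so are all such $\pi$ (weights add under tensor product and are preserved under taking subquotients). The Grothendieck--Lefschetz trace formula therefore reduces the sum over $\lambda\in U_p(\F_p)$ to
\begin{equation*}
\sum_{\lambda\in U_p(\F_p)}\tr(\Frob_\lambda|\pi) = -\tr\!\bigl(\Frob_p\,\big|\,H_c^1(U_{p,\overline{\F_p}},\pi)\bigr).
\end{equation*}

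Next I would bound the right-hand side. By Deligne's theorem (Theorem~\ref{DelignePurity}), every eigenvalue of $\Frob_p$ on $H_c^1(U_{p,\overline{\F_p}},\pi)$ has complex absolute value at most $p^{1/2}$, so
\begin{equation*}
\bigl|\tr(\Frob_p|H_c^1(U_{p,\overline{\F_p}},\pi))\bigr| \le p^{1/2}\cdot\dim H_c^1(U_{p,\overline{\F_p}},\pi).
\end{equation*}
To control this dimension uniformly in $p$, I would apply the Grothendieck--Ogg--Shafarevich formula. The sheaf $\pi$, being a subquotient of a tensor product of the tamely ramified sheaves $\mathcal{F}_{1,p}$ and $\mathcal{F}_{2,p}$ (by part (2) of Lemma~\ref{FMonoChar}), is itself tamely ramified on $\mathbb{P}^1_{\overline{\F_p}}\setminus U_p$. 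Hence by Theorem~\ref{EulerChar} (and the vanishing of $H_c^0$ and $H_c^2$),
\begin{equation*}
\dim H_c^1(U_{p,\overline{\F_p}},\pi) = \rank(\pi)\cdot\bigl(\#(\mathbb{P}^1\setminus U_p)(\overline{\F_p})-2\bigr).
\end{equation*}
The rank of $\pi$ is bounded by $\rank(\mathcal{G}_{p,m,n})=2^{m+n}$, which depends only on $m$ and $n$. The number of bad points $\#(\mathbb{P}^1\setminus U_p)(\overline{\F_p})$ is bounded, for $p\nmid N$, by the number of geometric points of the divisor cut out by $6\ell N\Delta_1^1(\lambda)\Delta_2^1(\lambda)$, which is an $m,n$-independent (in fact $p$-independent for large $p$) constant depending only on the coefficient polynomials defining $E_{1,\lambda}$ and $E_{2,\lambda}$.

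Putting these bounds together gives $\tr(\Frob_p|H_c^1(U_{p,\overline{\F_p}},\pi)) = O_{m,n}(p^{1/2})$ for each individual $\pi$, and summing over the finitely many irreducible components (whose number is bounded in terms of $m$ and $n$ only, since $\dim\mathcal{G}_{p,m,n}=2^{m+n}$) yields the claimed $O_{m,n}(p^{1/2})$ bound. The only real subtlety is verifying tame ramification and the purity/rank bounds pass through tensor products and subquotients uniformly, but these are standard consequences of the formalism recalled in Section~\ref{Section3} together with parts (1) and (2) of Lemma~\ref{FMonoChar}.
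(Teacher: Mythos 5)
Your proof is correct and follows essentially the same route as the paper's: apply the Grothendieck--Lefschetz trace formula, kill $H_c^0$ and $H_c^2$ by geometric irreducibility and nonconstancy (Lemma~\ref{extremalCohomologyDescription}), bound the eigenvalues on $H_c^1$ via Deligne purity (Theorem~\ref{DelignePurity}), and bound $\dim H_c^1$ via Grothendieck--Ogg--Shafarevich (Theorem~\ref{EulerChar}) using tameness from Lemma~\ref{FMonoChar} and the rank bound $\sum_\pi\rank(\pi)\le\rank(\mathcal{G}_{p,m,n})=2^{m+n}$. The extra remarks you make about purity and tame ramification being inherited by subquotients of tensor products are exactly the implicit verifications the paper leaves to the reader.
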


\begin{proof}
    If $\pi$ is an irreducible representation as in the statement, it is clear that $\pi,$ viewed as an $\ell$-adic sheaf, is lisse on $U_p.$ Therefore, Theorem~\ref{GrothendieckLefschetz} gives us that
    $$
    \sum\limits_{\lambda\in U_p(\F_p)}\tr(\Frob_\lambda|\pi)=\sum\limits_{i=0}^2\tr(\Frob_p|H_c^i(\overline{U_p},\pi)).
    $$
    By Lemma~\ref{extremalCohomologyDescription}, we have that $H_c^0=H_c^2=0,$ and therefore,
    $$
    \sum\limits_{\lambda\in U_p}\tr(\Frob_\lambda|\pi)=\tr(\Frob_p|H_c^1(\overline{U_p},\pi)).
    $$
    Theorems~\ref{DelignePurity}, \ref{EulerChar} and Lemma~\ref{FMonoChar} then imply that
    $$
    \left|\sum\limits_{\lambda\in U_p}\tr(\Frob_\lambda|\pi)\right|\leq p^{1/2}\cdot\rank(\pi)\cdot(\#(\mathbb{P}^1\setminus\overline{U_p})(\overline{\F_p})-2).
    $$
    Since $\sum\limits_{\pi}\rank(\pi)\leq\rank(\mathcal{G}_{p,m,n})=2^{m+n},$ the claim follows.
\end{proof}

\begin{remark}\label{Hc1ModRemark}
    In the proof of Lemma~\ref{mixedMomNonTrivial}, we show that as $p\to\infty,$ we have
    $$
    \sum\limits_{\lambda\in\F_p} U_{m_1}\left(\frac{a_{1,p}(\lambda)}{2\sqrt{p}}\right)U_{n_1}\left(\frac{a_{2,p}(\lambda)}{2\sqrt{p}}\right) = O(p^{1/2})
    $$
    by identifying the sum with the trace of Frobenius on the cohomology group $H_c^1(\overline{U_p},\pi),$ where $\pi=\Sym^{m_1}(\mathcal{F}_1)\otimes\Sym^{n_1}(\mathcal{F}_2).$
    Now, for $d\in\{2,3,4\},$ let $n_1=0$ and let $E_{1,\lambda}$ be the family $\widetilde{E}_{d,\lambda}$ defined in Section~\ref{Section2}. In this case, we have an explicit description of the Frobenius traces on this ``mysterious'' cohomology group in terms of the traces of Hecke operators on spaces of cusp frorms. For example, if $d=3$ and $m_1=2r$ is even, then (see Section 4.2 of \cite{grove2024hypergeometricmomentshecketrace}) the trace of the Frobenius map is given by 
    $$
    \tr(\Frob_p|H_c^1(\overline{U_p},\pi))=-\frac{\Tr_{2r+2}(\Gamma_1(3),p)-c(2r)}{p^r},
    $$
    where $\Tr_{\nu}(\Gamma,p)$ is the trace of the $p$-th Hecke operator on the space of cusp forms of weight $\nu$ over $\Gamma,$ and where $c(2r)$ is an explicit expression in terms of $p.$  The same remark applies to odd $m_1$ and to the elliptic curves $\widetilde{E}_{d,\lambda}.$ The advantage of the modular method is that one understands the trace of Frobenius on the cohomology very explicitly. On the other hand, the advantage of the machinery of \'etale cohomology is that one can derive results on distributions in a more general setting where there is no underlying modular framework.
\end{remark}

Now we prove Proposition~\ref{mixedMoments}.

\begin{proof}[Proof of Proposition~\ref{mixedMoments}]
By (\ref{traceF}), we have that
$$
\frac{1}{p^{m/2+n/2+1}}\sum\limits_{\lambda\in \F_p}a_{1,p}(\lambda)^m a_{2,p}(\lambda)^n=\frac{1}{p}\sum\limits_{\lambda\in U_p(\F_p)}\tr(\Frob_\lambda|\mathcal{G}_{p,m,n})+O_{m,n}(p^{-1}),
$$
as $p\to\infty.$ But then, Lemma~\ref{mixedMomNonTrivial} implies that
$$
\frac{1}{p}\sum\limits_{\lambda\in U_p(\F_p)}\tr(\Frob_\lambda|\mathcal{G}_{p,m,n}) = n_{\varepsilon}(\mathcal{G}_{m,n,p})+O_{m,n}(p^{-1/2}),
$$
where $n_{\varepsilon}$ is the multiplicity of the trivial representation in $\mathcal{G}_{m,n,p}.$ Combining this with Lemma~\ref{multTwoRep}, we obtain our claim.
\end{proof}

In order to obtain the necessary moments for Theorem~\ref{theorem4}, we require the following modification of Proposition~\ref{mixedMoments}.

\begin{proposition}\label{3F2MomEC}
    If $m\geq 0,$ then we have
    $$
    \lim\limits_{p\to\infty}\frac{1}{p^{m+1}}\sum\limits_{\lambda\in\F_p}(\phi_p(\lambda+1)(a_p^{\Cl}(\lambda)^2-p))^m=\begin{cases}
        \sum\limits_{i=0}^m (-1)^i\binom{m}{i}\frac{(2i)!}{i!(i+1)!} &\ \ \ \text{if }m\text{ is even} \\
        0 &\ \ \ \text{if }m\text{ is odd.}
    \end{cases}
    $$
\end{proposition}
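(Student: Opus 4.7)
The plan is to adapt the strategy from Proposition~\ref{mixedMoments} to the single family $E^{\Cl}_\lambda,$ using the fact that $a^{\Cl}_p(\lambda)^2 - p$ is, up to normalization, the trace of Frobenius on the symmetric square of the $\ell$-adic sheaf attached to the Clausen family. Concretely, let $\mathcal{F}^{\Cl}_p := (R^1\pi^{\Cl}_!\Q_\ell)(\tfrac{1}{2}),$ lisse on $U_p \subset \A^1_{\F_p}$ (the complement of the bad fibers $\lambda = 0, -1$), so that $\tr(\Frob_\lambda | \mathcal{F}^{\Cl}_{p,\overline\eta}) = a^{\Cl}_p(\lambda)/\sqrt{p}.$ Since $U_2(x) = 4x^2 - 1,$ we obtain
\begin{equation*}
\tr\!\bigl(\Frob_\lambda \mid \Sym^2\mathcal{F}^{\Cl}_{p,\overline\eta}\bigr) \;=\; U_2\!\left(\tfrac{a^{\Cl}_p(\lambda)}{2\sqrt{p}}\right) \;=\; \frac{a^{\Cl}_p(\lambda)^2 - p}{p}.
\end{equation*}
Using $\phi_p(\lambda+1)^m = 1$ for $m$ even (on $U_p$) and $\phi_p(\lambda+1)^m = \phi_p(\lambda+1)$ for $m$ odd, together with Lemma~\ref{KummerSheafLemma} for the latter, the normalized moment $p^{-m-1}\sum_\lambda \bigl(\phi_p(\lambda+1)(a^{\Cl}_p(\lambda)^2 - p)\bigr)^m$ equals $p^{-1}\sum_{\lambda \in U_p(\F_p)}\tr(\Frob_\lambda|\mathcal{H}_{m,p})$ up to a $O_m(p^{-1})$ boundary error, where $\mathcal{H}_{m,p} := (\Sym^2\mathcal{F}^{\Cl}_p)^{\otimes m}$ in the even case and $\mathcal{H}_{m,p} := (\Sym^2\mathcal{F}^{\Cl}_p)^{\otimes m} \otimes [+1]^*\mathcal{L}_\phi$ in the odd case.

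For $m$ even, the argument mirrors that of Proposition~\ref{mixedMoments}. By Lemma~\ref{FMonoChar}, the geometric monodromy group of $\mathcal{F}^{\Cl}_p$ is $\SL_2,$ so Lemma~\ref{RepZar} reduces the decomposition of $\mathcal{H}_{m,p}$ into $G^{\geom}$-irreducibles to the Clebsch--Gordan decomposition of $(\Sym^2\rho)^{\otimes m}$ for the standard $\SL_2$-representation $\rho.$ By Lemma~\ref{symsquareRepMult} (with $(-1)^m = 1$), the multiplicity of the trivial representation is precisely $\sum_{i=0}^m (-1)^i\binom{m}{i}\tfrac{(2i)!}{i!(i+1)!};$ each nontrivial irreducible summand is a symmetric power $\Sym^k\rho$ with $k \geq 1,$ to which we apply Grothendieck--Lefschetz (Theorem~\ref{GrothendieckLefschetz}), Lemma~\ref{extremalCohomologyDescription}, Deligne's purity (Theorem~\ref{DelignePurity}), the tame-ramification part of Lemma~\ref{FMonoChar}, and the Euler characteristic bound (Theorem~\ref{EulerChar}) to get a total contribution of $O_m(p^{1/2}).$ Dividing by $p$ yields the stated even-$m$ limit.

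For $m$ odd, the main obstacle is to show $\mathcal{H}_{m,p}$ has no trivial $G^{\geom}$-subrepresentation, so that the entire sum is $O_m(p^{1/2})$ and the limit is $0.$ Every $G^{\geom}$-irreducible summand of $(\Sym^2\mathcal{F}^{\Cl}_p)^{\otimes m}$ is of the form $\Sym^k \rho$ for some $k \geq 0$ (by the $\SL_2$-monodromy together with Lemma~\ref{RepZar}). Tensoring with the rank-$1$ sheaf $[+1]^*\mathcal{L}_\phi$ preserves irreducibility by Lemma~\ref{twistIrreducibility}. The trivial summands ($k=0$) become copies of $[+1]^*\mathcal{L}_\phi,$ which is a nontrivial representation of $G^{\geom}$ since $\phi_p$ is the nontrivial quadratic character (the Kummer sheaf $\mathcal{L}_\phi$ is geometrically nontrivial). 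For $k \geq 1,$ the summand $\Sym^k \rho \otimes [+1]^*\mathcal{L}_\phi$ is irreducible of rank $k+1 \geq 2,$ hence cannot be trivial. Therefore every irreducible component of $\mathcal{H}_{m,p}$ is geometrically nontrivial, and applying the same cohomological machinery (Theorem~\ref{GrothendieckLefschetz}, Lemma~\ref{extremalCohomologyDescription}, Theorem~\ref{DelignePurity}, Theorem~\ref{EulerChar}, using that $[+1]^*\mathcal{L}_\phi$ is tame at $\{-1,\infty\}$ by Lemma~\ref{KummerSheafLemma}) shows the sum is $O_m(p^{1/2}),$ completing the proof.
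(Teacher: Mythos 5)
Your proof follows the same route as the paper's: for $m$ even, decompose $(\Sym^2\mathcal{F}^{\Cl}_p)^{\otimes m}$ into $G^{\geom}$-irreducibles, read off the trivial multiplicity from Lemma~\ref{symsquareRepMult}, and bound the nontrivial summands via Grothendieck--Lefschetz, Deligne purity, and the Euler characteristic estimate; for $m$ odd, twist by $[+1]^*\mathcal{L}_\phi$ and use Lemma~\ref{twistIrreducibility} together with geometric nontriviality of the Kummer sheaf to show every irreducible summand is nontrivial. This is exactly the argument in the paper, with a slightly more explicit justification of why the twisted summands are never geometrically trivial (the paper dismisses this as clear), so no further comparison is needed.
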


\begin{proof}
    If $m$ is even, then we have that
    $$
    \frac{1}{p^m}\sum\limits_{\lambda\in\F_p}(\phi_p(\lambda+1)(a_p^{\Cl}(\lambda)^2-p))^m = \sum\limits_{\lambda\in U_p(\F_p)}\tr(\Frob_\lambda|(\Sym^2\mathcal{F})^{\otimes m}) + O_m(p^{-1}).
    $$
    Using Lemma~\ref{multTwoRep}, we decompose the right-hand sum as
    \begin{align*}
    \sum\limits_{\lambda\in U_p(\F_p)}\tr(\Frob_\lambda|(\Sym^2\mathcal{F})^{\otimes m}) &= \sum\limits_{\lambda\in U_p(\F_p)}\sum\limits_{\pi}n_2(\pi)\cdot\tr(\Frob_\lambda
    |\pi) \\ 
    &+\left(\sum\limits_{i=0}^m (-1)^i\binom{m}{i}\frac{(2i)!}{i!(i+1)!} \right)(p-2),
    \end{align*}
    where $\mathcal{F}$ is the sheaf associated to the Clausen curves $E_\lambda^{\Cl}$ as in Section~\ref{EtaleSheafFEC} and where $\pi$ ranges over $\Sym^r\mathcal{F}$ with $r\leq m$ and $r\equiv 0\pmod 2.$ Using exactly the same argument as in the proof of Lemma~\ref{mixedMomNonTrivial}, we obtain our claim.
    
    If $m$ is odd, note that
    $$
    \frac{1}{p^m}\sum\limits_{\lambda\in\F_p}(\phi_p(\lambda+1)(a_p^{\Cl}(\lambda)^2-p))^m = \sum\limits_{\lambda\in U_p(\F_p)}\tr(\Frob_\lambda|(\Sym^2\mathcal{F})^{\otimes m}\otimes ([+1]^\ast\mathcal{L}_\phi) + O_m(p^{-1}),
    $$
    where $\mathcal{L}_\phi$ is the Kummer sheaf associated to the Legendre character $\phi_p.$ Decomposing as in the case for even moments and using the orthogonality of characters, we have that
    $$
    \frac{1}{p^m}\sum\limits_{\lambda\in\F_p}(\phi_p(\lambda+1)(a_p^{\Cl}(\lambda)^2-p))^m = \sum\limits_{\lambda\in U_p}\sum\limits_{\pi}n_2(\pi)\cdot\tr(\Frob_\lambda|\pi\otimes[+1]^\ast\mathcal{L}_\phi) + O_m(1).
    $$
    Now, note that these irreducible representations $\pi$ are symmetric powers of $\mathcal{F}.$ By Lemma~\ref{twistIrreducibility}, we have that $\pi\otimes [+b]^\ast\mathcal{L}_\phi$ is irreducible. Moreover, it is clear that this representation is never trivial. Therefore, the same argument as in the proof of Lemma~\ref{mixedMomNonTrivial} shows our claim.
\end{proof}

\begin{remark}
    Just as in Proposition~\ref{mixedMoments}, the cohomology groups that show up in the proof of Proposition~\ref{3F2MomEC} have explicit traces using modular forms. For more details, see Proposition 2.2 and the proof of Proposition 6.3 in \cite{3F2Explicit}.
\end{remark}

Finally, to compute the moments in Theorem~\ref{theorem1} in terms of the mixed moments in Theorem~\ref{theorem2}, we require the following combinatorial lemma.

\begin{lemma}\label{CombMom}
    If $m\geq 1,$ then we have
    $$
    C(m)C(m+1) = \sum_{s=0}^{m} \binom{2m}{2s} C(m-s)C(s).
    $$
\end{lemma}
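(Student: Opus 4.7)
The plan is to recognize this identity as a disguised Chu--Vandermonde convolution; the argument is elementary but requires careful bookkeeping with factorials. First, using $C(n) = (2n)!/(n!(n+1)!)$, the factor $\binom{2m}{2s} = (2m)!/((2s)!(2m-2s)!)$ combines with $C(s)C(m-s)$ so that the $(2s)!$ and $(2m-2s)!$ factors cancel completely, giving
$$\binom{2m}{2s}C(s)C(m-s) \;=\; \frac{(2m)!}{s!\,(s+1)!\,(m-s)!\,(m-s+1)!}.$$
A parallel computation shows $C(m)C(m+1) = (2m)!\,(2m+2)!/(m!\,(m+1)!^2\,(m+2)!)$.

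Next, I would multiply both sides of the claimed identity by $(m+1)!^2/(2m)!$. Using the two elementary rewritings
$$\frac{(m+1)!}{s!\,(m-s+1)!} = \binom{m+1}{s}, \qquad \frac{(m+1)!}{(s+1)!\,(m-s)!} = \binom{m+1}{s+1},$$
the right-hand side becomes $\sum_{s=0}^{m}\binom{m+1}{s}\binom{m+1}{s+1}$, while the left-hand side collapses to $(2m+2)!/(m!\,(m+2)!) = \binom{2m+2}{m}$. Thus the lemma reduces to the single identity
$$\sum_{s=0}^{m}\binom{m+1}{s}\binom{m+1}{s+1} = \binom{2m+2}{m}.$$

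This last identity follows immediately from the Chu--Vandermonde convolution $\sum_{k}\binom{m+1}{k}\binom{m+1}{m-k} = \binom{2m+2}{m}$ via the reflection $\binom{m+1}{m-k} = \binom{m+1}{k+1}$, noting that the nominal $k=m+1$ term contributes nothing since $\binom{m+1}{m+2}=0$. The main ``obstacle'' here is purely bookkeeping; there is no deeper conceptual issue. A more conceptual alternative would be to compare the moment expansions in Theorems \ref{theorem1} and \ref{theorem2}, but since this lemma is needed precisely to bridge those two results, the direct combinatorial proof above is the appropriate route.
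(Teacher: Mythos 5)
Your proof is correct and takes essentially the same route as the paper: expand the Catalan numbers into factorials, cancel the $(2s)!$ and $(2m-2s)!$ against $\binom{2m}{2s}$, clear a common factor, and invoke Chu--Vandermonde. The only (cosmetic) difference is the normalization: the paper extracts $\frac{(2m)!}{m!\,(m+2)!}$ to reach $\sum_s \binom{m+2}{s+1}\binom{m}{m-s}$, whereas you extract $\frac{(2m)!}{(m+1)!^2}$ to reach $\sum_s \binom{m+1}{s}\binom{m+1}{s+1}$; both reduce to the same Vandermonde evaluation $\binom{2m+2}{m+1}=\binom{2m+2}{m}$.
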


\begin{proof}
If $s\geq 0,$ by definition of the Catalan numbers, we have
$$
C(s) = \frac{1}{s+1} \binom{2s}{s}.
$$
Expanding the right-hand side, we then have that
$$
\sum_{s=0}^{m} \binom{2m}{2s} C(m-s)C(s) = \sum_{s=0}^{m} \frac{1}{(s+1)(m-s+1)} \frac{(2m)!}{(s!)^{2}((m-s)!)^{2}}.
$$
We rewrite this expression as
$$
\sum_{s=0}^{m} \binom{2m}{2s} C(m-s)C(s) = \frac{(2m)!}{m!(m+2)!} \sum_{s=0}^{m} \binom{m+2}{s+1} \binom{m}{m-s}.
$$
An application of the Chu--Vandermonde identity (see Example 1.17 of \cite{EnumCombinatorics}) then gives that
$$
\sum_{s=0}^{m} \binom{2m}{2s} C(m-s)C(s) = \frac{(2m)!}{m!(m+1)!} \cdot \frac{(2m+2)!}{(m+1)!(m+2)!}
$$
which proves our claim.
\end{proof}

\section{Tate Modules and Isogenies}\label{Section6}

Here we discuss the connection between the (untwisted) sheaves $\mathcal{F}:=R^1\pi_!\Q_\ell$ discussed in Section~\ref{EtaleSheafFEC} and $\ell$-adic Tate modules, and then express the non-isomorphism condition required for Proposition~\ref{mixedMoments} in terms of isogenies.

First, if $K$ is a field, $E/K$ is an elliptic curve, and $\ell\neq\text{char}(K)$ is prime, recall that the rational $\ell$-adic Tate module is defined by
$$
V_\ell(E):=(\varprojlim E[\ell^n])\otimes_{\Z_\ell}\Q_\ell
$$
and that $V_\ell(E)$ admits a natural action of the underlying Galois group $\Gal(\overline{K}/K)$ (see III.7 of \cite{SilvermanAEC}).  It turns out that the sheaves we consider ``are'' nothing but duals of those rational Tate modules when $E_\lambda$ is viewed as an elliptic curve over $\F_p(\lambda).$

\begin{lemma}\label{TateSheaf}
    If $E_\lambda$ and $\mathcal{F}$ are as in Section~\ref{EtaleSheafFEC}, then
    $$
    \mathcal{F}_{\overline{\eta}}\cong V_\ell(E_\lambda)^\vee,
    $$
    where $E_\lambda$ is viewed as an elliptic curve over $\F_p(\lambda)$ and where the isomorphism is an isomorphism of representations of the Galois group $\Gal(\F_p(\lambda)^{\sep}/\F_p(\lambda)).$
\end{lemma}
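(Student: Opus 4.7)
The plan is to apply the proper base change theorem to identify $\mathcal{F}_{\overline{\eta}}$ with compactly supported \'etale cohomology of the geometric generic fiber, compare this to ordinary cohomology via the open--closed exact sequence, and then invoke the classical identification $H^1(E_{\overline{K}}, \Q_\ell) \cong V_\ell(E)^\vee$ for an elliptic curve $E$ over a field $K$.

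First, I would unwind the definition of $R^1\pi_!$ via compactification. Writing $\pi = \overline{\pi} \circ j$, where $\overline{\pi} \colon \overline{E}_\lambda \to \A_{\F_p}^1$ is the relative smooth projective model of the family (obtained by adjoining the section at infinity to each fiber) and $j$ is the corresponding open immersion, we have $R^1\pi_!\Q_\ell = R^1\overline{\pi}_*(j_!\Q_\ell)$ by definition. Applying the proper base change theorem (see Chapter VI Section 4 of \cite{ECMilneBook}) at the geometric generic point $\overline{\eta} = \Spec(\F_p(\lambda)^{\sep})$ then yields a $\Gal(\F_p(\lambda)^{\sep}/\F_p(\lambda))$-equivariant isomorphism
$$
\mathcal{F}_{\overline{\eta}} \cong H_c^1((E_\lambda)_{\overline{\eta}}, \Q_\ell).
$$

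Second, since $(E_\lambda)_{\overline{\eta}}$ differs from the geometric generic fiber $(\overline{E}_\lambda)_{\overline{\eta}}$ of the projective family by the removal of a single rational point (the section at infinity), the long exact sequence in compactly supported \'etale cohomology for the open--closed decomposition collapses in degree $1$ to produce a Galois-equivariant isomorphism
$$
H_c^1((E_\lambda)_{\overline{\eta}}, \Q_\ell) \cong H^1((\overline{E}_\lambda)_{\overline{\eta}}, \Q_\ell),
$$
since the contributions of the removed point appear only in degrees $0$ and $2$. Finally, I would invoke the standard identification (cf. Proposition III.7.3 of \cite{SilvermanAEC} and Remark V.1.11 of \cite{ECMilneBook}) which supplies a canonical Galois-equivariant isomorphism $H^1((\overline{E}_\lambda)_{\overline{\eta}}, \Q_\ell) \cong V_\ell(\overline{E}_\lambda)^\vee$; since $E_\lambda$ and its smooth compactification define the same elliptic curve over $\F_p(\lambda)$, the right-hand side equals $V_\ell(E_\lambda)^\vee$, yielding the lemma.

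The main obstacle is essentially bookkeeping: all three ingredients are standard facts from \'etale cohomology and the theory of Tate modules, and the only care required is to track Galois-equivariance through each step, particularly in the application of proper base change at the generic geometric point, where one must ensure the base change map is compatible with the Galois action on $\F_p(\lambda)^{\sep}$.
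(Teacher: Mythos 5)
Your proposal is correct and takes essentially the same route as the paper, which simply cites the two key inputs (the identification of the stalk of $R^1\pi_!\Q_\ell$ at $\overline{\eta}$ with the cohomology of the geometric generic fiber via base change, and the classical isomorphism $H^1(\overline{E}_{\overline{K}},\Q_\ell)\cong V_\ell(E)^\vee$ from the theory of abelian varieties) without spelling out the intermediate open--closed comparison that you make explicit. Your unpacking of the details, including the point that $H_c^1$ of the affine curve agrees with $H^1$ of its projective model since the removed point contributes nothing in middle degree, is the expected fleshing-out of those citations and is sound.
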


\begin{proof}
    This is Theorem 15.1 of \cite{MilneAVCollection} and Corollary 4.2 of \cite{ECMilneBook}.
\end{proof}

The following result of Paršin plays an essential role in showing that a generic pair of elliptic curves satisfies the conditions required to use Proposition~\ref{mixedMoments}.

\begin{theorem}[Corollary 1 of Theorem 2 of \cite{Parsin}]\label{Parsin}
    Let $\ell$ be a prime and let $K$ be a function field of transcendence degree $1$ over $\F_p$ where $p\neq 2,\ell.$ If $E_{1,\lambda}$ and $E_{2,\lambda}$ are elliptic curves over $K$ and $V_\ell(E_{1,\lambda})\cong V_\ell(E_{2,\lambda}),$ then $E_{1,\lambda}$ and $E_{2,\lambda}$ are isogenous over $K.$
\end{theorem}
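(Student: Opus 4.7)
The plan is to deduce this from the \emph{Tate isogeny theorem} for elliptic curves over $K$, which asserts that the natural map
$$\mathrm{Hom}_K(E_{1,\lambda}, E_{2,\lambda}) \otimes_\Z \Z_\ell \longrightarrow \mathrm{Hom}_{G_K}\bigl(T_\ell(E_{1,\lambda}), T_\ell(E_{2,\lambda})\bigr)$$
is an isomorphism, where $G_K := \Gal(K^{\sep}/K)$ and $T_\ell$ is the integral $\ell$-adic Tate module. Granted this, the hypothesized $G_K$-equivariant isomorphism $V_\ell(E_{1,\lambda}) \cong V_\ell(E_{2,\lambda})$ rescales to a nonzero element of $\mathrm{Hom}_{G_K}(T_\ell(E_{1,\lambda}), T_\ell(E_{2,\lambda}))$. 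By surjectivity it comes from $\mathrm{Hom}_K(E_{1,\lambda}, E_{2,\lambda})\otimes\Z_\ell$, and clearing denominators yields a nonzero $K$-morphism of elliptic curves, which is necessarily an isogeny.

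The substance is therefore the Tate conjecture for elliptic curves over $K$. Following Tate's original argument as adapted by Zarhin and Par\v{s}in to the function-field setting, two inputs are required: (i) \emph{semisimplicity}, namely that $V_\ell(E)$ is a semisimple $G_K$-module for every elliptic curve $E/K$; and (ii) a \emph{Shafarevich-type finiteness} statement, namely that in any $K$-isogeny class of elliptic curves over $K$ there are only finitely many $K$-isomorphism classes realized by isogenies of $\ell$-power degree. Given (i) and (ii), Tate's argument proceeds by parametrizing elliptic curves isogenous to $E_1 \times E_2$ via $G_K$-stable $\Z_\ell$-lattices in $V_\ell(E_1) \oplus V_\ell(E_2)$; finiteness up to isomorphism in (ii) forces a nontrivial $\Z_\ell$-linear relation among such lattices, and semisimplicity in (i) lets one extract from this relation a genuine algebraic morphism, thereby producing enough elements in $\mathrm{Hom}_K(E_1,E_2)\otimes\Z_\ell$ to surject onto the right-hand side.

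For (i), if $j(E)$ is non-constant then the geometric monodromy of the sheaf $R^1\pi_!\Q_\ell$ on a suitable smooth model is $\SL_2$ (as in Lemma~\ref{FMonoChar}), whose standard representation is irreducible; combining this with semisimplicity of an appropriate Frobenius automorphism yields semisimplicity of the full $G_K$-action. When $j(E)$ is constant, $E$ descends after a finite extension to an elliptic curve over $\overline{\F_p}$, reducing (i) to Tate's classical theorem for elliptic curves over finite fields. The main obstacle is (ii): Par\v{s}in handles the finiteness of $\ell$-power isogeny classes by bounding the heights of $j$-invariants within an isogeny class using the Kodaira--Spencer map and the geometry of the moduli of elliptic curves, and then invoking the function-field analog of Northcott's theorem. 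This height-theoretic estimate is the technical heart of the argument and the step that cannot be bypassed by purely cohomological means.
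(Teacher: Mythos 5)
The paper does not prove this theorem; it cites it verbatim as Corollary~1 of Theorem~2 of Par\v{s}in's paper, so there is no internal proof against which to compare your attempt. Taken on its own terms, your outline is a sound account of how the Tate-type isogeny theorem over function fields is established: reduce to surjectivity of the natural map $\mathrm{Hom}_K(E_1,E_2)\otimes\Z_\ell \to \mathrm{Hom}_{G_K}(T_\ell E_1, T_\ell E_2)$, then run Tate's lattice argument whose two inputs are semisimplicity of the $\ell$-adic representation and a Shafarevich-type finiteness statement (which Par\v{s}in supplies through Kodaira--Spencer height bounds). Your deduction of the stated isogeny conclusion from surjectivity is also correct: a nonzero $G_K$-equivariant isomorphism of $V_\ell$'s rescales into $\mathrm{Hom}_{G_K}(T_\ell E_1, T_\ell E_2)$, and surjectivity forces $\mathrm{Hom}_K(E_1,E_2)\otimes\Z_\ell\neq 0$, hence $\mathrm{Hom}_K(E_1,E_2)\neq 0$, and any nonzero homomorphism of elliptic curves is an isogeny.

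The one step in your sketch that is garbled is the justification of semisimplicity. Once the geometric monodromy group of $V_\ell(E)$ is $\SL_2$ acting through its standard representation (Lemma~\ref{FMonoChar}), the module $V_\ell(E)$ is \emph{irreducible} as a $G_K^{\mathrm{geom}}$-module and hence a fortiori irreducible --- in particular trivially semisimple --- as a $G_K$-module; and $V_\ell(E_1)\oplus V_\ell(E_2)$ is then a sum of irreducibles and so semisimple. There is no need to invoke ``semisimplicity of an appropriate Frobenius automorphism,'' a phrase that suggests conflation with Tate's finite-field setting where a single Frobenius topologically generates the Galois group. Your treatment of the isotrivial case (descent to a finite subfield, where one falls back on Tate's original theorem) and your identification of the Shafarevich-type finiteness as the genuinely hard, height-theoretic ingredient are both on the mark.
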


Now, let $E_{1,\lambda},E_{2,\lambda},\mathcal{F}_1,\mathcal{F}_2$ be as in Section~\ref{Section5}. The following proposition describes the ``independence'' condition of a generic pair in terms of isogenies of elliptic curves.

\begin{proposition}\label{IsogenyCondition}
    Let $(E_{1,\lambda},E_{2,\lambda})$ be a generic pair. There exists $N\geq 1$ such that if $p\nmid N$ and $\mathcal{L}_p$ is a rank-$1$ $\ell$-adic sheaf on $\mathbb{P}_{\F_p}^1,$ then we have
    $$
    \mathcal{F}_{1,p}\not\cong_{\geom}\mathcal{F}_{2,p}\otimes\mathcal{L}_p.
    $$
\end{proposition}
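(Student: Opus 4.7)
The strategy will be a contradiction argument in three stages. Assume there is a rank-$1$ $\ell$-adic sheaf $\mathcal{L}_p$ on $\mathbb{P}^1_{\F_p}$ with $\mathcal{F}_{1,p}\cong_{\geom}\mathcal{F}_{2,p}\otimes\mathcal{L}_p$. First I will rule out the twist by showing that $\mathcal{L}_p$ must be geometrically trivial; second, I will invoke Par\v sin to produce an $\overline{\F_p}(\lambda)$-isogeny between $E_{1,\lambda}$ and $E_{2,\lambda}$; third, I will descend this isogeny to $\F_p(\lambda)$, possibly after a quadratic twist, and contradict condition (3) of a generic pair.

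For the first stage, taking determinants of the assumed isomorphism, and using that the geometric monodromy groups of $\mathcal{F}_{1,p}$ and $\mathcal{F}_{2,p}$ are both $\SL_2$ (Lemma~\ref{FMonoChar}), forces $\mathcal{L}_p^{\otimes 2}$ to be geometrically trivial; since $p$ is odd once $N$ is taken to be divisible by $2$, this means $\mathcal{L}_p$ corresponds geometrically to a tame quadratic character. I then plan to analyze the local inertia action at each $\lambda_0\in\A^1_{\overline{\F_p}}$. Condition (2) of a generic pair guarantees good or multiplicative reduction for both $E_{1,\lambda_0}$ and $E_{2,\lambda_0}$, so by Lemma~\ref{InertiaAction} the $I_{\lambda_0}$-invariant subspaces of $\mathcal{F}_{i,p,\overline{\eta}}$ have positive dimension. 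If $\mathcal{L}_p|_{I_{\lambda_0}}$ were nontrivial, then $(\mathcal{F}_{2,p}\otimes\mathcal{L}_p)^{I_{\lambda_0}}$ would vanish --- trivially in the good-reduction case and via Lemma~\ref{inertGeoConst} in the multiplicative case --- contradicting $\dim\mathcal{F}_{1,p,\overline{\eta}}^{I_{\lambda_0}}\geq 1$. Hence $\mathcal{L}_p$ is unramified at every finite point of $\A^1$, and the tame product formula on $\mathbb{P}^1$ then forces it to be unramified at $\infty$ as well. A tame rank-$1$ sheaf unramified everywhere on $\mathbb{P}^1_{\overline{\F_p}}$ is geometrically trivial, so $\mathcal{F}_{1,p}\cong_{\geom}\mathcal{F}_{2,p}$.

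The second stage is immediate: Lemma~\ref{TateSheaf} translates this into an isomorphism $V_\ell(E_{1,\lambda})\cong V_\ell(E_{2,\lambda})$ of $\operatorname{Gal}(\overline{\F_p}(\lambda)^{\sep}/\overline{\F_p}(\lambda))$-modules, and Theorem~\ref{Parsin} then produces an isogeny $\phi:E_{1,\lambda}\to E_{2,\lambda}$ defined over $\overline{\F_p}(\lambda)$.

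The third stage is where I expect the main obstacle, since condition (3) only excludes isogenies over the smaller field $\F_p(\lambda)$. The isogeny $\phi$ is defined over some finite extension $\F_{p^r}(\lambda)$, and the key input is that $\operatorname{End}_{\overline{\F_p}(\lambda)}(E_{i,\lambda})=\Z$: this follows from the nonconstancy of the $j$-invariants together with the $\SL_2$ geometric monodromy, since commutants of $\SL_2$ acting on the Tate module are scalars. Therefore $\operatorname{Hom}_{\overline{\F_p}(\lambda)}(E_{1,\lambda},E_{2,\lambda})$ is free of rank one over $\Z$, and the Frobenius $\sigma\in\operatorname{Gal}(\F_{p^r}/\F_p)$ acts on a generator by some integer of finite order, namely by $\pm 1$ (both $E_{i,\lambda}$ being $\sigma$-stable). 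If Frobenius acts by $+1$, then $\phi$ descends to $\F_p(\lambda)$, contradicting condition (3) with $d=1$. If Frobenius acts by $-1$, fix a non-square $d\in\F_p^\times$ and the $\F_{p^2}(\lambda)$-isomorphism $\iota:E_{2,\lambda,d}\to E_{2,\lambda}$ given by the scaling $y\mapsto\sqrt{d}\,y$, which satisfies $\iota^\sigma=-\iota$. Then $\psi:=\iota^{-1}\circ\phi:E_{1,\lambda}\to E_{2,\lambda,d}$ satisfies $\psi^\sigma=\psi$ and therefore descends to $\F_p(\lambda)$, again contradicting condition (3). Throughout, I will take $N$ to be a common multiple of $2$, $\ell$, the integers provided by conditions (2)--(3) of a generic pair, and the integers provided by Lemma~\ref{FMonoChar}.
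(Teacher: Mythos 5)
Your first stage is essentially the paper's Lemma~\ref{geomConstant}: the local inertia analysis at finite points via Lemma~\ref{inertGeoConst} (good reduction gives trivial $I_{\lambda_0}$-action, multiplicative gives unipotent, so a nontrivial local twist kills all invariants), followed by triviality at infinity. Your determinant trick and the tame product formula are a small stylistic variation on the paper's use of Lemma~\ref{FMonoChar}(2) together with the triviality of the tame $\pi_1$ of $\A^1_{\overline{\F_p}}$.

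Your stages two and three, however, genuinely diverge from the paper. The paper proves Lemma~\ref{geomConstantQuad}: from geometric constancy of $\mathcal{L}_p$ it deduces that $\mathcal{L}_p$ corresponds to a character $\chi$ of $\Gal(\overline{\F_p}/\F_p)$, uses weight-$0$ purity plus reality of the traces $a_{i,p}(\lambda)/\sqrt{p}$ to force $\chi$ trivial or quadratic, and then via quasi-orthogonality identifies $\mathcal{F}_{2,p}\otimes\mathcal{L}_p$ \emph{arithmetically} with $\mathcal{F}_{2,d,p}$ for an explicit $d$. Only then does it invoke Theorem~\ref{Parsin} over $K=\F_p(\lambda)$, exactly the setting that theorem covers. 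You instead invoke Paršin directly over $\overline{\F_p}(\lambda)$ and then descend the resulting isogeny by an explicit $\pm 1$ Galois action on the rank-one $\Z$-module $\operatorname{Hom}_{\overline{\F_p}(\lambda)}(E_{1,\lambda},E_{2,\lambda})$. Your descent argument (with the $\iota^\sigma=-\iota$ computation for the twist by $\sqrt{d}$) is correct and is in fact the ``geometric shadow'' of the paper's trivial-or-quadratic dichotomy.

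The gap is the Paršin step. Theorem~\ref{Parsin} as cited in the paper is stated for $K$ a function field of transcendence degree one \emph{over the finite field} $\F_p$; it does not directly apply to $\overline{\F_p}(\lambda)$. To invoke it you first need the isomorphism $V_\ell(E_{1,\lambda})\cong V_\ell(E_{2,\lambda})$ to hold as representations of $\Gal(K^{\sep}/\F_{p^r}(\lambda))$ for some finite $r$, not merely of $G^{\geom}$. The obstruction is the character $\psi$ of $\Gal(\overline{\F_p}/\F_p)$ with $\mathcal{F}_{1,p}\cong\mathcal{F}_{2,p}\otimes\psi$ arithmetically, which a priori need not have finite order. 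Once you know $\psi$ is of finite order $n$ you may pass to $r=n$ and apply Paršin over $\F_{p^r}(\lambda)$, after which your Hom-module descent works verbatim; but to see $\psi$ has finite order you need an argument --- for instance, self-duality of both $\mathcal{F}_{i,p}$ (Lemma~\ref{FMonoChar}(3)) gives $\mathcal{F}_{2,p}\otimes\psi^2\cong\mathcal{F}_{2,p}$, whence $\psi$ has order dividing $4$, or the paper's weight/reality argument, which pins down $\psi^2=1$. Either way this is precisely the content of Lemma~\ref{geomConstantQuad} that you were trying to bypass, so you cannot skip it; you should insert it before the Paršin step.
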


In order to prove Proposition~\ref{IsogenyCondition}, we require the following two lemmas.

\begin{lemma}\label{geomConstant}
    Assume $E_{1,\lambda}$ and $E_{2,\lambda}$ have either good or multiplicative reduction everywhere. There exists $N\geq 1$ such that if $p\nmid N$ is a prime and $\mathcal{L}_p$ is a rank-$1$ $\ell$-adic sheaf on $\mathbb{P}_{\F_p}^1$ with
    $$
    \mathcal{F}_{1,p}\cong_{\geom}\mathcal{F}_{2,p}\otimes\mathcal{L}_p,
    $$
    then $\mathcal{L}_p$ is geometrically constant.
\end{lemma}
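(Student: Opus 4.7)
The plan is to localize the hypothetical isomorphism $\mathcal{F}_{1,p}\cong_{\geom}\mathcal{F}_{2,p}\otimes\mathcal{L}_p$ at every closed point $\lambda_0\in\mathbb{P}^1_{\overline{\F_p}}$, exploit the explicit inertia descriptions for $\mathcal{F}_{i,p}$, and then invoke triviality of the \'etale fundamental group of $\mathbb{P}^1_{\overline{\F_p}}$. I would fix $N\geq 1$ large enough that for $p\nmid N$, Lemmas~\ref{FLisse} and~\ref{FMonoChar} apply simultaneously to both $\mathcal{F}_{1,p}$ and $\mathcal{F}_{2,p}$ and $p\neq 2,\ell$.

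A quick normalization comes from taking geometric determinants of both sides of the assumed isomorphism: by Lemma~\ref{FMonoChar}(1), each $\det\mathcal{F}_{i,p}$ is geometrically trivial, so $\mathcal{L}_p^{\otimes 2}$ is geometrically trivial. Hence $\mathcal{L}_p$ corresponds to a character of $G^{\geom}$ of order dividing $2$, and in particular is tamely ramified everywhere since $p\neq 2$.

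The core step is a local analysis at each closed $\lambda_0\in\mathbb{P}^1_{\overline{\F_p}}$. Let $\chi$ be the character by which $\mathcal{L}_p$ acts on $I_{\lambda_0}$. If both $E_{i,\lambda}$ have good reduction at $\lambda_0$, smooth base change gives each $\mathcal{F}_{i,p}$ unramified at $\lambda_0$, so $\chi$ is trivial on $I_{\lambda_0}$. If both have multiplicative reduction at $\lambda_0$, Lemma~\ref{InertiaAction} says that inertia acts on each $\mathcal{F}_{i,p}$ as a nontrivial unipotent of trace $2$; the inertia action on $\mathcal{F}_{2,p}\otimes\mathcal{L}_p$ then has trace $2\chi(\gamma)$, and matching traces with the action on $\mathcal{F}_{1,p}$ forces $\chi\equiv 1$ on $I_{\lambda_0}$. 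Finally, if exactly one of them (say $E_{1,\lambda}$) has multiplicative reduction at $\lambda_0$ while the other has good, the same trace identity still gives $\chi\equiv 1$, but then $\mathcal{F}_{2,p}\otimes\mathcal{L}_p$ is unramified at $\lambda_0$ while $\mathcal{F}_{1,p}$ is genuinely ramified (by the nontriviality of $t_\ell$ in Lemma~\ref{InertiaAction}), contradicting the geometric isomorphism; so this mixed case does not arise under the standing assumption.

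By hypothesis, every point of $\mathbb{P}^1_{\overline{\F_p}}$ (including $\infty$) is of good or multiplicative reduction for both families, so combining the three cases shows that $\mathcal{L}_p$ is unramified at every closed point and therefore lisse on all of $\mathbb{P}^1_{\overline{\F_p}}$. Since the \'etale fundamental group of $\mathbb{P}^1$ over an algebraically closed field is trivial, $\mathcal{L}_p$ must be geometrically constant. I expect the main obstacle to be the careful bookkeeping at points of bad reduction -- in particular, correctly ruling out the mixed case using the surjectivity of $t_\ell$ in Lemma~\ref{InertiaAction} -- while the remainder is an assembly of previously established local machinery.
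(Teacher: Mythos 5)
Your overall strategy---local analysis of inertia to show $\mathcal{L}_p$ is unramified, then conclude via vanishing of a fundamental group---is the same as the paper's, and your determinant computation and trace-matching are both valid. But there is one soft point around $\infty$. The hypothesis ``good or multiplicative reduction everywhere'' is, in the paper's usage (compare condition (2) of the generic pair definition and the way Lemma~\ref{geomConstant} is invoked in Proposition~\ref{IsogenyCondition}), a statement about the finite places $\lambda_0\in\overline{\F_p}$; it does \emph{not} constrain the fiber at $\infty$, which may well be additive. So your claim that the local trace analysis applies at $\infty$, hence that $\mathcal{L}_p$ is lisse on \emph{all} of $\mathbb{P}^1_{\overline{\F_p}}$, is not justified by the hypotheses, and your final appeal to $\pi_1^{\mathrm{\acute et}}(\mathbb{P}^1_{\overline{\F_p}})=\{1\}$ therefore overreaches.

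The paper sidesteps this by only establishing that $\mathcal{L}_p$ is lisse on $\A^1_{\overline{\F_p}}$ and \emph{tamely} ramified at $\infty$ (the latter following because each $\mathcal{F}_{i,p}$ is tame at $\infty$ by Lemma~\ref{FMonoChar}(2)), and then citing the triviality of the tame fundamental group of $\A^1_{\overline{\F_p}}$ (Milne's Example 5.2(f)). Fortunately, your determinant observation already supplies exactly what is needed: since $\mathcal{L}_p^{\otimes 2}$ is geometrically trivial and $p\neq 2$, the character has prime-to-$p$ order and is therefore tame at $\infty$. So replace the last step: you have $\mathcal{L}_p$ lisse on $\A^1_{\overline{\F_p}}$ (from the local analysis at finite points) and tame at $\infty$ (from the determinant argument), and then invoke triviality of the tame $\pi_1$ of $\A^1_{\overline{\F_p}}$ rather than of $\pi_1(\mathbb{P}^1)$. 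With that adjustment the proof is complete. As a side remark, your trace-matching at multiplicative fibers is a correct variant of the paper's route via Lemma~\ref{inertGeoConst} (which compares inertia invariants: a nontrivial twist kills them, while $\mathcal{F}_{1,p}$ always has some); either works, and the determinant shortcut for tameness is a clean addition not present in the paper's proof.
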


\begin{proof}
    First, by Lemma~\ref{InertiaAction} and our assumption of good or multiplicative reduction, it is clear that $\mathcal{F}_{1,p}$ and $\mathcal{F}_{2,p}$ must have the same points of ramification. By Lemma~\ref{inertGeoConst}, we have that $\mathcal{L}_p$ must be lisse on $\A_{\F_p}^1.$ Furthermore, by Lemma~\ref{FMonoChar}, there exists $N\geq 1$ such that if $p\nmid N$, then $\mathcal{F}_{1,p}$ and $\mathcal{F}_{2,p}$ are tamely ramified at $\infty$ and therefore, $\mathcal{L}_p$ is tamely ramified at $\infty.$ Therefore, $\mathcal{L}_p$ is geometrically constant (see Example 5.2 (f) of Chapter I of \cite{ECMilneBook}).
\end{proof}

\begin{lemma}\label{geomConstantQuad}
    Assume that $E_{1,\lambda}$ and $E_{2,\lambda}$ have either good or multiplicative reduction everywhere. There exists $N\geq 1$ such that if $p\nmid N$ is a prime and $\mathcal{L}_p$ is a rank-$1$ $\ell$-adic sheaf on $\mathbb{P}_{\F_p}^1$ with
    $$
    \mathcal{F}_{1,p}\cong_{\geom}\mathcal{F}_{2,p}\otimes\mathcal{L}_p,    
    $$
    then we have
    $$
    \mathcal{F}_{2,p}\otimes\mathcal{L}_p\cong\mathcal{F}_{2,d,p},
    $$
    where $d\in\F_p^\times, \pi_{2,d}$ is the projection map on the quadratic twist $E_{2,\lambda,d}$ and $\mathcal{F}_{2,d,p}=R^1\pi_{2,d,!}\Q_\ell.$
\end{lemma}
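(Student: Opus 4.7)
The plan is to combine the geometric triviality of $\mathcal{L}_p$ granted by Lemma~\ref{geomConstant} with a Schur-type rigidity argument exploiting the $\SL_2$ monodromy of $\mathcal{F}_{1,p}$ and $\mathcal{F}_{2,p}$ from Lemma~\ref{FMonoChar}, and then match the resulting twisting character with the Kummer character of an explicit quadratic twist.

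First, by Lemma~\ref{geomConstant}, for $p$ outside a fixed finite set, $\mathcal{L}_p$ is geometrically constant, so $\mathcal{L}_p|_{G^{\geom}}$ is trivial and a trivialization of $\mathcal{L}_{p,\overline{\eta}}$ turns the given geometric isomorphism into a $G^{\geom}$-equivariant vector-space isomorphism $\varphi: \mathcal{F}_{1,p,\overline{\eta}} \to \mathcal{F}_{2,p,\overline{\eta}}$. Since the $G^{\geom}$-representation on $\mathcal{F}_{2,p,\overline{\eta}}$ is irreducible (factoring through the standard representation of the geometric monodromy group $\SL_2$), the space of such $\varphi$ is one-dimensional by Schur's lemma.

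Next, I would upgrade $\varphi$ to an arithmetic isomorphism up to a character twist. For any $g\in G$, a direct calculation using normality of $G^{\geom}$ in $G$ shows that the operator $T(g):=\varphi\,\rho_{\mathcal{F}_{1,p}}(g)\,\varphi^{-1}\,\rho_{\mathcal{F}_{2,p}}(g)^{-1}$ commutes with every $\rho_{\mathcal{F}_{2,p}}(h)$ for $h\in G^{\geom}$. Schur's lemma forces $T(g)=\chi(g)\cdot I$ for some scalar $\chi(g)\in\overline{\Q_\ell}^\times$, and a routine cocycle calculation shows that $\chi$ is a multiplicative character on $G$ that is trivial on $G^{\geom}$. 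Hence $\chi$ corresponds to a geometrically constant rank-$1$ sheaf on $\mathbb{P}^1_{\F_p}$ pulled back from $\Spec\F_p$, and $\mathcal{F}_{1,p}\cong\mathcal{F}_{2,p}\otimes\chi$ as arithmetic $G$-representations.

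Taking determinants and using that $\det\mathcal{F}_{i,p}$ is arithmetically trivial (the Tate twist by $\frac{1}{2}$ on the rank-$2$ first cohomology cancels the $\Q_\ell(-1)$ coming from the Weil pairing), we obtain $\chi^2=1$, so $\chi$ is either trivial or the unique non-trivial quadratic character of $\Gal(\overline{\F_p}/\F_p)$. In either case $\chi=\chi_d$ for some $d\in\F_p^\times$, where $\chi_d$ is the Kummer character associated to the extension $\F_p(\sqrt{d})/\F_p$. The standard identification of the Tate module of a quadratic twist, combined with Lemma~\ref{TateSheaf}, gives $\mathcal{F}_{2,d,p}\cong\mathcal{F}_{2,p}\otimes\chi_d$. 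Chaining the isomorphisms $\mathcal{F}_{2,p}\otimes\mathcal{L}_p\cong_{\geom}\mathcal{F}_{1,p}\cong\mathcal{F}_{2,p}\otimes\chi_d\cong\mathcal{F}_{2,d,p}$ then yields the claim.

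The main obstacle in this plan is the Schur-upgrade step: verifying that the pointwise scalars $\chi(g)$ assemble into a genuine multiplicative character on $G$ requires careful bookkeeping of the conjugation identities, and the whole argument relies crucially on the irreducibility granted by the $\SL_2$ monodromy computation of Lemma~\ref{FMonoChar}. Once this is in place, the determinant reduction and the identification with the quadratic twist are routine.
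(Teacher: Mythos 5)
Your argument is correct, and it takes a somewhat different route than the paper's proof of this lemma. Both proofs begin by invoking Lemma~\ref{geomConstant} to make $\mathcal{L}_p$ geometrically constant, but then diverge. The paper pins down the character $\chi$ corresponding to $\mathcal{L}_p$ by observing that the Frobenius traces of $\mathcal{F}_{1,p}$ and $\mathcal{F}_{2,p}$ are real for every embedding $\overline{\Q_\ell}\hookrightarrow\C$ and both sheaves are pure of weight $0$, forcing $\chi$ to be trivial or quadratic; it then compares Frobenius traces of $\mathcal{F}_{2,p}\otimes\mathcal{L}_p$ and $\mathcal{F}_{2,d,p}$ for a suitably chosen $d$ and invokes the quasi-orthogonality relations for geometrically irreducible sheaves (Theorem 5.2 of \cite{AppliedladicCohomology}) to conclude the isomorphism. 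You instead make explicit what the paper leaves implicit: you run the Schur rigidity argument, using normality of $G^{\geom}$ in $G$ and irreducibility of the $\SL_2$-monodromy, to upgrade the geometric isomorphism to an \emph{arithmetic} one up to a character of $\Gal(\overline{\F_p}/\F_p)$. You then take determinants (here one should note that the paper's conventions are slightly inconsistent about the half-Tate twist on $\mathcal{F}_{2,d,p}$; what you actually need, and what is true, is that $\det\mathcal{F}_{1,p}$ and $\det\mathcal{F}_{2,p}$ agree arithmetically, both being $\Q_\ell$ with the half-twist or $\Q_\ell(-1)$ without) to force $\chi^2=1$, and you identify the resulting quadratic character with that of a quadratic twist via Lemma~\ref{TateSheaf}. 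The trade-off: the paper's route is trace-theoretic and leans on quasi-orthogonality as a black box, while yours is purely representation-theoretic and self-contained given the $\SL_2$-monodromy; your determinant step is also cleaner than the paper's somewhat terse reality-of-traces argument. One small caveat worth flagging: the hypothesis only fixes $\mathcal{L}_p$ geometrically, so the final chain $\mathcal{F}_{2,p}\otimes\mathcal{L}_p\cong_{\geom}\mathcal{F}_{1,p}\cong\mathcal{F}_{2,p}\otimes\chi_d\cong\mathcal{F}_{2,d,p}$ gives a \emph{geometric} isomorphism as the conclusion of the lemma, with the stronger arithmetic statement $\mathcal{F}_{1,p}\cong\mathcal{F}_{2,d,p}$ (which is what Proposition~\ref{IsogenyCondition} actually uses with Paršin's theorem) following from your intermediate step.
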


\begin{proof}
    If $\mathcal{L}_p$ is geometrically constant, then it follows by Proposition 2.8.2 of \cite{KowalskiRep} and (\ref{ArithGeoExactSequence}) that for $\lambda\in\A_{\F_p}^1,$ we have
    $$
    \tr(\Frob_\lambda|\mathcal{L}_p)=\chi(\Frob_p^{\deg \lambda}),
    $$
    where $\chi$ is a character on $\Gal(\overline{\F_p}/\F_p).$ By observing that the traces of Frobenius $\mathcal{F}_{1,p}$ and $\mathcal{F}_{2,p}$ are $\R$-valued for all embeddings of $\overline{\Q}_\ell\to\C$ and by the purity of weight $0$ of the sheaves $\mathcal{F}_{1,p},\mathcal{F}_{2,p},$ we have that $\chi$ is either a trivial or a quadratic character. 
    
    Now, take $d'$ to be a nonsquare in $\F_p$ and consider $d=(d')^{\ord(\chi)}.$ By comparing the traces of Frobenius of $\mathcal{F}_{2,p}\otimes\mathcal{L}_p$ and $\mathcal{F}_{2,d,p},$  the quasi-orthogonality relations of irreducible sheaves (see Theorem 5.2 of \cite{AppliedladicCohomology}) imply that, up to enlarging $N,$ we have $\mathcal{F}_{2,p}\otimes\mathcal{L}_p\cong\mathcal{F}_{2,d,p}$ for all primes $p\nmid N.$
\end{proof}

\begin{proof}[Proof of Proposition~\ref{IsogenyCondition}]
    This immediately follows from Lemmas~\ref{TateSheaf}, \ref{geomConstant}, \ref{geomConstantQuad}, and Theorem~\ref{Parsin}.
\end{proof}

\section{Some Distributions}\label{Section7}

To obtain Corollaries~\ref{corollary1} and \ref{corollary2}, we combine Theorems~\ref{theorem1} and \ref{theorem2} with the method of moments. In this section, we compute the required probability density functions associated to a given sequence of moments or mixed moments.

For the mixed moments, the following lemma gives the probability density functions.

\begin{lemma}\label{mixedMomentsPDF}
    If $p$ is prime and $i=1,2,$ consider maps
    $$
    f_{i,p}:\F_p\to [-2,2].
    $$
    Suppose that for each $m,n\geq 0$ we have that
    $$
    \lim\limits_{p\to\infty}\frac{1}{p}\sum\limits_{\lambda\in\F_p}f_{1,p}(\lambda)^mf_{2,p}(\lambda)^n =\begin{cases}
        C(m_1)C(n_1) &\ \ \ \text{ if } m=2m_1\text{ and }n=2n_1\text{ are even}\\
        0 &\ \ \ \text{ otherwise}.
    \end{cases}
    $$
    If $-2\leq a_i,b_i\leq 2,$ then we have
    $$
    \lim\limits_{p\to\infty}\frac{\#\{\lambda\in\F_p: (f_{1,p}(\lambda),f_{2,p}(\lambda))\in[a_1,b_1]\times[a_2,b_2]\}}{p}=\frac{1}{4\pi^2}\int_{a_1}^{b_1}\int_{a_2}^{b_2}\sqrt{4-x_1^2}\cdot\sqrt{4-x_2^2} \, dx_1dx_2.
    $$
\end{lemma}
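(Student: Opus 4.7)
The plan is to apply the method of moments on the compact square $[-2,2]\times[-2,2]$. For each prime $p$, let $\mu_p$ denote the empirical probability measure
$$
\mu_p := \frac{1}{p}\sum_{\lambda\in\F_p} \delta_{(f_{1,p}(\lambda),f_{2,p}(\lambda))}
$$
on $[-2,2]^2$. The hypothesis states precisely that for every pair $m,n\geq 0$,
$$
\lim_{p\to\infty}\int x_1^m x_2^n\, d\mu_p(x_1,x_2) = \mu_{m,n},
$$
where $\mu_{m,n}=C(m_1)C(n_1)$ when $m=2m_1,\,n=2n_1$, and $\mu_{m,n}=0$ otherwise.

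First I would verify that the target density $d\nu(x_1,x_2) := \frac{1}{4\pi^2}\sqrt{4-x_1^2}\sqrt{4-x_2^2}\,dx_1\,dx_2$ on $[-2,2]^2$ has these same mixed moments. By Fubini, the mixed moment factors as a product of moments of the semicircle density $\frac{1}{2\pi}\sqrt{4-x^2}$ on $[-2,2]$. The substitution $x=2\cos\theta$ reduces the one-dimensional moment to a standard Beta integral, which vanishes for odd exponents and equals $C(k)$ for exponent $2k$. Hence the mixed moments of $\nu$ are exactly $\mu_{m,n}$.

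Next I would invoke the method of moments. Since every $\mu_p$ is supported on the compact set $[-2,2]^2$, the family $\{\mu_p\}$ is automatically tight, so by Prokhorov's theorem every subsequence has a further weakly convergent subsequence with limit some probability measure $\mu$ on $[-2,2]^2$. Because polynomials in $x_1,x_2$ are bounded continuous on $[-2,2]^2$, weak convergence along that sub-subsequence forces the mixed moments of $\mu$ to equal $\mu_{m,n}$. Therefore $\mu$ and $\nu$ have identical mixed moments; since both are supported on the compact square $[-2,2]^2$ and polynomials are dense in $C([-2,2]^2)$ by the Stone--Weierstrass theorem, each of these measures is uniquely determined by its moments, so $\mu=\nu$. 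As every weak limit point equals $\nu$, the full sequence $\mu_p$ converges weakly to $\nu$.

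Finally, to translate weak convergence into the statement of the lemma, I would note that the boundary of any rectangle $[a_1,b_1]\times[a_2,b_2]\subset[-2,2]^2$ is a finite union of line segments, hence has Lebesgue measure zero and therefore $\nu$-measure zero. Consequently the portmanteau theorem gives
$$
\frac{\#\{\lambda\in\F_p:(f_{1,p}(\lambda),f_{2,p}(\lambda))\in[a_1,b_1]\times[a_2,b_2]\}}{p}=\mu_p([a_1,b_1]\times[a_2,b_2])\longrightarrow \nu([a_1,b_1]\times[a_2,b_2]),
$$
which is precisely the claimed integral. The only part requiring any care is the verification that the Catalan numbers really are the semicircle moments and the observation that moment-determinacy is automatic on a compact domain; neither step is a genuine obstacle, so the lemma is essentially a clean application of method-of-moments for joint distributions.
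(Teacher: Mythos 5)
Your proof is correct and follows essentially the same route as the paper: both invoke the method of moments and reduce the lemma to verifying that the product semicircle density has the stated mixed Catalan-number moments. You simply spell out in full the steps (tightness by compact support, Prokhorov, Stone--Weierstrass moment-determinacy, portmanteau) that the paper delegates to Billingsley's Theorems 30.5--30.6, which is fine.
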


\begin{proof}[Sketch of Proof]
    
    The method of moments (see Exercises 30.5 and 30.6 of \cite{Billingsley}) states that in order to compute the limiting distribution, under very general settings, it suffices to show that the limiting moments coincide with the moments of the claimed distribution. In our case, this is precisely the statement that if $m,n\geq 0,$ then we have that 
    $$
    \int_{-2}^2\int_{-2}^2x_1^mx_2^n\sqrt{4-x_1^2}\sqrt{4-x_2^2}dx_1dx_2=\begin{cases}
        C(m_1)C(n_1) &\ \ \ \text{ if } m=2m_1\text{ and }n=2n_1\text{ are even}\\
        0 &\ \ \ \text{ otherwise,}
    \end{cases}
    $$
    which is straightforward.
\end{proof}

We now determine the probability density function for the moments arising in Theorem~\ref{theorem1}. To this end, we recall the definition of the Meijer $G$-function (see Appendix of \cite{IntTransforms}).

\begin{definition}\label{MeijerGDef}
    Let $0\leq m\leq q,0\leq n\leq p$ be integers and suppose $a_1,\ldots,a_p,b_1,\ldots,b_q$ are complex numbers such that $a_k-b_j\not\in\Z_{>0}$ for all $1\leq k\leq n$ and $1\leq j\leq m.$ Furthermore, suppose $\Re(\nu)<-1,$ where
    $$
    \nu:=\sum\limits_{j=1}^q b_j-\sum\limits_{j=1}^pa_p.
    $$
    In this notation if $z\in\C^\times$ and $|z|\leq 1,$ the Meijer $G$-function is given as
    $$
    G_{p,q}^{m,n} \left[\begin{matrix}a_1 & \ldots & a_p \\  b_1 & \ldots & b_q\end{matrix} \; \bigg| \; z \right]:=\frac{1}{2\pi i}\bigintsss_L \frac{\prod\limits_{j=1}^m\Gamma(b_j-s)\prod\limits_{j=1}^n\Gamma(1-a_j+s)}{\prod\limits_{j=m+1}^q\Gamma(1-b_j+s)\prod\limits_{j=n+1}^p\Gamma(a_j-s)}z^sds,
    $$
    where $\Gamma(\cdot)$ denotes Euler's Gamma function, and where $L$ is a loop beginning and ending at $-\infty$ that encircles all poles of $\Gamma(1-a_k+s)$ for $1\leq k\leq n$ exactly once in the positive direction and does not encircle any poles of $\Gamma(b_j-s)$ for $1\leq j\leq m.$
\end{definition}

In order to compute integrals involving Meijer $G$-functions, we require the following key transformation and integral identities.

\begin{lemma}\label{MeijerGProps}
    The following are true.
    \begin{enumerate}
    \item If $\rho\in \C,$ then we have
    $$
    z^{\rho} \cdot \, G_{p,q}^{m,n} \left[\begin{matrix} a_1 & \ldots & a_p \\  b_{1} & \ldots & b_q\end{matrix} \; \bigg| \; z \right] = G_{p,q}^{m,n}\left[\begin{matrix} a_1+\rho & \ldots & a_p+\rho \\  b_1+\rho & \ldots & b_q+\rho\end{matrix} \; \bigg| \; z \right].
    $$
    \item Under general conditions (see 20.5 of \cite{IntTransforms}), if $\alpha,\beta,z\in\R,$ then we have
    $$
    \int_{0}^{1} x^{-\alpha}(1-x)^{\alpha-\beta-1} G_{p,q}^{m,n} \left[\begin{matrix} a_1 & \ldots & a_p \\  b_1 & \ldots & b_q\end{matrix} \; \bigg| \; zx \right] dx = \Gamma(\alpha-\beta)\cdot G_{p+1,q+1}^{m,n+1} \left[\begin{matrix} \alpha & a_{1} & \ldots & a_p \\  b_{1} & \ldots & b_q & \beta\end{matrix} \; \bigg| \; z \right].
    $$

    \end{enumerate}
\end{lemma}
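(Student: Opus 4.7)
The plan is to establish both identities by direct manipulation of the Mellin--Barnes contour integral in Definition~\ref{MeijerGDef}.

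For part (1), I would pull the factor $z^{\rho}$ inside the contour integral so that the integrand carries $z^{s+\rho}$, and then perform the change of variable $s \mapsto s-\rho$. This shifts the contour $L$ by $\rho$; under the ``general conditions'' that keep the contour separating the poles of $\Gamma(1-a_k+s)$ from those of $\Gamma(b_j-s)$, the deformation crosses no poles and the integral is unchanged. After renaming the variable of integration, the integrand is precisely the Mellin--Barnes integrand for $G_{p,q}^{m,n}$ with each $a_j$ and each $b_j$ replaced by $a_j+\rho$ and $b_j+\rho$, giving the scaling identity.

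For part (2), the plan is to substitute the integral representation of $G_{p,q}^{m,n}[\,\cdot\,;zx]$ into the $x$-integral, swap the order of integration by Fubini, and recognize the inner integral as a Beta function. Explicitly, the substitution produces
\begin{equation*}
\frac{1}{2\pi i}\int_L \frac{\prod_{j=1}^m\Gamma(b_j-s)\prod_{j=1}^n\Gamma(1-a_j+s)}{\prod_{j=m+1}^q\Gamma(1-b_j+s)\prod_{j=n+1}^p\Gamma(a_j-s)}\, z^s\left(\int_0^1 x^{s-\alpha}(1-x)^{\alpha-\beta-1}\,dx\right)ds.
\end{equation*}
The inner integral evaluates to $B(s-\alpha+1,\alpha-\beta)=\Gamma(1-\alpha+s)\Gamma(\alpha-\beta)/\Gamma(1-\beta+s)$, so pulling $\Gamma(\alpha-\beta)$ out front leaves an integrand that has gained one extra factor $\Gamma(1-\alpha+s)$ in the numerator and one extra factor $\Gamma(1-\beta+s)$ in the denominator. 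Comparing with Definition~\ref{MeijerGDef}, this matches the Mellin--Barnes integrand for $G_{p+1,q+1}^{m,n+1}$ with the parameter $\alpha$ inserted as the first entry of the top row (it then sits within the first $n+1$ positions, contributing $\Gamma(1-\alpha+s)$ to the numerator product) and $\beta$ appended at the end of the bottom row (landing in position $q+1$, contributing $\Gamma(1-\beta+s)$ to the denominator product). This yields (2).

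The main obstacle, and the reason for the ``general conditions'' qualifier in the statement, is justifying the Fubini interchange in (2) together with the contour deformation in (1). Both rely on the classical exponential decay of $|\Gamma(\sigma+it)|$ along vertical strips, combined with absolute convergence of the Beta integral, which requires $\mathrm{Re}(\alpha-\beta)>0$ and $\mathrm{Re}(s-\alpha+1)>0$ uniformly along $L$. Once these analytic hypotheses are arranged (and the poles of the new $\Gamma(1-\alpha+s)$ factor are absorbed into the enlarged set of ``type-$a$'' poles that $L$ encircles), both computations are standard and well-documented (see 20.5 of \cite{IntTransforms}), so I would simply cite the lemma rather than reproduce the full analytic verification.
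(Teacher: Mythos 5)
The paper does not actually prove this lemma: it simply cites \cite{IntTransforms} (Section 20.5 and the Appendix) for both facts. Your proposal supplies the standard Mellin--Barnes derivation one finds in such references, and it is correct. For (1), the substitution $s\mapsto s-\rho$ sends $\Gamma(b_j-s)\mapsto\Gamma((b_j+\rho)-s)$ and $\Gamma(1-a_j+s)\mapsto\Gamma(1-(a_j+\rho)+s)$, and the shifted contour $L+\rho$ is automatically the admissible contour for the shifted parameters (the poles it must encircle are precisely the original ones translated by $\rho$), so no deformation argument is really needed beyond the remark you make. For (2), the Fubini swap and the Beta evaluation $\int_0^1 x^{s-\alpha}(1-x)^{\alpha-\beta-1}\,dx = \Gamma(1-\alpha+s)\Gamma(\alpha-\beta)/\Gamma(1-\beta+s)$ correctly produce the extra numerator factor $\Gamma(1-\alpha+s)$ (which is what putting $\alpha$ in position $1$ of the top row with $n\mapsto n+1$ contributes) and the extra denominator factor $\Gamma(1-\beta+s)$ (from appending $\beta$ as $b_{q+1}$ with $m$ unchanged). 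Your note that the ``general conditions'' package the convergence hypotheses -- $\Re(\alpha-\beta)>0$, $\Re(s-\alpha+1)>0$ along $L$, and the decay of $\Gamma$ in vertical strips to justify Fubini -- is exactly the right place to invoke the reference rather than reproduce the analysis; since the lemma is only applied in Lemma~\ref{momentsPDF} with concrete parameters where these hold, this is fine.
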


We will also make use of the following elementary lemma which expresses products of Catalan numbers in terms of ratios of Gamma functions.

\begin{lemma}\label{CatalanGamma}
    If $m \geq 0,$ then we have
    $$
     C(m)C(m+1) = \frac{4 \cdot 16^{m}}{\pi} \frac{\Gamma\left(m+\frac{1}{2}\right)\Gamma\left(m+\frac{3}{2}\right)}{\Gamma(m+2)\Gamma(m+3)}.
    $$
\end{lemma}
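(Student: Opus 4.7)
The plan is to unwind the definition of the Catalan numbers in terms of factorials and then convert all factorials to Gamma functions, so that the Legendre duplication formula can be applied to reduce $(2m)!$ and $(2m+2)!$ to expressions involving half-integer Gamma values. Concretely, since $C(m)=\frac{(2m)!}{m!(m+1)!}$ and $C(m+1)=\frac{(2m+2)!}{(m+1)!(m+2)!}$, the product is
\[
C(m)C(m+1)=\frac{(2m)!\,(2m+2)!}{m!\,(m+1)!^{2}\,(m+2)!}=\frac{\Gamma(2m+1)\,\Gamma(2m+3)}{\Gamma(m+1)\,\Gamma(m+2)^{2}\,\Gamma(m+3)}.
\]

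The next step is to apply the Legendre duplication formula $\Gamma(2z)=\frac{2^{2z-1}}{\sqrt{\pi}}\Gamma(z)\Gamma\!\left(z+\tfrac{1}{2}\right)$ with $z=m+\tfrac{1}{2}$ and $z=m+\tfrac{3}{2}$, yielding
\[
\Gamma(2m+1)=\frac{2^{2m}}{\sqrt{\pi}}\,\Gamma(m+1)\,\Gamma\!\left(m+\tfrac{1}{2}\right),\qquad \Gamma(2m+3)=\frac{2^{2m+2}}{\sqrt{\pi}}\,\Gamma(m+2)\,\Gamma\!\left(m+\tfrac{3}{2}\right).
\]
Substituting these two identities and cancelling the factors $\Gamma(m+1)$ and one copy of $\Gamma(m+2)$ from numerator against denominator gives
\[
C(m)C(m+1)=\frac{2^{4m+2}}{\pi}\cdot\frac{\Gamma(m+\tfrac{1}{2})\,\Gamma(m+\tfrac{3}{2})}{\Gamma(m+2)\,\Gamma(m+3)}=\frac{4\cdot 16^{m}}{\pi}\cdot\frac{\Gamma(m+\tfrac{1}{2})\,\Gamma(m+\tfrac{3}{2})}{\Gamma(m+2)\,\Gamma(m+3)},
\]
which is the desired identity.

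There is essentially no obstacle here: the entire argument is a direct manipulation, and the only nontrivial ingredient is the well-known Legendre duplication formula. The only care required is bookkeeping of the powers of $2$, namely $2^{2m}\cdot 2^{2m+2}=2^{4m+2}=4\cdot 16^{m}$, and the correct cancellation of the Gamma factors; no case analysis (even $m=0$ is handled uniformly) is needed.
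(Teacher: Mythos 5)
Your proof is correct, and it follows essentially the same route as the paper: the paper establishes the single-factor identity $C(m)=\frac{4^{m}\Gamma(m+\frac12)}{\sqrt{\pi}\,\Gamma(m+2)}$ via the double-factorial expansion $\Gamma(m+\frac12)=\frac{(2m-1)!!}{2^m}\sqrt{\pi}$ (which is just the Legendre duplication formula in disguise), and then multiplies two copies, whereas you apply the duplication formula directly to the whole product. The two arguments are the same computation with slightly different packaging.
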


\begin{proof}

Using the property that $\Gamma(x+1)=x\Gamma(x)$ and that $\Gamma(\frac{1}{2})=\sqrt{\pi},$ we have that
$$
\Gamma\left(m+\frac{1}{2}\right) = \frac{(2m-1)!!}{2^m} \sqrt{\pi},
$$
where $(2k-1)!!:=(2k-1)(2k-3)\cdot\ldots\cdot 1.$ Therefore, we have that

\begin{equation*}
\begin{split}
\frac{4^{m}\Gamma\left(m+\frac{1}{2}\right)}{\Gamma(m+2)\sqrt{\pi}} &= \frac{2^{m}(2m-1)!!}{(m+1)!}\\
&= \frac{1}{m+1} \binom{2m}{m}\\
&= C(m)
\end{split}
\end{equation*}
and our claim follows immediately.
\end{proof}

Finally, we compute the probability density function associated to the moments that appear in Theorem~\ref{theorem1}.

\begin{lemma}\label{momentsPDF}
    If $p$ is prime, consider maps
    $$
    f_{p}:\F_p\to [-4,4].
    $$
    Suppose that for each $m\geq 0$ we have that
    $$
    \lim\limits_{p\to\infty}\frac{1}{p}\sum\limits_{\lambda\in\F_p}f_{p}(\lambda)^m=\begin{cases}
        C(m_1)C(m_1+1) &\ \ \ \text{ if } m=2m_1\text{ is even}\\
        0 &\ \ \ \text{ otherwise}.
    \end{cases}
    $$
    If $-4\leq a< b\leq 4,$ then we have
    $$
    \lim\limits_{p\to\infty}\frac{\#\left\{\lambda\in\F_p : f_p(\lambda)\in[a,b] \right\}}{p} = \int_a^b\frac{4}{\pi |t|} G_{2,2}^{2,0} \left[\begin{matrix} 2 & 3 \smallskip \\  \frac{1}{2} & \frac{3}{2}\end{matrix} \; \bigg| \; \frac{t^2}{16} \right]dt.
    $$
\end{lemma}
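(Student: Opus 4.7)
The plan is to apply the method of moments (Exercises~30.5--30.6 of \cite{Billingsley}) exactly as in the proof of Lemma~\ref{mixedMomentsPDF}. Since the candidate density
$$
\rho(t) \;:=\; \frac{4}{\pi|t|}\, G_{2,2}^{2,0}\!\left[\begin{matrix} 2 & 3 \\ \tfrac{1}{2} & \tfrac{3}{2}\end{matrix} \; \bigg| \; \frac{t^2}{16}\right]
$$
is supported on the compact interval $[-4,4]$, its moment problem is determinate, so it suffices to show that its moments agree with the hypothesized limiting moment sequence. The odd moments of $\rho$ vanish immediately because $\rho$ is an even function of $t$, matching the odd-order hypothesis.

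For the $(2m)$-th moment, a routine change of variables $u = t^2/16$ (together with evenness) reduces the computation to
$$
\int_{-4}^{4} t^{2m}\, \rho(t)\, dt \;=\; \frac{4 \cdot 16^{m}}{\pi}\, I(m),
\qquad
I(m) \;:=\; \int_0^1 u^{m-1}\, G_{2,2}^{2,0}\!\left[\begin{matrix} 2 & 3 \\ \tfrac{1}{2} & \tfrac{3}{2}\end{matrix} \; \bigg| \; u\right] du.
$$
Combined with Lemma~\ref{CatalanGamma}, matching the target moment $C(m)C(m+1)$ is therefore equivalent to proving
$$
I(m) \;=\; \frac{\Gamma(m + \tfrac{1}{2})\, \Gamma(m + \tfrac{3}{2})}{\Gamma(m+2)\, \Gamma(m+3)}.
$$

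The core of the proof is thus the evaluation of $I(m)$, which one recognizes as the Mellin transform at $s=m$ of the Meijer $G$-function above (the upper limit $1$ is immaterial because $G_{p,q}^{m,0}$ is supported on $[0,1]$). My preferred route is to substitute the Mellin--Barnes contour-integral definition of the Meijer $G$ into $I(m)$, interchange the order of integration, evaluate $\int_0^1 u^{m+s-1}\, du = 1/(m+s)$, and compute the resulting contour integral by a single residue, which lands on the claimed Gamma ratio. A more self-contained alternative is to combine Lemma~\ref{MeijerGProps}(1) with Lemma~\ref{MeijerGProps}(2): with the choice $\alpha = 1-m$, $\beta = -m$, and $z = 1$, the weight $(1-x)^{\alpha-\beta-1}$ becomes trivial and $\Gamma(\alpha-\beta)=1$, expressing $I(m)$ as a $G_{3,3}^{2,1}[\cdots|1]$, which can then be evaluated by a standard Meijer-$G$ reduction identity at $z=1$.

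The main obstacle is exactly this evaluation of $I(m)$: Lemma~\ref{MeijerGProps} on its own only transforms $I(m)$ into another Meijer $G$-function, so the actual proof must invoke either the Mellin--Barnes representation or an external reduction formula at $z = 1$; the remaining pieces (the substitution, Lemma~\ref{CatalanGamma}, and the method of moments) are then routine.
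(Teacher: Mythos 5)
Your outline reproduces the paper's proof: the paper likewise invokes the method of moments, kills the odd moments by parity, reduces the even moments to a Mellin-type integral of the Meijer $G$-function, and evaluates that integral by computing a single residue from the Mellin--Barnes representation before invoking Lemma~\ref{CatalanGamma}. Concretely, the paper carries out your route (b) by applying Lemma~\ref{MeijerGProps}(1)--(2) to rewrite the integral as a $G_{3,3}^{2,1}$ evaluated at $z=1$, and then finishes via your route (a): in the resulting Mellin--Barnes integrand the ratio $\Gamma(\tfrac{1}{2}+m_1+s)/\Gamma(\tfrac{3}{2}+m_1+s)=1/(\tfrac{1}{2}+m_1+s)$ leaves exactly one simple pole, whose residue is the Gamma-ratio that Lemma~\ref{CatalanGamma} converts to $C(m_1)C(m_1+1)$.
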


\begin{proof}
    By the method of moments (see Theorems 30.5 and 30.6 of \cite{Billingsley}), it suffices to show that for $m\geq 0,$ we have
    $$
    \int_{-4}^4 t^m\cdot\frac{4}{\pi |t|} G_{2,2}^{2,0} \left[\begin{matrix} 2 & 3 \smallskip \\  \frac{1}{2} &  \frac{3}{2}\end{matrix} \; \bigg| \; \frac{t^2}{16}\right]dt= \begin{cases}
        C(m_1)C(m_1+1) &\ \ \ \text{ if } m=2m_1\text{ is even}\\
        0 &\ \ \ \text{ otherwise}.
    \end{cases}.
    $$
    If $m$ is odd, this is clear since the integrand is odd. If $m=2m_1,$ the integrand is even and we rewrite the integral as
    $$
    \int_{-4}^4 t^m\cdot\frac{4}{\pi |t|} G_{2,2}^{2,0} \left[\begin{matrix} 2 & 3 \smallskip \\  \frac{1}{2} & \frac{3}{2}\end{matrix} \; \bigg| \; \frac{t^2}{16}\right]dt = \frac{2\cdot 16^{m_1}}{\pi}\int_{0}^{4} \left(\frac{t^2}{16}\right)^{m_1-\frac{1}{2}} \cdot G_{2,2}^{2,0}\left[\begin{matrix}2 & 3\smallskip \\  \frac{1}{2} & \frac{3}{2}\end{matrix} \; \bigg| \; \frac{t^2}{16} \right] \, dt
    $$
    Using (1) of Lemma~\ref{MeijerGProps} and the change-of-variables $\omega=\frac{t^2}{16},$ we have
    $$
     \int_{-4}^4 t^m\cdot\frac{4}{\pi |t|} G_{2,2}^{2,0} \left[\begin{matrix} 2 & 3 \smallskip \\  \frac{1}{2} & \frac{3}{2}\end{matrix} \; \bigg| \; \frac{t^2}{16}\right]dt =\frac{4 \cdot 16^{m_1}}{\pi} \int_{0}^{1} \omega^{-\frac{1}{2}} \cdot G_{2,2}^{2,0}\left[\begin{matrix}m_1 + \frac{3}{2} &  m_1 + \frac{5}{2}\smallskip \\  m_1 & m_1 + 1\end{matrix} \; \bigg| \; \omega \right] d \omega
    $$
    Computing the integral using (2) of Lemma~\ref{MeijerGProps}, we obtain
    $$
    \int_{-4}^4 t^m\cdot\frac{4}{\pi |t|} G_{2,2}^{2,0} \left[\begin{matrix} 2 & 3 \smallskip \\  \frac{1}{2} & \frac{3}{2}\end{matrix} \; \bigg| \; \frac{t^2}{16}\right]dt = \frac{4 \cdot 16^{m_1}}{\pi} G_{3,3}^{2,1} \left[\begin{matrix}\frac{1}{2} & m_1 + \frac{3}{2} & m_1 + \frac{5}{2}\smallskip \\  m_1 & m_1 + 1 & - \frac{1}{2}\end{matrix} \; \bigg| \; 1 \right]
    $$
    Applying (1) of Lemma~\ref{MeijerGProps} with $z=1$ and $\rho=-m_1,$ we obtain
    $$
    \int_{-4}^4 t^m\cdot\frac{4}{\pi |t|} G_{2,2}^{2,0} \left[\begin{matrix} 2 & 3 \smallskip \\  \frac{1}{2} & \frac{3}{2}\end{matrix} \; \bigg| \; \frac{t^2}{16}\right]dt = \frac{4 \cdot 16^{m_1}}{\pi} G_{3,3}^{2,1} \left[\begin{matrix}\frac{1}{2} - m_1 & \frac{3}{2} & \frac{5}{2}\smallskip \\  0 & 1 & -m_1 - \frac{1}{2}\end{matrix} \; \bigg| \; 1 \right]
    $$
    The residue theorem then implies that
\begin{equation*}
\begin{split}
G_{3,3}^{2,1} \left[\begin{matrix}\frac{1}{2} - m_1 & \frac{3}{2} & \frac{5}{2}\smallskip \\  0 & 1 & -m_1 - \frac{1}{2}\end{matrix} \; \bigg| \; 1 \right] &= \frac{1}{2 \pi i} \int_{L} \frac{\Gamma\left(\frac{1}{2}+m_1+s\right)}{\Gamma\left(\frac{3}{2}+m_1+s\right)} \cdot \frac{\Gamma(-s)\Gamma(1-s)}{\Gamma\left(\frac{3}{2}-s\right)\Gamma\left(\frac{5}{2}-s\right)} ds\\
&= \frac{1}{2 \pi i} \int_{L} \frac{1}{\frac{1}{2}+m_1+s} \cdot \frac{\Gamma(-s)\Gamma(1-s)}{\Gamma\left(\frac{3}{2}-s\right)\Gamma\left(\frac{5}{2}-s\right)} ds\\
&= \underset{s = -\frac{1}{2}-m_1}{\Res} \left(\frac{1}{\frac{1}{2}+m_1+s} \cdot \frac{\Gamma(-s)\Gamma(1-s)}{\Gamma\left(\frac{3}{2}-s\right)\Gamma\left(\frac{5}{2}-s\right)} \right)\\
&= \frac{\Gamma\left(\frac{1}{2}+m_1\right)\Gamma\left(\frac{3}{2}+m_1\right)}{\Gamma(2+m_1)\Gamma(3+m_1)}.
\end{split}
\end{equation*}
By Lemma~\ref{CatalanGamma}, our claim follows.
\end{proof}

\section{Proofs of the Main Results}\label{Section8}

\begin{proof}[Proof of Theorem~\ref{theorem1}]
    $E_\lambda^{\Leg}$ has multiplicative reduction at $\lambda=1$ whereas $E_{-\lambda}^{\Leg}$ has good reduction at $1.$ Therefore, Proposition~\ref{mixedMoments} applied to $(E_\lambda^{\Leg},E_{-\lambda}^{\Leg})$ combined with Lemma~\ref{CombMom} gives the claimed moments.
\end{proof}

\begin{proof}[Proof of Corollary~\ref{corollary1}]
    This follows from Theorem~\ref{theorem1} and Lemma~\ref{momentsPDF}.
\end{proof}

\begin{proof}[Proof of Theorem~\ref{theorem2}]
    Proposition~\ref{IsogenyCondition} shows that we can use Proposition~\ref{mixedMoments} whenever we have a generic pair, and the latter gives the moments.
\end{proof}

\begin{proof}[Proof of Corollary~\ref{corollary2}]
    This follows from Theorem~\ref{theorem2} and Lemma~\ref{mixedMomentsPDF}.
\end{proof}

\begin{proof}[Proof of Theorem~\ref{theorem3}]
This follows from Lemma~\ref{generalizedKoike} and Proposition~\ref{mixedMoments} with $n=0$ (or Section 3 of \cite{RangMoyen}). 
\end{proof}

\begin{proof}[Proof of Theorem~\ref{theorem4}]
    This follows from Lemma~\ref{hypergeometricClausen} and Proposition~\ref{3F2MomEC}.
\end{proof}

\bibliography{DraftReferences}
\bibliographystyle{amsalpha}

\end{document}